\newtheorem{thm}{Theorem}[section]
\newtheorem{rem}[thm]{Remark}
\newtheorem{lma}[thm]{Lemma}
\newenvironment{pf}{\begin{proof}}{\end{proof}}
\numberwithin{equation}{section}
\newcommand{\R}{{\mathbb{R}}}
\newcommand{\C}{{\mathbb{C}}}
\newcommand{\Z}{{\mathbb{Z}}}
\newcommand{\Aa}{{\mathbf{A}}}
\newcommand{\Bb}{{\mathbf{B}}}
\newcommand{\Cc}{{\mathbf{C}}}
\newcommand{\Ee}{{\mathbf{E}}}
\newcommand{\Mm}{{\mathbf{M}}}
\newcommand{\Ll}{{\boldsymbol{\lambda}}}
\newcommand{\A}{{\mathcal{A}}}
\newcommand{\M}{{\mathcal{M}}}
\newcommand{\pa}{\partial}
\newcommand{\pr}{\operatorname{pr}}
\newcommand{\LCA}{LC\!\mathcal{A}}
\newcommand{\KCA}{KC\!\mathcal{A}}
\newcommand{\U}{S}
\newcommand{\longitude}{\operatorname{long}}
\newcommand{\mer}{\operatorname{mer}}
\newcommand{\selfl}{\operatorname{sl}}
\def\dfn#1{{\em #1}}
\title[Transverse Knot Contact Homology]{Filtrations on the knot
  contact homology \\ of transverse knots}
\author[T.~Ekholm]{Tobias Ekholm}
 \address{Uppsala University, Box 480,
  751 06 Uppsala, Sweden}
  \email{tobias@math.uu.se}
\author[J.~Etnyre]{John Etnyre}
\address{School of Mathematics, Georgia Institute of Technology,
Atlanta, GA 30332-0160}
\email{etnyre@math.gatech.edu}
\urladdr{\href{http://www.math.gatech.edu/~etnyre}{http://www.math.gatech.edu/\~{}etnyre}}
\author[L.~Ng]{Lenhard Ng}
\address{Mathematics Department, Duke University, Durham, NC 27708}
\email{ng@math.duke.edu}
\urladdr{\href{http://www.math.duke.edu/~ng/}{http://www.math.duke.edu/{}~ng/}}
\author[M.~Sullivan]{Michael Sullivan}
\address{Department of Mathematics, University of Massachusetts,
 Amherst, MA 01003-9305}
\email{sullivan@math.umass.edu}
\begin{document}

\begin{abstract}
We construct a new invariant of transverse links in the standard contact structure on $\R^3.$ This invariant is a doubly filtered version of the knot contact homology differential graded algebra (DGA) of the link, see \cite{EENS}, \cite{Ng08}. Here the knot contact homology of a link in $\R^3$ is the Legendrian contact homology DGA of its conormal lift into the unit cotangent bundle $S^*\R^3$ of $\R^3$, and the filtrations are constructed by counting intersections of the holomorphic disks of the  DGA differential with two conormal lifts of the contact structure. We also present a combinatorial formula for the filtered DGA in terms of braid representatives of transverse links and apply it to show that the new invariant is independent of previously known invariants of transverse links.
\end{abstract}


\maketitle

\section{Introduction} \label{sec:intro}
Constructing effective invariants of transverse knots in contact
3--manifolds that go beyond the obvious homotopy invariants  has been
a notoriously difficult problem. For knots transverse to the standard contact structure on $\R^3$, the first such invariant proven to be effective was
constructed only recently, using combinatorial knot Heegaard Floer
homology \cite{OzsvathSzaboThurston08}; a related transverse
invariant in the Heegaard Floer homology of a general compact contact
$3$--manifold was subsequently constructed in
\cite{LiscaOzsvathStipsiczSzabo09}. In this paper we define and
combinatorially present a new invariant for transverse links in the standard contact
$\R^3$. The new invariant is a filtration on the knot contact homology
differential graded algebra (DGA) \cite{EENS}, \cite{Ng08} for transverse
links and it appears to be quite different from the Heegaard Floer
invariants.  The construction of the invariant can be applied more
generally to produce invariants of transverse links in any contact
3--manifold and might even say interesting things in higher
dimensions, but the details are considerable more difficult, as are the
computations. So we defer the discussion of the general case to a
future paper.

\subsection{Transverse knot contact homology}\label{subsec:intro}
In \cite{Ng08}, the third author constructs a combinatorial DGA associated to a framed knot $K \subset \R^3$ and shows that its homology, combinatorial knot contact homology, is an invariant of framed knots. This invariant detects the unknot and encodes the Alexander polynomial, among other things.

The current authors prove in \cite{EENS} that the combinatorial DGA is (stable tame) isomorphic to the
Legendrian contact homology DGA of the conormal lift of a knot $K$ and also generalize the combinatorial description of knot contact homology to many-component links. Here the conormal lift of a link $K$ is a union $\Lambda_K$ of Legendrian tori in the unit cotangent bundle $S^{\ast}\R^{3}$ of $\R^{3}$ with the contact structure given as the kernel of the canonical  $1$-form. The Legendrian contact homology DGA of $\Lambda_K$ is an algebra generated by its Reeb chords with differential defined by a count of holomorphic disks in $S^{\ast}\R^{3}\times\R$ in the spirit of symplectic field theory \cite{EliashbergGiventalHofer00}. Its calculation in \cite{EENS} uses the contactomorphism $S^{\ast}\R^{3}\approx T^{\ast}S^{2}\times\R$ to transfer the holomorphic disk count in $S^{\ast}\R\times\R$ to $T^{\ast}S^{2}$. The disk count is then carried out using the relation between holomorphic disks and Morse flow trees \cite{Ekholm07}.

We will call the Legendrian invariant of $\Lambda_K$ defined using
holomorphic disks the \emph{knot contact homology} of $K$. We denote
the underlying DGA  by $(\KCA(K),\pa)$ and its homology by
$KCH(K)$. The algebra $\KCA(K)$ is a free graded
tensor algebra over the coefficient ring $R=\Z[H_1(\Lambda_K)]=\Z[\lambda_1^{\pm 1}, \mu^{\pm 1}_1,\dots,\lambda_r^{\pm1},\mu_r^{\pm 1}]$, where $r$ is the number of components of $K$; the differential $\pa$ depends on $K$ as well, but we suppress this dependence to simplify notation.

Let $(x_1,x_2,x_3)$ be coordinates on $\R^{3}$ and let $\xi_0=\ker(dx_3-x_1\,dx_2+x_2\,dx_1)$ denote the standard tight, rotationally symmetric
contact structure on $\R^3.$ 
For links $K$ transverse to $\xi_0$, we extend the coefficient ring of the knot contact homology DGA to $R[U,V],$ where $U$ and $V$ are two formal variables which encode intersections of holomorphic disks with the natural lifts of $\xi_0$ that correspond to its two coorientations. The resulting DGA will be denoted $(\KCA^-(K),\pa^-)$ and has a double filtration: by positivity of intersections the differential $\pa^-$ does not decrease the exponents of $U$ or $V$. Our main result is as follows.

\begin{thm}\label{thm:mainintr}
The filtered stable tame isomorphism type of $(\KCA^-(K),\pa^-),$ and
hence  its homology, is an invariant of the transverse knot or link $K$ in
$(\R^3,\xi_0).$ The DGA can be computed from a braid representative of $K$ as described in Theorem~\ref{thm:mainlinktv} below.
\end{thm}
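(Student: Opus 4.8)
The plan is to treat $(\KCA^-(K),\pa^-)$ as the Legendrian contact homology DGA of the conormal lift $\Lambda_K\subset S^*\R^3$, enriched by the data of the two unit conormal lifts $\Sigma_+,\Sigma_-\subset S^*\R^3$ of the cooriented plane field $\xi_0$, and then to run the standard invariance argument for Legendrian contact homology while carrying the $U,V$ bidegree along at every step. The key elementary observation is that $K$ is transverse to $\xi_0$ exactly when $\Lambda_K$ is disjoint from $\Sigma_+\cup\Sigma_-$ (at a point $p$ the conormal circle of $K$ contains the unit conormal of $\xi_0$ iff $T_pK\subset\xi_0|_p$), and hence a transverse isotopy $K_t$ lifts to a Legendrian isotopy $\Lambda_{K_t}$ through configurations disjoint from the fixed $\Sigma_\pm$. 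So the whole statement reduces to a ``relative'' version of the usual story in which the filtration behaves like a grading that all chain-level maps must respect.

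First I would fix the analytic data: a cylindrical almost complex structure $J$ on the symplectization $S^*\R^3\times\R$ adapted to the canonical contact form and to $\xi_0$, chosen so that every holomorphic disk counted by the differential meets $\Sigma_\pm\times\R$ only in finitely many interior points, each of positive local intersection number (e.g.\ by taking $\Sigma_\pm\times\R$ to be $J$-complex hypersurfaces in the Donaldson-divisor sense). For generic such data the rigid-disk moduli spaces of \cite{EENS} are regular, and their boundaries on $\Lambda_K$ are disjoint from $\Sigma_+\cup\Sigma_-$; assigning to a disk $u$ the monomial $U^{\,u\cdot(\Sigma_+\times\R)}V^{\,u\cdot(\Sigma_-\times\R)}$ and inserting it into the ordinary differential defines $\pa^-$. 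One then verifies $(\pa^-)^2=0$: the two-to-one count of boundary points of rigid one-parameter moduli spaces persists because the $U,V$ bidegree is additive under SFT breaking and gluing, which is the main technical point to nail down; and positivity of intersection shows every monomial $U^aV^b$ in $\pa^-$ has $a,b\ge 0$, so $\pa^-$ lowers neither exponent and $(\KCA^-(K),\pa^-)$ is doubly filtered.

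Invariance of the filtered stable tame isomorphism type then comes from a single mechanism applied both to auxiliary choices and to transverse isotopies. Given a transverse isotopy from $K_0$ to $K_1$, lift it to a generic Legendrian isotopy $\Lambda_{K_t}$ disjoint from $\Sigma_\pm$, break it into elementary steps, and for each step build the Lagrangian trace cobordism $L\subset S^*\R^3\times\R$; since $\Lambda_{K_t}\cap\Sigma_\pm=\emptyset$ for every $t$, $L$ is disjoint from $\Sigma_\pm\times\R$, so the rigid $J$-holomorphic disks with boundary on $L$ again carry well-defined non-negative $U,V$ intersection numbers. Counting these disks yields a DGA morphism and a parametrized count yields a chain homotopy, exactly as in the usual proof; decorating with $U,V$ makes all of these filtered, while births and deaths of Reeb chords contribute the algebraic stabilizations, the new generators and their images under the differential carrying $U,V$ bidegree $0$. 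The composite is a filtered stable tame isomorphism from $(\KCA^-(K_0),\pa^-)$ to $(\KCA^-(K_1),\pa^-)$; specializing to $K_0=K_1$ and varying $J$, the perturbations, and the position of $\Sigma_\pm$ within their isotopy class handles well-definedness. Since filtered stable tame isomorphisms induce isomorphisms on the filtered homology and on its spectral sequence, this also gives the ``and hence its homology'' clause.

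The braid computation is the content of Theorem~\ref{thm:mainlinktv}: following \cite{EENS} I would transport the disk count from $S^*\R^3\times\R$ to $T^*S^2\times\R$ through the contactomorphism $S^*\R^3\approx T^*S^2\times\R$, replace holomorphic disks by Morse flow trees \cite{Ekholm07}, identify the images of $\Sigma_\pm$ as explicit submanifolds of $T^*S^2$, and read off each tree's intersection number with them combinatorially from where it crosses the relevant cotangent fibers; feeding this into the braid formulas of \cite{Ng08},\cite{EENS} produces the explicit filtered DGA. As a bonus, checking that the combinatorial filtered DGA is invariant under braid conjugation and positive (de)stabilization gives, via the transverse Markov theorem, a second and purely combinatorial proof of the transverse invariance. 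I expect the main obstacle to be exactly the two bookkeeping points flagged above: proving that the $U,V$ bidegree is conserved under breaking and gluing (both for $\pa^-$ and for the cobordism maps), and carrying out the matching flow-tree intersection count; the remainder is a decorated rerun of \cite{EENS} and of the standard Legendrian contact homology invariance proof.
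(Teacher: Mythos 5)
Your overall strategy is the paper's: decorate the knot contact homology differential with the intersection numbers of the rigid disks against the two conormal lifts of $\xi_0$, invoke positivity of intersections to get the double filtration, and rerun the invariance machinery in a filtered way; the observation that $K$ is transverse iff $\Lambda_K$ misses the two lifts is exactly Lemma~\ref{lma:disjoint1}. However, there is a genuine gap at the step you dismiss with ``e.g.\ by taking $\Sigma_\pm\times\R$ to be $J$-complex hypersurfaces in the Donaldson-divisor sense.'' For an $\R$-invariant almost complex structure on the symplectization, $J$-complexity of $\Sigma_\pm\times\R$ forces the Reeb field to be tangent to $\Sigma_\pm$, i.e.\ $H_\pm$ must be a union of Reeb orbits; equivalently, in the jet-space/Lagrangian-projection picture in which the disks are actually counted, $\overline{H}_\pm=\pi_\C(H_\pm)$ is a codimension-two submanifold of $T^*S^2$ only when $H_\pm$ is Reeb-invariant, which happens exactly when the Reeb flow lines of the chosen contact form are geodesics of the chosen metric. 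The rotationally symmetric form $\alpha_0$ --- the one compatible with the braid computation of \cite{EENS} --- fails this for the round sphere bundle, while the geodesible representative $\ker(\sin x_1\,dx_2+\cos x_2\,dx_3)$ makes the disk count intractable. The paper spends most of Section~\ref{sec:definition} circumventing precisely this point: it replaces the unit sphere bundle by the sphere bundle over a convex surface $S$ that is flat near the poles, so that $\overline{H}_\pm$ is an honest $J_0$-complex subvariety only over the flat caps (Lemma~\ref{lma:subvariety}), then shrinks $K$ into a tiny ball (Lemma~\ref{lma:shrink}, which uses the anisotropic transverse-preserving scaling $(x_1,x_2,x_3)\mapsto(\sqrt{c}x_1,\sqrt{c}x_2,cx_3)$) and uses an action-plus-monotonicity estimate (Lemmas~\ref{lma:smallaction}, \ref{lma:confine}, \ref{lma:disjoint2}) to confine every relevant holomorphic disk to the region where $\overline{H}_\pm$ is complex. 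Without some such confinement argument the exponents $n_U,n_V$ are neither well defined nor non-negative, and both the filtration property and $(\pa^-)^2=0$ fail; this is the main missing idea in your write-up.

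Two smaller points. Your invariance argument via Lagrangian trace cobordisms and chain homotopies yields, as stated, a homotopy equivalence of DGAs rather than the claimed \emph{stable tame} isomorphism; the paper instead runs the bifurcation analysis of \cite{EkholmEtnyreSullivan05b} (parametrized moduli spaces, index $-1$ disks handled via the product Legendrian $\Lambda_{K_{t'}}\times\R\subset J^1(S\times\R)$, births/deaths via degenerate gluing), checking at each step that the resulting tame isomorphisms and stabilizations respect the $U,V$ exponents and that the confinement lemmas persist for the product Legendrian. Your outline of the braid computation is consistent with Section~\ref{sec:computations}, where the intersection numbers are extracted from the explicit flow-tree models of the disks $I_N,Y_N,I_S,Y_S,E_1,E_2$ near the degenerate unknot conormal.
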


This invariant of transverse links in particular distinguishes transverse knots with the same self-linking number and with the same Heegaard Floer invariants, see Section \ref{sec:HF}.
The explicit formula for $(\KCA^-(K),\pa^-)$ is somewhat involved and is therefore presented separately in Subsection \ref{subsec:computation}.
It should however be noted that our method for
obtaining the combinatorial formula relies heavily on the explicit description of holomorphic disks used in the
computation of knot contact homology in \cite{EENS}.
An alternate and purely combinatorial approach to Theorem \ref{thm:mainintr} is worked out in
\cite{Ng10}. It begins with the combinatorial description
of Theorem~\ref{thm:mainlinktv} in terms of a braid representation of the knot and proves invariance under
transverse isotopy, without any reference to the underlying geometry, via a study of effects of Markov moves. See \cite{Ng10} also for various algebraic properties of the invariant and more detailed calculations which demonstrate its effectiveness.

Those familiar with the algebraic formalism of Heegaard Floer homology
will notice that we can construct several other invariants from
$(\KCA^-(K),\pa^-).$ Two simpler invariants $(\widehat{\KCA}(K),\widehat\pa)$
and $\left(\widehat{\widehat{\KCA}}(K), \widehat{\widehat\pa}\right)$
 arise by
setting $(U,V)=(0,1)$ and $(0,0)$, respectively.  Setting $(U,V)=(1,1)$, on the other hand, reduces
$(\KCA^-(K),\pa^-)$ to the original knot contact
homology DGA, $(\KCA(K),\pa).$  Alternatively, one can tensor
$(\KCA^-(K),\pa^-)$ with an extended coefficient ring
$R[U^{\pm1},V^{\pm1}]$, to obtain a DGA,
$(\KCA^\infty(K),\pa^\infty).$ It turns out that
$(\KCA^\infty(K),\pa^\infty)$ depends on only the topological isotopy
class of $K$, rather than its transverse isotopy class.
\begin{thm}\label{thm:topNOTtran}
If $K$ is a transverse knot, then
as a DGA over
$\Z[\lambda^{\pm 1},\mu^{\pm 1},U^{\pm 1},V^{\pm 1}]$,
$(\KCA^\infty(K),\pa^\infty)$ is an invariant, up to stable tame
isomorphism, of the
topological knot underlying $K$.
\end{thm}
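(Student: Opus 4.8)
The plan is to show that when the variables $U$ and $V$ are inverted, the filtration information collapses and $(\KCA^\infty(K),\pa^\infty)$ becomes stable tame isomorphic to an invariant that only sees the topological knot type. The key observation is that over $R[U^{\pm1},V^{\pm1}]$ one can rescale generators: if $a$ is a generator of $\KCA^-(K)$, replacing $a$ by $U^{m}V^{n}a$ for suitable integers $m,n$ is a tame automorphism of the DGA over the extended ring. So the first step is to understand exactly which powers of $U$ and $V$ appear in the differential $\pa^\infty$ on each generator, using the combinatorial formula of Theorem~\ref{thm:mainlinktv} (which is available to me as an assumed prior result). The $U$- and $V$-exponents of a term in $\pa^-$ record signed intersection numbers of the corresponding holomorphic disk with the two conormal lifts of $\xi_0$; these intersection numbers are controlled by homological data — essentially winding numbers of the braid strands — so I would compute them explicitly from the braid presentation.

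Next I would use these explicit exponents to exhibit a change of generators (a tame automorphism over $R[U^{\pm1},V^{\pm1}]$) that eliminates all but a ``diagonal'' dependence on $U$ and $V$: after rescaling each Reeb chord generator by an appropriate monomial $U^{m_a}V^{n_a}$, the differential $\pa^\infty$ should be transformed into one in which $U$ and $V$ enter only through an overall substitution determined by the framing/homology class, and in particular no longer through the transverse-geometric intersection data. Concretely, I expect that after this rescaling the resulting DGA is isomorphic over $R[U^{\pm1},V^{\pm1}]$ to $(\KCA(K),\pa)\otimes_R R[U^{\pm1},V^{\pm1}]$, possibly after the further innocuous substitution $\lambda\mapsto \lambda$ up to multiplication by a monomial in $U,V$ coming from the choice of transverse framing versus topological framing. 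Since $(\KCA(K),\pa)$ is an invariant of the framed knot and the framing ambiguity is absorbed into the units $U,V$, the upshot is that $(\KCA^\infty(K),\pa^\infty)$ depends only on the topological knot type.

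Finally, invariance under the stable tame isomorphism relation is inherited: a filtered stable tame isomorphism between the DGAs of two transversely isotopic representatives of the same topological knot, tensored up to $R[U^{\pm1},V^{\pm1}]$, is in particular a stable tame isomorphism; and the rescaling automorphisms above are tame, so conjugating by them preserves the stable tame isomorphism class. Combining, any two topologically isotopic transverse knots — which can be connected by a sequence of transverse isotopies and transverse stabilizations/destabilizations (equivalently, positive and negative Markov moves on braids) — yield stably tame isomorphic $\KCA^\infty$. I expect the main obstacle to be the bookkeeping in the first two steps: one must verify that the $U,V$-exponents occurring in the braid formula of Theorem~\ref{thm:mainlinktv} are exactly of the form that allows a global diagonalizing rescaling, and in particular that the exponent data is a coboundary in the appropriate combinatorial sense (so that no genuine obstruction survives after inverting $U$ and $V$). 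Checking this against the explicit differential — especially the behavior under the destabilization move, where a generator is removed and the effect on the remaining $U,V$-exponents must be tracked — is where the real work lies.
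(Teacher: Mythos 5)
Your guiding intuition---that after inverting $U$ and $V$ the exponents become homologically controlled and the transverse-geometric content evaporates---is the right one, but the specific mechanism you propose fails, and the failure is visible already for the unknot. You want to find monomials $U^{m_a}V^{n_a}$ rescaling each generator so that $\pa^\infty$ becomes the unfiltered differential tensored up to $R[U^{\pm1},V^{\pm1}]$ (possibly after a monomial substitution in $\lambda,\mu$). For the standard transverse unknot, $\pa^- c = U+\lambda+\lambda\mu V+\mu$: the four terms carry $(U,V)$-exponent vectors $(1,0),(0,0),(0,1),(0,0)$ while their boundary homology classes are $(\longitude,\mer)=(0,0),(1,0),(1,1),(0,1)$. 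A rescaling of the single generator $c$ multiplies all four terms by the same monomial, and no monomial substitution $\lambda\mapsto U^aV^b\lambda$, $\mu\mapsto U^cV^d\mu$ can then equalize the residual exponents (one gets the inconsistent system $a=b=c=d=0$, $a+c$ or $b+d=\pm 1$ from comparing the $\lambda$, $\mu$, and $\lambda\mu$ terms). So the exponent data is \emph{not} a coboundary in your sense, and $(\KCA^\infty(K),\pa^\infty)$ is not $(\KCA(K),\pa)\otimes_R R[U^{\pm1},V^{\pm1}]$ via any such gauge transformation. The step you flagged as ``where the real work lies'' is in fact an obstruction, not a verification.

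The paper's resolution is not to eliminate the $U,V$-dependence but to reinterpret it. One regards each rigid disk $u$, capped off at its Reeb chords, as a class $[u]\in H_2(\U^*\R^3,\Lambda_K)\cong\Z^3$; the intersection numbers $n_U,n_V$ are linear functionals on this group. In the basis $s=[S^2]$, $l=$ (conormal lift of a Seifert surface), $m=$ (hemisphere of a fiber), the monomial $U^{n_U[u]}V^{n_V[u]}\lambda^{\longitude}\mu^{\mer}$ becomes, after the substitution $\tilde\lambda=\lambda/U$, $\tilde\mu=\mu U$, $\tilde\sigma=UV$ (and the $\selfl(K)$-dependent renormalization of $\lambda$ built into the definition of $\pa^\infty$), exactly $\tilde\lambda^{l(u)}\tilde\mu^{m(u)}\tilde\sigma^{s(u)}$. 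Thus $\KCA^\infty$ \emph{is} the unfiltered Legendrian contact homology DGA of $\Lambda_K$ with coefficients enhanced from $\Z[H_1(\Lambda_K)]$ to $\Z[H_2(\U^*\R^3,\Lambda_K)]$, which is manifestly a topological invariant---no Markov-move analysis is needed (that is the alternative combinatorial route of \cite{Ng10}). The residual, non-removable exponent $s(u)$ is the $[S^2]$-component of the disk's relative class: it is topological but genuinely extra information beyond $\KCA(K)$, which is why your target DGA is the wrong one. The only place the transverse geometry could intrude is through the choice of basis $(s,l,m)$, and the paper checks separately that this basis, including the $\selfl$-normalization of $l$, is canonical for the topological knot type.
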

In fact,  it equals the Legendrian contact homology  of $\Lambda_K$ with coefficients in
a relative homology group, see Subsection~\ref{subsec:infty} and also
\cite{Ng10}. A statement similar to Theorem~\ref{thm:topNOTtran} holds
for general transverse links but will not be proven here.

\subsection{Calculating $(\KCA(K)^{-},\partial^-)$ from a braid presentation}\label{subsec:computation}
We now turn to the computation of our transverse link invariant. To this end we first need to introduce some notation. Our notation and conventions differ slightly from those of \cite{Ng10}, in which the equivalence of conventions is discussed.

The unit circle $U$ in the plane $\{x_3=0\}$ is transverse to the
contact structure defined by the $1$-form $dx_3-x_2\,dx_1+x_1\,dx_2$. By work
of Bennequin \cite{Bennequin83}, any transverse link $K$ can be isotoped
so that it is braided around $U$, i.e., contained in a tubular
neighborhood $U\times D^2$ of $U$ so that it is
transverse to the disk fibers. So $K$ can be represented as the
closure of a braid $B$ in $U\times D^2=S^1\times D^2$. The braid $B$
is an element of the braid group $B_n$ for some $n$ and can be
expressed as a word in the standard generators of $B_n$ and their inverses; 
here the standard generator $\sigma_j$
($j=1,\ldots,n-1$) corresponds to a braid that intertwines strands $j$ and $j+1$ positively.

The Legendrian contact homology DGA of $\Lambda_K$ is a
free associative non-commutative unital DGA
over the group ring of $H_1(\Lambda_{K};\Z)$.
Fixing a framing on each
component of $K$ (e.g., the standard topological $0$-framing) yields a distinguished basis
$\{\lambda_1,\mu_1,\ldots,\lambda_r,\mu_r\}$ of $H_1(\Lambda_{K};\Z)$,
where $r$ is the number of components of $K$. More precisely, by
identifying $T\R^3$ with $T^*\R^3$, we can
identify $\Lambda_K$ with the boundary of a tubular neighborhood of
$K$, which consists of $r$ disjoint tori. Each
component of $K$ has a natural orientation given by the positive coorientation
of the contact structure $\xi_0$. Then $\mu_j,\lambda_j$ are the
meridian and framed longitude of the $j$-th torus.

Now, as described above, suppose that $K$ is the closure of a braid $B\in B_n$ in a tubular
neighborhood of the standard transverse unknot $U.$
Let $\A_n$ denote the graded unital algebra over $\Z[\lambda_1^{\pm
  1},\mu_1^{\pm 1},\dots,\lambda_r^{\pm 1},\mu_r^{\pm 1}]$
freely generated by
\begin{align*}
&\{a_{ij}\}_{1\le i,j\le n;\,i\ne j} \text{ in degree }0,\\
&\{b_{ij}\}_{1\le i,j\le n;\,i\ne j} \text{ in degree }1,\\
&\{c_{ij}\}_{1\le i,j\le n} \text{ in degree $1$, and}\\
&\{e_{ij}\}_{1\le i,j\le n\,} \text{ in degree }2,
\end{align*}
where the degrees of $\lambda_j^{\pm 1}$ and $\mu_j^{\pm 1}$ equal $0$ for $j=1,\dots,r$. (When there is only one link component, we drop the subscripts on $\lambda_j,\mu_j$.)
Let $\A^{(0)}_n$ denote the subalgebra of $\A_n$ of elements of degree $0$.

We define an automorphism $\phi_B\colon \A^{(0)}_n\to
\A^{(0)}_n$ as follows. Introduce auxiliary variables
$\tilde{\mu}_1,\ldots,\tilde{\mu}_n$ of degree $0$, and write
$\tilde{\A}^{(0)}_n$
for the unital algebra over $\Z[\tilde{\mu}_1^{\pm
  1},\ldots,\tilde{\mu}_n^{\pm 1}]$ freely generated by
$\{a_{ij}\}_{1\leq i,j\leq n;i\neq j}$. For $k=1,\ldots,n-1$, let $\phi_{\sigma_k} : \tilde{\A}^{(0)}_n \to
\tilde{\A}^{(0)}_n$ be given by
\[
\begin{aligned}
\phi_{\sigma_k}(a_{ij})&=
a_{ij}  \quad & i,j\ne k, k+1\\
\phi_{\sigma_k}(a_{k\,k+1})&=
-a_{k+1\,k} \quad & \\
\phi_{\sigma_k}(a_{k+1\,k})&=
-\tilde{\mu}_k\tilde{\mu}_{k+1}^{-1}a_{k\,k+1}\quad &\\
\phi_{\sigma_k}(a_{i\,k+1})&=
a_{ik}\quad & i\not= k, k+1\\
\phi_{\sigma_k}(a_{k+1\, i})&=
a_{ki}\quad & i\not=k, k+1\\
\phi_{\sigma_k}(a_{ik})&=
a_{i\,k+1} - a_{ik}a_{k\, k+1}\quad& i < k\\
\phi_{\sigma_k}(a_{ik})&=
a_{i\,k+1} - \tilde{\mu}_{k}\tilde{\mu}_{k+1}^{-1}a_{ik}a_{k\,
  k+1}\quad& i > k+1\\
\phi_{\sigma_k}(a_{ki})&=
a_{k+1\,i} - a_{k+1\,k}a_{ki}\quad &i \ne k, k+1\\
\phi_{\sigma_k}(\tilde{\mu}_k^{\pm 1}) &= \tilde{\mu}_{k+1}^{\pm 1} \\
\phi_{\sigma_k}(\tilde{\mu}_{k+1}^{\pm 1}) &= \tilde{\mu}_k^{\pm 1} \\
\phi_{\sigma_k}(\tilde{\mu}_i^{\pm 1}) &= \tilde{\mu}_i^{\pm 1} \quad & i\neq k,k+1.
\end{aligned}
\]
Write $B$ in terms of braid group generators, $B=\sigma_{i_0}^{m_0}\ldots \sigma_{i_l}^{m_l}\in B_n,$ and let $\phi_B = (\phi_{\sigma_{i_0}})^{m_0}\circ
\cdots\circ(\phi_{\sigma_{i_l}})^{m_l}$. Then $\phi$ descends to a
homomorphism from $B_n$ to the automorphism group of
$\tilde{\A}^{(0)}_n$; in particular, $\phi$ satisfies the braid
relations.

For $j=i,\ldots,n$, let $\alpha(i)\in\{1,\ldots,r\}$ be the
number of the link component of $K$ corresponding to the $i$-th strand
of $B$. Then $\phi_B$ can be viewed as an automorphism of $\A^{(0)}_n$
by setting $\tilde{\mu}_i = \mu_{\alpha(i)}$ for all $i$ and having
$\phi_B$ act as the identity on $\lambda_i$ for all $i$. As an automorphism of
$\A^{(0)}_n$, $\phi_B$ acts as the identity on $\mu_i$ as well.

For convenient notation we assemble the generators of $\A_n$ into
$(n\times n)$-matrices.
Writing $\Mm_{ij}$ for the element in position $ij$ in the $(n\times n)$-matrix $\Mm$, we define the $(n\times n)$-matrices
\[
\begin{aligned}
\Aa:\quad &
\begin{cases}
\Aa_{ij}=a_{ij} &\text{ if }i<j,\\
\Aa_{ij}=\mu_{\alpha(j)} a_{ij} &\text{ if }i>j,\\
\Aa_{ii}=1+\mu_{\alpha(i)},
\end{cases}& \qquad\qquad
\Bb:\quad &
\begin{cases}
\Bb_{ij}=b_{ij} &\text{ if }i<j,\\
\Bb_{ij}=\mu_{\alpha(j)} b_{ij} &\text{ if }i>j,\\
\Bb_{ii}=0,
\end{cases}\\
\Cc:\quad &
\begin{cases}
\Cc_{ij}=c_{ij},
\end{cases}&
\Ee:\quad &
\begin{cases}
\Ee_{ij}=e_{ij}.
\end{cases}
\end{aligned}
\]
We also associate $(n\times n)$-matrices with coefficients in $\A_n^{(0)}$ to the braid $B$ as follows. Let $\phi_B(\Aa)$ be the matrix defined by $\bigl(\phi_B(\Aa)\bigr)_{ij}=\phi_B(\Aa_{ij})$. Then there are invertible matrices $\Phi^L_B$ and $\Phi^R_B$ so that
\[
\phi_B(\Aa)=\Phi^{L}_{B}\cdot \Aa\cdot \Phi^{R}_{B}.
\]
More specifically, we define these matrices by setting $B'$ to be the
$(n+1)$-braid obtained by adding an extra strand labeled $0$ to $B$
(that is, viewing the word defining $B$
as a word in the $(n+1)$-strand braid group generated by $\sigma_0,\ldots,\sigma_{n-1}$). Let $\phi'_B$ be the corresponding automorphism of the
$\{a_{ij}\}_{0\leq i,j\leq n;i\neq j}.$ Then define
$\Phi_B^L$ and $\Phi_B^R$ by
\[
\phi'_B (a_{i0})=\sum_{j=1}^n \left( \Phi_B^L\right)_{ij} a_{j0}\quad\text{and}\quad \phi'_B(a_{0j})=\sum_{i=1}^na_{0i}\left( \Phi^R_B\right)_{ij};
\]
see also \cite{EENS,Ng05,Ng08}.
(Note that since the $0$-th strand does not interact with the others,
$\tilde{\mu}_0$ does not appear anywhere in the expressions for
$\phi'_B(a_{i0})$ and $\phi'_B(a_{0j})$, and so $\Phi^L_B,\Phi^R_B$
have coefficients in $\A^{(0)}_n$.)

Also, define an $(n\times n)$ coefficient matrix $\Ll$ as
follows. Among the strands $1,\ldots,n$ of the braid, call a strand
\emph{leading} if it is the first strand belonging to its
component. Then let $\Ll$ be the diagonal matrix defined by
\[
\Ll:\quad
\begin{cases}
\Ll_{ij}=0 &\text{if }i\ne j,\\
\Ll_{ii}=\lambda_{\alpha(i)}\mu_{\gamma(i)}^{w(i)}
&\text{if the $i^{\rm th}$ strand is leading},\\
\Ll_{ii}= 1 &\text{otherwise},
\end{cases}
\]
where $w(i)$ is the writhe (algebraic crossing number) of the $i$-th component of the braid, considered by itself.
Finally, in order to capture the two filtrations, we define the following additional matrices:
\[
\begin{aligned}
\Aa^{U}:\quad &
\begin{cases}
\Aa_{ij}^{U}= U a_{ij} &\text{ if }i<j,\\
\Aa_{ij}^{U}=\mu_{\alpha(j)} a_{ij} &\text{ if }i>j,\\
\Aa_{ii}^{U}=U+\mu_{\alpha(i)},
\end{cases}\qquad\qquad&
\Bb^{U}:\quad &
\begin{cases}
\Bb_{ij}^{U}= U b_{ij} &\text{ if }i<j,\\
\Bb_{ij}^{U}=\mu_{\alpha(j)} b_{ij} &\text{ if }i>j,\\
\Bb_{ii}^{U}=0,
\end{cases}\\
\Aa^{V}:\quad &
\begin{cases}
\Aa_{ij}^{V}= a_{ij} &\text{ if }i<j,\\
\Aa_{ij}^{V}=\mu_{\alpha(j)} V a_{ij} &\text{ if }i>j,\\
\Aa_{ii}^{V}=1+\mu_{\alpha(i)} V,
\end{cases}&
\Bb^{V}:\quad &
\begin{cases}
\Bb_{ij}^{V}=b_{ij} &\text{ if }i<j,\\
\Bb_{ij}^{V}=\mu_{\alpha(j)} V b_{ij} &\text{ if }i>j,\\
\Bb_{ii}^{V}=0.
\end{cases}
\end{aligned}
\]
\begin{thm}\label{thm:mainlinktv}
The filtered knot contact homology DGA, $(\KCA(K)^{-},\pa^{-})$ of a
transverse link $K$ represented as a braid $B$ on $n$ strands is given
by the DGA, $(\A_n,\pa^-)$, with the differential $\pa^-\colon\A_n\to\A_n$ defined by the following matrix equations:
\begin{align*}
\pa^- \mathbf{A} &= 0, \\
\pa^- \mathbf{B} &= -\Ll^{-1}\cdot\Aa\cdot\Ll\,\, +\,\, \Phi^L_B \cdot \Aa \cdot \Phi^R_B, \\
\pa^- \mathbf{C} &= \Aa^{V}\cdot\Ll\,\, +\,\, \Aa^{U}\cdot\Phi^{R}_B, \\
\pa^- \mathbf{E} &= \Bb^{V}\cdot(\Phi_B^{R})^{-1}\,\, +\,\, \Bb^{U}\cdot\Ll^{-1}\,\, -\,\,
\Phi^L_B\cdot\Cc\cdot\Ll^{-1}\,\, +\,\, \Ll^{-1}\cdot\Cc \cdot (\Phi^R_B)^{-1},
\end{align*}
where if $\Mm$ is an $(n\times n)$-matrix, the matrix $\pa^- \Mm$ is defined by $(\pa^- \Mm)_{ij}=\pa^- (\Mm_{ij})$.
\end{thm}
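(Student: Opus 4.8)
The plan is to obtain Theorem~\ref{thm:mainlinktv} by refining the computation of the unfiltered knot contact homology DGA $(\KCA(K),\pa)$ of \cite{EENS}: to each holomorphic disk $u$ contributing to the differential we attach the two algebraic intersection numbers $i(u)=u\cdot(\Sigma_U\times\R)$ and $j(u)=u\cdot(\Sigma_V\times\R)$, where $\Sigma_U,\Sigma_V\subset S^*\R^3$ are the two conormal lifts of the contact plane field $\xi_0$ corresponding to its two coorientations. By the construction of $(\KCA^-(K),\pa^-)$, such a disk contributes $U^{i(u)}V^{j(u)}$ times its weight in $\pa$, so setting $U=V=1$ recovers $(\KCA(K),\pa)$, whose braid presentation is exactly that of \cite{EENS}. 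Hence it suffices to (a) verify that the $i(u),j(u)$ are well defined and nonnegative, and (b) compute them for each disk enumerated in \cite{EENS}, checking that the resulting powers of $U$ and $V$ assemble into the matrices $\Aa^U,\Aa^V,\Bb^U,\Bb^V$ precisely as in the stated formulas; the gradings, the signs, the matrices $\Phi^L_B,\Phi^R_B,\Ll$, and the identity $\phi_B(\Aa)=\Phi^L_B\Aa\Phi^R_B$ are all imported verbatim from \cite{EENS}.

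For (a), transversality of $K$ is what makes the bookkeeping legitimate: a point of $\Sigma_U\cup\Sigma_V$ lying over $p\in\R^3$ is $\pm\alpha_0(p)/|\alpha_0(p)|$, while a point of $\Lambda_K$ over $p\in K$ is a unit covector annihilating $T_pK$, and the two coincide only if $T_pK\subset\ker\alpha_0(p)=\xi_0(p)$, i.e.\ only if $K$ is tangent to $\xi_0$ at $p$. So $\Lambda_K$ is disjoint from $\Sigma_U\cup\Sigma_V$, the boundary of every disk $u$ avoids these submanifolds, and $i(u),j(u)$ are finite by SFT compactness. Nonnegativity --- hence the filtration property that $\pa^-$ never lowers the $U$- or $V$-exponent --- is the positivity of intersections built into the construction of $(\KCA^-(K),\pa^-)$, once $\Sigma_U\times\R$ and $\Sigma_V\times\R$ are placed in position (e.g.\ made pseudoholomorphic) with respect to the almost complex structure. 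Finally $(\pa^-)^2=0$ follows from $\pa^2=0$ together with the additivity of $i$ and $j$ over the two levels of a broken disk, so $(\A_n,\pa^-)$ is a genuine DGA once its differential has been identified.

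For (b) I would work in the $T^*S^2$ model of \cite{EENS}: via the contactomorphism $S^*\R^3\approx T^*S^2\times\R$ and the translation of \cite{Ekholm07} between rigid holomorphic disks and rigid Morse flow trees, the disks contributing to $\pa$ are the explicit flow trees computed there. The crucial step is to locate the images of $\Sigma_U$ and $\Sigma_V$ in this model and then to intersect them with each flow tree; since $B$ lies in a tubular neighborhood of the transverse unknot $U$, this is a local computation near $U$, and its outcome is that every rigid flow tree meets $\Sigma_U\times\R$ and $\Sigma_V\times\R$ transversally, positively, and only along certain distinguished edges, attaching a $U$ exactly where the accompanying homological weight carries no meridian factor and a $V$ exactly where it does. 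Reading off the disks of \cite{EENS} then gives the four equations: the degree-$0$ generators $a_{ij}$ support no disks, so $\pa^-\Aa=0$; the disks computing $\pa\Bb$ (the framed longitudinal push-off $\Ll$ around the closed braid, producing $-\Ll^{-1}\Aa\Ll$, and the braid automorphism, producing $\Phi^L_B\Aa\Phi^R_B$) miss $\Sigma_U$ and $\Sigma_V$, so $\pa^-\Bb$ is undecorated; the two families of disks computing $\pa\Cc$ decorate $\Aa$ as $\Aa^U$ in the term with $\Phi^R_B$ (a $U$ where $\mu$ is absent) and as $\Aa^V$ in the term with $\Ll$ (a $V$ where $\mu$ is present), with the diagonal entries $U+\mu_{\alpha(i)}$ and $1+\mu_{\alpha(i)}V$ arising from the two disks that close up around a single strand; and the analogous new families computing $\pa\Ee$ produce $\Bb^V(\Phi^R_B)^{-1}$ and $\Bb^U\Ll^{-1}$, while the disks responsible for the two $\Cc$-terms again avoid $\Sigma_U,\Sigma_V$. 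This is precisely $\pa^-$ as asserted.

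The step I expect to be the main obstacle is exactly this intersection count against the non-compact rigid flow trees: one must pin down the position of $\Sigma_U$ and $\Sigma_V$ in the flow-tree/cotangent model, verify transversality and the claimed edge-localization, and keep straight which conormal is met by which disk, with what multiplicity, and hence with which of $U$ and $V$ --- in particular for the diagonal entries of $\Aa^U,\Aa^V$ and for the placement of the decorations relative to the meridian factors and the framing matrix $\Ll$. A secondary technical point, needed for (a), is to check that $\Sigma_U\times\R$ and $\Sigma_V\times\R$ can be brought into position for positivity of intersections without perturbing the rigid disk count of \cite{EENS}; this one arranges by supporting the perturbation away from the finitely many rigid disks and invoking the standard transversality and compactness for their moduli spaces.
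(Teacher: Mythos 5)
Your proposal follows essentially the same route as the paper: reduce to the unfiltered braid computation of \cite{EENS}, use the degeneration of $\Lambda_K$ onto $\Lambda_U$ to localize every rigid disk near one of the model disks $I_N,Y_N,I_S,Y_S,E_0,E_1$ (and to show the $a/b$-disks and the $E$-disks miss the lifted hypersurfaces), then compute the intersection numbers of the four polar model disks with $\overline{H}_\pm$ and do the bookkeeping that matches $U$/$V$ decorations to the presence or absence of meridian factors. The technical points you flag as the main obstacles (pinning down $\overline{H}_\pm$ in the cotangent model, transversality/positivity without disturbing the disk count) are exactly the ones the paper resolves by shrinking the link, confining disks by monotonicity, and flattening $S$ near the poles so that $\overline{H}_\pm$ is honestly $J_0$-complex where it matters.
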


The rest of the paper is organized as follows. We provide some general geometric background in
Section~\ref{sec:construction} before defining the transverse
invariant $(\KCA^-(K),\pa^-)$ and proving invariance in
Section~\ref{sec:definition}. In Section~\ref{sec:computations}, we
derive the combinatorial formula for $(\KCA^-(K),\pa^-)$ by proving Theorem~\ref{thm:mainlinktv}, and present examples
in Section~\ref{sec:ex}.

\section{Geometric Constructions}\label{sec:construction}
In this section we begin by recalling the definition of Legendrian contact homology and then discuss filtrations on the Legendrian contact homology DGA induced by complex hypersurfaces with certain properties. In the next subsection we recall the conormal construction and see how it can be used to construct invariants of smooth embeddings using Legendrian contact homology. In the last subsection we show how to construct an appropriate complex hypersurface in the case of the Legendrian contact homology of the conormal lift of a transverse knot in the standard contact structure on $\R^3$ and that it gives an invariant of transverse knots. We note that this section gives a geometric construction of transverse knots invariants that can be generalized to other situations, but to actually compute the invariant for transverse knots in the standard contact structure on $\R^3$ we will need to slightly alter the complex hypersurfaces so that they interact well with the constructions in \cite{EENS}. This is done explicitly in Section~\ref{sec:definition}.

\subsection{Legendrian contact homology}\label{lch}
In \cite{EliashbergGiventalHofer00}, the Legendrian contact homology
$LCH(\Lambda)$ of a Legendrian submanifold $\Lambda$ of a contact manifold
$(V,\xi)$ was introduced. The analytic underpinnings were worked out in detail in
\cite{EkholmEtnyreSullivan07} for a fairly general and useful case (but under the simplifying assumption that the chosen Reeb field of $\xi$ has no closed orbits, see below). In this case the Legendrian contact homology
$LCH(\Lambda)$ is the homology of a DGA, $(\LCA(\Lambda),\pa)$, over a fixed ring,
which changes by a particular type of quasi-isomorphism, called a stable
tame isomorphism, as $\Lambda$ changes by Legendrian isotopy. Thus, the
stable tame isomorphism class of $(\LCA(\Lambda),\pa)$ might be
considered to be the actual Legendrian invariant underlying $LCH(\Lambda)$.

We briefly sketch the definition of this DGA in the case handled in \cite{EkholmEtnyreSullivan07} for the convenience of
the reader and to establish some notation; for a more complete
definition, see \cite{EkholmEtnyreSullivan07} and for generalizations see \cite{EliashbergGiventalHofer00}.

Let $P$ be a manifold with exact symplectic form $d\lambda.$ The
manifold $P\times \R$ has a natural contact structure $\xi=\ker
(dz-\lambda)$ where $z$ is the coordinate on $\R.$ The Reeb vector field of this form is $\pa_z$ and consequently there are no closed Reeb orbits. Consider the projection

\[
\pi_\C:P\times\R \to P.
\]
The algebra $\LCA(\Lambda)$ is
the free tensor algebra generated over $\Z[H_1(\Lambda)]$ by the double points  of $\pi_\C(\Lambda).$ Notice that the double points are in one to one correspondence with ``Reeb chords", that is, flow lines of the Reeb vector field that begin and end on $\Lambda.$ Thus we will frequently refer to double points as Reeb chords.
For the double points we  choose ``capping paths'' in
$\Lambda$: that is, paths in $\Lambda$ that connect any Reeb chord endpoint to a fixed base point in its connected component, and fixed paths connecting the base points of distinct components; together, these give paths that connect the two points in $\Lambda$ which project to a double point in $\pi_\C(\Lambda).$
At a double point $p$ there are two points in $\Lambda$ that project to it. We label the one with larger $z$ coordinate $z^+$ and the other $z^-.$ The projection of a neighborhood of $z^+$ in $\Lambda$ to $P$ will be called the {\em upper sheet at $z$} and the projection of a neighborhood of $z^-$ will be called the {\em lower sheet at $z$}.
We then can define a Maslov type index $|c|$ and $|A|$ of Reeb chords $c$ and homology classes $A\in H_1(\Lambda)$ to provide a grading on $\LCA(\Lambda).$

To define the differential we fix an almost complex structure $J$ on $P$ (which can be thought of as an almost complex structure on $\xi$ using the isomorphism $d\pi_\C|_\xi$). For this almost complex structure the differential is determined by counting (pseudo-)holomorphic disks mapped into $P$ with boundary on $\pi_\C({\Lambda}).$
Given a Reeb chord $a,$ a (noncommutative) word of
Reeb chords ${\mathbf b} = 
b_1\cdots b_m,$ and
a homology class $A \in  H_1(\Lambda),$
we define a moduli space
\begin{equation}
\label{eq:projmdli}
{\M}_A(a;\mathbf{b})
\end{equation}
of holomorphic disks $u\colon D\to P$ with: boundary on $\Lambda;$ one positive\footnote{Using the orientation on $\partial D$ induced by the complex structure a puncture is positive if it maps the segment of the boundary just before the puncture to the lower sheet at the double point and the segment just after the puncture to the upper sheet. The puncture is negative if the roles of the upper and lower sheets are reversed.} puncture at $a$ and negative punctures at $b_1,\dots,b_m$ in the order given by the boundary orientation; and the homology class $A$ given by the lift of $u(\partial D)$ to $\Lambda$ together with the chosen capping paths.
For a generic almost complex structure, this moduli space is a manifold of dimension $|a| - |\mathbf{b}|  - |A|-1$, where $|\mathbf{b}|=\sum_{i=1}^{m} |b_i|$. Furthermore, the space has a natural compactification which consists of (several level) broken curves and which admits the structure of a manifold with boundary with corners.   The moduli spaces can be coherently oriented provided the Legendrian submanifold $\Lambda$ is spin.

Define the differential on the generators of $\LCA(\Lambda)$ by
\begin{equation}\label{stddiff}
        \partial a =
        \sum_{\{u \in {\M}_A(a;{\mathbf b}) \,\,|\,\,
        |a|-|\mathbf{b}|-|A| -1=0\}}
        (-1)^{|a|+1}
        \sigma(u)  e^A {\mathbf b},
\end{equation}
where $\sigma(u) \in \{\pm1\}$ is determined by the moduli
space orientation. The differential is then extended to all of
$\LCA(L)$ by the graded product rule and linearity.

\subsection{Almost complex hypersurfaces and filtration on Legendrian contact homology}
\label{subsec:filtration}
We discuss how to use a complex hypersurface to add a ``filtration'' to the Legendrian contact homology DGA.
With the notation above, suppose that $\overline{H}$ is a submanifold of $P$ such that
\begin{itemize}
\item[$(1)$] $\overline{H}$ is an almost complex hypersurface ($J(T\overline{H})=T\overline{H}$ and the (real) codimension of $\overline{H}$ equals $2$) and
\item[$(2)$] $\Lambda\cap \overline{H}=\varnothing.$
\end{itemize}
Given such $\Lambda$ and $\overline H$, we can extend the
base ring for the contact homology DGA of $\Lambda$ from $\Z[H_1(\Lambda)]$
to $\Z[H_1(\Lambda)][U_{\overline H}]$, by adjoining a formal variable $U_{\overline H}$ and
changing the definition of the boundary map, using powers of $U_{\overline H}$ to keep
track of the number of times the holomorphic disks in the
definition of the boundary map intersect $U_{\overline H}.$ Specifically, given
$u\in  \mathcal{M}_A(a;{\mathbf b}),$ positivity of intersection
shows that the intersection number of the image of $u$ with ${\overline H}$ is a
well-defined and nonnegative integer which we denote $n_{\overline H}(u).$ We can
now modify the differential in Equation~\eqref{stddiff} and define instead:
\begin{equation}\label{filtdiff1}
        \partial_f a =
        \sum_{\{u \in \mathcal{M}_A(a;{\mathbf b}) \,\,|\,\,
        |a|-|\mathbf{b}|-|A| -1=0\}}
        (-1)^{|a|+1}
        \sigma(u)   U_{\overline H}^{n_{\overline H}(u)}e^A {\mathbf b}.
\end{equation}
Conditions $(1)$ and $(2)$ above ensure that $\partial_f$ is a filtered
differential; that is, it respects the filtration
\[
\LCA \supset U_{\overline H} \cdot \LCA \supset U_{\overline H}^2 \cdot \LCA \supset \cdots.
\]
The proof that $(\LCA(\Lambda),\partial)$ is invariant up to stable tame isomorphism carries over to show that the
stable tame isomorphism type of $(\LCA(\Lambda),\partial_f)$ over $\Z[H_1(\Lambda)][U_{\overline H}]$ is an
invariant of $\Lambda$ under isotopies $\Lambda_t$, $0\le t\le 1$, such that $\Lambda_t$ satisfies condition $(2)$ for all $t$, see
Theorem \ref{thm:filterinv} below.

\subsection{The conormal construction and knot contact homology}
\label{subsec:conormal}
Given any $n$-manifold $M,$ the cotangent bundle $T^*M$ has a
canonical symplectic structure $d\lambda$ where $\lambda$ is the
Liouville 1--form. If we choose a metric $g$ on $M$ then we can
consider the unit cotangent bundle $\U^*M.$ The restriction of
$\lambda$ to $\U^*M$ is a contact form which we denote by $\alpha$, and
$\xi=\ker \alpha$ is a contact structure on $\U^*M.$

Let $\pr\colon T^{\ast} M\to M$ denote the natural projection. If $K$ is a
submanifold of $M$ (of any dimension) then the unit conormal bundle
\[
\Lambda_K= \{\beta\in \U^*M: \pr(\beta)=p\in K \textrm{ and  }  \beta(v)=0 \textrm{ for all } v\in T_pK \}
\]
is a Legendrian submanifold of $(\U^*M,\xi).$ If we smoothly isotop $K$
in $M$, it is clear that $\Lambda_K$ will undergo a Legendrian isotopy
in $(\U^*M,\xi).$ Thus any Legendrian isotopy invariant of $\Lambda_K$
is a smooth isotopy invariant of $K.$

In this paper, we consider conormal lifts of links $K\subset \R^{3}$ which are Legendrian submanifolds $\Lambda_K\subset S^{\ast}\R^{3}$. There is a well known contactomorphism $S^{\ast}\R^{3}\cong J^{1}(S^{2})=T^{\ast}S^{2}\times\R$ and we will thus consider $\Lambda_K\subset T^{\ast} S^{2}\times\R$ and use the version of Legendrian contact homology of $\Lambda_K$
defined in \cite{EkholmEtnyreSullivan07}. In particular, we define the \dfn{knot contact homology algebra} of a link $K$ in $\R^3$ to be the Legendrian contact homology algebra of $\Lambda_K  \subset J^1(S^2)$. We denote it $(\KCA(K),\pa)$ and note that the stable tame isomorphism class of $(\KCA(K),\pa)$ is an isotopy invariant of $K$. In \cite{EENS} the authors show how to compute $(\KCA(K),\partial)$ and demonstrate that it is equivalent to the combinatorial knot DGA introduced by the third author in \cite{Ng08}.

We recall for later use that the projection $\pi_F:J^1(S^2)\to S^2\times \R$ is called the front projection and that a generic Legendrian submanifold $\Lambda$ in $J^1(S^2)$ can be recovered uniquely from $\pi_F(\Lambda)\subset S^2\times\R.$ Since $S^2\times \R$ can be visualized as $\R^3\setminus \{(0,0,0)\},$ it will frequently be useful to study a Legendrian submanifold $\Lambda$ via its front projection.

\subsection{Transverse link invariants}

Here we describe the geometry underlying the claimed filtration on knot contact homology, in the case when $K \subset \R^3$ is a transverse link and not simply a topological link.
Consider a contact structure $\xi$ on $\R^3.$ If there is a Reeb vector field $R_\xi$ for $\xi$ such that the flow lines of the vector field trace out geodesics (in the flat Euclidean metric on $\R^3$) then we can consider the submanifolds
\[
H_\xi^\pm=\{\eta\in S^*\R^3: \pm\eta(R_\xi)>0 \text{ and } \eta(v)=0 \text{ for all } v\in \xi\}.
\]
Since the Reeb flow lines of $R_\xi$ are geodesics, $H_\xi^\pm$ are foliated by Reeb flow lines in $S^*\R^3.$ In other words, identifying $S^*\R^3$ with $J^1(S^2)=T^*S^2\times \R$, the projected submanifold $\overline{H}_\xi^\pm=\pi_\C(H_\xi^\pm)$ in $T^*S^2$ is  an embedded codimension 2 submanifold. One may also choose the almost complex structure on $T^*S^2$ so that $\overline{H}_\xi^\pm$ is a holomorphic submanifold. Moreover, if $K$ is a knot in $\R^3$ that is transverse to $\xi$ then its conormal lift $\Lambda_K$ projects to an exact Lagrangian submanifold in $T^*S^2$ that is disjoint from $\overline{H}_\xi^\pm.$ Thus, as discussed above, we can construct an associated filtered contact homology DGA that will be an invariant of the transverse isotopy class of $K.$

To carry out the above construction one must choose
a contact form $\alpha$ with $\xi = \ker(\alpha)$
so that its Reeb flow traces out geodesics. The standard contact structure does have such representatives, for example $\xi_0=\ker(\sin x_1\, dx_2 + \cos x_2 \, dx_3),$ but it is quite difficult to actually compute the filtered contact homology DGA for this contact structure. To take advantage of the computations in \cite{EENS} we would prefer to work with the contact form $\ker(dx_3+r^2\, d\theta),$ but this contact form does not have a Reeb vector field with the requisite properties. In the remainder of the paper we overcome this problem by considering a different $S^2$-subbundle $B$ of $T^*\R^3$ instead of the unit cotangent bundle. As long as each fiber in this subbundle bounds a convex region containing the origin, we can still identify $B$ with $J^1(S^2).$ By a judicious  choice of $B$ we will see that the projection of $H_\xi^\pm$ to $T^*S^2$ retains enough of the properties discussed above to allow us to explicitly calculate a filtered invariant for transverse knots in $\R^3.$

\section{The Filtered DGA of Transverse Links in $(\R^{3},\xi_0)$} \label{sec:definition}

In this section we show how to construct from the standard contact structure $(\R^{3}, \xi_{0})$ a pair of complex hypersurfaces $\overline{H}_{\pm}$ in $T^{\ast} S^{2}$ satisfying the following:  if $K$ is any link in a sufficiently small ball around the origin which is transverse to $\xi_0$, then $\overline{H}_{\pm}\cap\pi_\C(\Lambda_K)=\varnothing$.
In order to get a computable invariant we adapt the geometry and use a slightly non-standard version of $S^{\ast}\R^{3}$.

In Subsection~\ref{defS} we describe our geometric model of $S^{\ast}\R^{3}$ and its relation to $T^{\ast} S^{2}\times\R$. In Subsection~\ref{subsec:confining} we show that we can control the image of the holomorphic disks used to compute the knot contact homology so that they lie near the zero section in
$T^{\ast} S^2$ if the original link is sufficiently small.  We then discuss the conormal lift of the standard contact structure on $\R^3$ in Subsection~\ref{subsec:lift} and show in Subsection~\ref{transiso} that it is no restriction to assume that all transverse links and isotopies lie in a small ball around the origin. In Subsections~\ref{ssec:acs} and~\ref{ssec:filteredDGA}, we define a suitable almost complex structure on $T^{\ast} S^2$ and then prove the filtered DGA of a transverse link is well-defined and invariant up to stable tame isomorphisms under isotopies through transverse links. Finally, in Subsection~\ref{subsec:infty}, we explain why the infinity theory for knots in $\R^3$ is a topological invariant.

\subsection{The spherical conormal bundle and the $1$-jet space of $S$}\label{defS}
Let $y=(y_1,y_2,y_3)$ be standard Euclidean coordinates on
$\R^{3}$. Let $S$ denote the smooth boundary of a (not necessarily
strictly) convex subset of $\R^{3}$ which is symmetric with respect to
reflection in the $y_1y_2$-plane and with respect to rotations about
the $y_3$-axis. For $y\in S$, let $\nu(y)$ denote the outward unit
normal to $S$ at $y$. Note that the symmetries of $S$ imply that
$\nu(y_1,y_2,y_3)$ lies in the subspace spanned by the vectors
$(y_1,y_2,0)$ and $(0,0,y_3)$. In particular, $\nu(0,0,y_3)=(0,0,\pm
1)$ and $\nu(y_1,y_2,0)=\frac{1}{\sqrt{y_1^{2}+y_2^{2}}}(y_1,y_2,0)$.

We represent the spherical cotangent bundle of $\R^{3}$ as
\[
\U^*\R^{3}=\R^{3}\times S\subset T^{\ast}\R^3=\R^3\times\R^{3}
\]
and use coordinates $(x,y)=(x_1,x_2,x_3,y_1,y_2,y_3)$ on $T^{\ast}\R^{3}$. The contact form on $\U^*\R^{3}$
is the restriction of the Liouville $1$--form $y\cdot dx=\sum_{j=1}^{3} y_j\,dx_j$ on $T^{\ast}\R^{3}$ to $\U^*\R^{3}$.
We compute the Reeb vector field as follows.
\begin{lma}\label{reeb}
The Reeb vector field on $\U^*\R^{3}$ is
\begin{equation}\label{eq:ReebS}
R(x,y)=|y|^{-1}(\nu(y)\cdot\pa_x)=|y|^{-1}\sum \nu_j(y)\pa_{x_j}
\end{equation}
and the time $t$ flow starting at $(x,y)$ is
\begin{equation}\label{eq:ReebSflow}
\Phi_R^{t}(x,y)=\left(x+t\left(|y|^{-1}\nu(y)\right)\,,\,y\right).
\end{equation}	
\end{lma}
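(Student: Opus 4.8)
The plan is to verify Lemma~\ref{reeb} by a direct computation from the defining equations of the Reeb vector field, using the explicit description of $S$ as a convex surface and the coordinate presentation $\U^*\R^3 = \R^3 \times S \subset \R^3 \times \R^3$. Recall that the Reeb vector field $R$ of a contact form $\alpha$ is uniquely determined by the two conditions $\alpha(R) = 1$ and $\iota_R\, d\alpha = 0$. Here $\alpha$ is the restriction of $y\cdot dx = \sum_j y_j\, dx_j$ to $\U^*\R^3$, so $d\alpha$ is the restriction of $\sum_j dy_j \wedge dx_j$. First I would set up local coordinates on $\U^*\R^3$: since the $x$-directions are unconstrained and $y$ is constrained to lie on $S$, a tangent vector to $\U^*\R^3$ at $(x,y)$ has the form $(\dot x, \dot y)$ with $\dot x \in \R^3$ arbitrary and $\dot y \in T_y S$, i.e.\ $\dot y \cdot \nu(y) = 0$ (using that $\nu(y)$ is normal to $S$).

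Next I would check that the proposed vector field $R(x,y) = |y|^{-1}(\nu(y)\cdot\pa_x)$ is in fact tangent to $\U^*\R^3$: its $y$-component is zero, which trivially lies in $T_yS$, so this is immediate. Then I verify $\alpha(R) = 1$: we have $\alpha(R) = y \cdot \big(|y|^{-1}\nu(y)\big) = |y|^{-1}\, (y\cdot\nu(y))$. The key geometric input is that for a convex surface $S$ enclosing the origin, parametrized so that $\nu(y)$ is the outward unit normal, the support-function identity $y\cdot\nu(y) = |y|$ need not hold in general — rather, one uses that $\nu(y)$ points in the direction making $y\cdot\nu(y)$ equal to... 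Actually, the cleaner route: the stated formula should be read with the normalization built in, and one checks $y\cdot\nu(y) = |y|$ precisely when $\nu(y) = y/|y|$, which holds on the equator and poles by the symmetry computation given in the text but not everywhere. So more carefully, I expect $\alpha(R) = |y|^{-1}(y\cdot\nu(y))$, and the claim $\alpha(R)=1$ forces the interpretation that the correct Reeb field is $R = (y\cdot\nu(y))^{-1}(\nu(y)\cdot\pa_x)$; I would double-check against the normalization convention in the paper, but proceed with whichever makes $\alpha(R)=1$ hold identically.

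The main verification is $\iota_R\, d\alpha = 0$ on $T_{(x,y)}\U^*\R^3$. Evaluating on a tangent vector $(\dot x, \dot y)$ with $\dot y \in T_yS$: since $d\alpha = \sum_j dy_j \wedge dx_j$, we get $d\alpha(R, (\dot x,\dot y)) = \sum_j \big( R^{y_j}\dot x_j - \dot y_j R^{x_j}\big)$ where $R = (R^{x}, R^{y})$. Since $R$ has vanishing $y$-component, $R^{y_j} = 0$, so this reduces to $-\sum_j \dot y_j R^{x_j} = -\dot y \cdot R^x = -|y|^{-1}\,(\dot y \cdot \nu(y)) = 0$ because $\dot y \in T_yS$ is orthogonal to $\nu(y)$. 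This is the crux: the condition that $R^x$ is parallel to the normal $\nu(y)$ is exactly what makes $\iota_R d\alpha$ annihilate all tangent vectors to $\U^*\R^3$. This confirms the Reeb field formula~\eqref{eq:ReebS} (up to the normalization subtlety above).

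Finally, the flow formula~\eqref{eq:ReebSflow} follows by solving the ODE $\frac{d}{dt}(x(t),y(t)) = R(x(t),y(t))$: since the $y$-component of $R$ is zero, $y(t) \equiv y$ is constant, hence $\nu(y(t)) = \nu(y)$ and $|y(t)| = |y|$ are constant, so $\frac{d}{dt}x(t) = |y|^{-1}\nu(y)$ is a constant vector, giving $x(t) = x + t\,|y|^{-1}\nu(y)$. This yields $\Phi_R^t(x,y) = (x + t\,|y|^{-1}\nu(y),\, y)$ as claimed, and in particular confirms (as used in Subsection~\ref{subsec:transverse}) that the Reeb flow lines are straight lines, i.e.\ geodesics in the flat metric. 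The only real obstacle I anticipate is bookkeeping around the normalization factor $|y|^{-1}$ versus $(y\cdot\nu(y))^{-1}$ in $\alpha(R)=1$; everything else is a short linear-algebra check exploiting that $R$ has no $y$-component and that its $x$-component is normal to $S$.
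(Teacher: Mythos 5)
Your proof is correct and, if anything, more careful than the paper's. The paper argues via the standard complex structure on $T^*\R^3=\C^3$: the Reeb field must lie both in $T(\U^*\R^3)$ and in the kernel of $d\alpha$, which is the complex line through $\nu(y)$, and intersecting the two picks out the direction $-i\nu(y)=\nu(y)\cdot\pa_x$ ``up to normalization.'' Your route --- writing a tangent vector as $(\dot x,\dot y)$ with $\dot y\perp\nu(y)$, deducing $R^y=0$ and $R^x\parallel\nu(y)$ from $\iota_R\,d\alpha=0$, and then fixing the scale by $\alpha(R)=1$ --- reaches the same conclusion by elementary linear algebra and makes the normalization explicit rather than waving at it. On that last point you are right to be suspicious: $\alpha(R)=|y|^{-1}\,(y\cdot\nu(y))$, and since $S$ is a general convex surface of revolution (indeed, later in the paper it is taken to be flat near the poles, where $y\cdot\nu(y)=1<|y|$ off the axis), the correct normalization is $(y\cdot\nu(y))^{-1}$ rather than $|y|^{-1}$; the two agree only where $\nu(y)$ is radial, e.g.\ on a round sphere or at the poles and equator of the flattened $S$. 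This is a harmless slip in the lemma as stated: everything used downstream (Reeb chords project to straight segments in $\R^3$, the flow is translation in the $z$-direction after the contactomorphism, the action bound $\int_c y\cdot dx\le 2\rho_0\delta$) depends only on the direction of $R$ and on $y$ being constant along the flow, not on the precise positive scalar multiple. Your ODE argument for the flow formula is likewise fine, again with $|y|^{-1}$ replaced by $(y\cdot\nu(y))^{-1}$ in the speed.
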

\begin{pf}
If $i$ denotes the standard complex structure on $\C^{3}=\R^{3}+i\R^{3}=T^{\ast}\R^{3}$ then the Reeb field lies in the intersection of $T(\U^*\R^3)$ and the complex tangent line at $y$ containing $\nu(y)$. Thus up to normalization the Reeb field equals $-i\nu(y)=\nu(y)\cdot\pa_{x}$. The lemma follows.
\end{pf}

The spherical cotangent bundle $\U^* \R^3$ can be identified with the 1--jet space $J^1(S)=T^*S\times \R$ as follows, where we use the flat metric on $\R^{3}$ to identify vectors and covectors.
\begin{lma}\label{lma:contactomorphism}
The map $\phi\colon \U^*\R^{3}\to J^{1}(S)=T^{\ast}S\times\R$ given by
 \[
\phi(x,y)=(y,x-(x\cdot\nu)\nu,x\cdot y).
\]
is a contactomorphism
\[
\phi\colon (\U^*\R^{3},y\cdot dx)\to (J^{1}(S),dz-p\cdot dq),
\]
where $q=(q_1,q_2)$ are local coordinates on $S,$ $p=(p_1,p_2)$ give the coordinates on the fiber of the cotangent bundle and $z$ is the coordinate on $\R.$
\end{lma}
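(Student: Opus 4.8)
The plan is to verify directly that the stated map $\phi$ is a diffeomorphism and that it pulls back the standard $1$-jet contact form $dz - p\cdot dq$ to the restriction of the Liouville form $y\cdot dx$. First I would check that $\phi$ is well-defined as a map into $J^1(S)=T^*S\times\R$: the first component $y$ records the base point in $S$; the second component $x - (x\cdot\nu(y))\,\nu(y)$ is the orthogonal projection of $x$ onto $T_yS$ (using the flat metric to identify $T_yS$ with a subspace of $\R^3$), hence a cotangent vector at $y\in S$; and the third component $x\cdot y$ is the $\R$-coordinate. So $\phi$ does land in $T^*S\times\R$. For the inverse, given $(q,p,z)$ with $q\in S$, $p\in T_q^*S\subset\R^3$, and $z\in\R$, one recovers $y = q$ and then $x = p + \big(z/|y|^2\big)\,y$, since the component of $x$ along $\nu(y)$ (equivalently along $y$, as $\nu(y)$ is parallel to the radial direction only at special points—here more carefully along $y$ itself, which at a point of $S$ need not be normal, so one must instead solve $x\cdot y = z$ together with $x - (x\cdot\nu)\nu = p$). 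This is a linear system in $x$ with a unique solution as long as $y\cdot\nu(y)\neq 0$, which holds because $S$ bounds a convex region containing the origin; hence $\phi$ is a diffeomorphism.

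Next I would compute $\phi^*(dz - p\cdot dq)$. Writing the three components of $\phi$ as $Q=y$, $P = x - (x\cdot\nu)\,\nu$, $Z = x\cdot y$, we have $dZ = y\cdot dx + x\cdot dy$, and $P\cdot dQ = \big(x - (x\cdot\nu)\nu\big)\cdot dy = x\cdot dy - (x\cdot\nu)(\nu\cdot dy)$. Along $S$ the normal $\nu$ satisfies $\nu\cdot dy = 0$ as a $1$-form on $S$ (this is exactly the statement that $\nu$ is normal to $TS$), so the second term vanishes and $P\cdot dQ = x\cdot dy$. Therefore $dZ - P\cdot dQ = y\cdot dx$, which is precisely the contact form on $\U^*\R^3$. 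This shows $\phi$ is a contactomorphism.

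The one point that requires genuine care—and which I expect to be the main (mild) obstacle—is the identity $\nu\cdot dy = 0$ on $S$ and the attendant identification of $T^*S$ with a subspace of $\R^3$: one must be careful that $P = x-(x\cdot\nu)\nu$ really is the object that pairs with tangent vectors to $S$ the way the canonical coordinate $p$ does, i.e. that under the flat-metric identification the tautological form $p\cdot dq$ on $T^*S$ corresponds to the restriction of $x\cdot dy$ to the projected vector. Once that bookkeeping is set up correctly, the computation $\phi^*(dz-p\cdot dq)=y\cdot dx$ is a one-line consequence of $dZ = y\cdot dx + x\cdot dy$ and $\nu\cdot dy|_{TS}=0$. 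Invertibility likewise reduces to the nondegeneracy of $x\mapsto (x-(x\cdot\nu)\nu,\ x\cdot y)$, which follows from $y\cdot\nu(y)>0$, a consequence of convexity with the origin in the interior. Finally I would remark that this is the spherical-conormal analogue of the classical contactomorphism $S^*\R^n\cong J^1(S^{n-1})$, specialized to the (possibly non-round) sphere $S$.
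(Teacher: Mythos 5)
Your proposal is correct and follows essentially the same route as the paper: the paper's proof is exactly the computation $\phi^*(dz)=x\cdot dy+y\cdot dx$, $\phi^*(p\cdot dq)=(x-(x\cdot\nu)\nu)\cdot dy=x\cdot dy$ using $\nu\cdot dy=0$ on $S$, giving $\phi^*(dz-p\cdot dq)=y\cdot dx$. The extra verification of invertibility (via $y\cdot\nu(y)>0$ from convexity with the origin inside) is a reasonable addition that the paper leaves implicit.
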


\begin{pf}
Note that
\begin{align*}
\phi^{\ast}(dq)&=dy,\\
\phi^{\ast}(dz)&=x\cdot dy + y\cdot dx,
\end{align*}
and thus
\[
\phi^{\ast}(dz-p\cdot dq)=x\cdot dy +y\cdot dx - (x-(x\cdot\nu)\nu)\cdot dy= y\cdot dx,
\]
where we use $\nu\cdot dy=0$, which holds since $y\in S$ and $\nu$ is the normal of $S$ at $y$.
\end{pf}

\subsection{Confining holomorphic curves}
\label{subsec:confining}
Fix $\delta>0,$ let $S\subset\R^{3}$ be as above and write $\rho_0=\max\{|y|\colon y\in S\}$. Below we will measure lengths of cotangent vectors in $T^{\ast}S$ using the metric coming from the one induced on $T^{\ast}\R^{3}$ by the flat metric on $\R^{3}$. If $(q,p)\in T^{\ast}S$ then we write $|p|$ for the length of the cotangent vector measured with respect to this metric.

\begin{lma}\label{lma:smallaction}
If $K$ is any link contained in $B(\delta),$ the ball of radius $\delta$ about the origin in $\R^3,$ then
\[
\Lambda_K\subset\{(q,p,z)\in T^{\ast}S\times\R\colon |p|\le\delta\}.
\]
Moreover, if $c$ is a Reeb chord of $\Lambda_K$ then
\[
\int_c (dz-p\,dq)\le 2\rho_0\delta.
\]
\end{lma}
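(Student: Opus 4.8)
The plan is to compute everything by hand using the explicit descriptions of $\Lambda_K$, the Reeb field, and the contactomorphism $\phi$ established in the preceding lemmas, rather than any abstract argument. First I would describe the conormal lift concretely. If $K\subset B(\delta)$, a point of $\Lambda_K\subset \U^*\R^3=\R^3\times S$ has the form $(x,y)$ with $x\in K$ (so $|x|\le\delta$) and $y\in S$ the outward unit normal direction of $S$ such that $y$ annihilates $T_xK$; crucially the base point $x$ is constrained to $B(\delta)$ while $y$ ranges over (a subset of) $S$. Then I apply Lemma~\ref{lma:contactomorphism}: under $\phi$, the $T^*S$-fiber coordinate of the image is $p = x-(x\cdot\nu(y))\,\nu(y)$, the orthogonal projection of $x$ onto the tangent plane $T_yS$. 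Hence $|p|\le |x|\le\delta$, which is exactly the first claim. (One should double-check the metric normalization: $|p|$ is measured with the metric induced from the flat metric on $\R^3$, and $\phi^*(dq)=dy$, so the identification of the fiber coordinate with the Euclidean vector $x-(x\cdot\nu)\nu$ is isometric in the relevant sense — this is the one routine point to verify carefully.)

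For the action bound on a Reeb chord $c$, I would use Lemma~\ref{reeb}: the Reeb flow on $\U^*\R^3$ is $\Phi_R^t(x,y)=(x+t|y|^{-1}\nu(y),\,y)$, so a Reeb chord of $\Lambda_K$ corresponds to a value $y\in S$ (fixed along the chord) and two points $x_0,x_1\in K$ with $x_1 = x_0 + T|y|^{-1}\nu(y)$ for the chord length $T\ge 0$; since both $x_0,x_1\in B(\delta)$ we get $T|y|^{-1}|\nu(y)| = |x_1-x_0|\le 2\delta$, i.e. $T\le 2\delta|y|$. Now the action $\int_c(dz-p\,dq)$ equals $\int_c \alpha$ for the contact form $\alpha = y\cdot dx$ on $\U^*\R^3$ (using that $\phi$ is a contactomorphism carrying $y\cdot dx$ to $dz-p\,dq$), and along the Reeb flow $\alpha(R)=1$ by definition of the Reeb field, so $\int_c\alpha = T$, the chord length. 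Combining, $\int_c(dz-p\,dq) = T \le 2\delta|y| \le 2\delta\rho_0$, which is the second claim.

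The main obstacle — though it is mild — is bookkeeping about which contact form the "action" $\int_c(dz-p\,dq)$ refers to and making sure the Reeb chords of $\Lambda_K$ in the $J^1(S)$-picture match the Reeb chords in the $\U^*\R^3$-picture under $\phi$, so that the length $T$ computed from Lemma~\ref{reeb} is literally the integral $\int_c(dz-p\,dq)$. Once one notes $\phi^*(dz-p\,dq)=y\cdot dx$ and that the Reeb field of $y\cdot dx$ is $R$ with $\alpha(R)\equiv 1$, the identity $\int_c(dz-p\,dq)=\text{(Reeb-chord length)}$ is immediate and the rest is the two elementary estimates $|p|\le|x|\le\delta$ and $T\le 2\delta|y|\le 2\delta\rho_0$. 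I would also remark that $y\ne 0$ for $y\in S$ (since $0$ is interior to the convex body) so $|y|^{-1}$ is well-defined, and that $|\nu(y)|=1$.
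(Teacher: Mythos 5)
Your proposal is correct and follows essentially the same route as the paper: the first claim is the identical estimate $|p|=|x-(x\cdot\nu)\nu|\le|x|\le\delta$ via Lemma~\ref{lma:contactomorphism}, and the second claim uses the same two ingredients (a Reeb chord projects to a line segment with endpoints on $K$, hence of length at most $2\delta$, and the covector satisfies $|y|\le\rho_0$), which the paper packages as $\int_{l_c}y\cdot dx\le\int_{l_c}\rho_0\,|dx|\le 2\rho_0\delta$ rather than via the flow-time $T$. The only caveat is cosmetic: with the paper's normalization $R=|y|^{-1}\nu(y)\cdot\pa_x$ one has $\alpha(R)=|y|^{-1}(y\cdot\nu(y))$, which equals $1$ only when $\nu(y)=y/|y|$, so your identity $\int_c\alpha=T$ should be read with the correctly normalized Reeb field (or replaced by the paper's direct estimate), but this does not affect the bound.
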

\begin{pf}
By Lemma \ref{lma:contactomorphism} if $x\in K$ and $(x,y)\in \Lambda_K\subset \U^*\R^{3}$ then
\[
|p|=|x-(\nu(y)\cdot x)\nu(y)|\le|x|\le\delta.
\]
For the second statement we note that Lemma~\ref{reeb} implies that a Reeb chord $c$ of $\Lambda_K$ is a lift $(l_c,y)$ into $\U^*\R^{3}$ of a line segment $l_c$ in $\R^3$ with endpoints on $K$ and that the action of the chord is
\[
\int_{l_c} y\cdot dx\le \int_{l_c}\rho_0|dx|\le 2\rho_0\delta.
\]
\end{pf}

As a consequence of Lemma \ref{lma:smallaction} we can confine holomorphic curves with boundary on $\Lambda_K.$
As in Subsection \ref{subsec:conormal}, $\pi_\C:J^1(S)\to T^*S$ will denote the projection map and
$\overline\Lambda_K=\pi_\C(\Lambda_K)$.

\begin{lma}\label{lma:confine}
Let $J$ be an almost complex structure on $T^*S$ that is compatible with the symplectic form on $T^*S.$ Fix $\delta_0>0$ a constant. Then there exists $0<\delta<\delta_0$ such that if $K$ is a link in $B(\delta)$ then any $J$-holomorphic disk with boundary on $\overline\Lambda_K$ and one positive puncture lies in $\{(q,p)\in T^{\ast}S\colon |p|<2\delta_0\}$.
\end{lma}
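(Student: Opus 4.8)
The plan is to use a monotonicity argument for $J$-holomorphic curves together with the action estimate from Lemma~\ref{lma:smallaction}. First I would recall the standard monotonicity lemma: since $J$ is compatible with the symplectic form $\omega=d\lambda$ on $T^*S$, there is a constant $\hbar>0$ and a radius $r_0>0$ (depending only on $J$ and on the fixed compact region $\{|p|\le\delta_0\}$ of $T^*S$) such that any $J$-holomorphic curve $u$ passing through a point $x$ and with boundary staying outside the ball $B(x,r)$ satisfies $\operatorname{Area}(u\cap B(x,r))\ge \hbar r^2$ for all $r\le r_0$. The total symplectic area of a holomorphic disk $u$ with one positive puncture at a Reeb chord $c$ and boundary on $\overline\Lambda_K$ is, up to the usual Stokes-type computation, bounded by the action $\int_c(dz-p\,dq)$ of the chord; by Lemma~\ref{lma:smallaction} this is at most $2\rho_0\delta$. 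Choosing $\delta$ small enough that $2\rho_0\delta < \hbar r_0^2$ (and $\delta<\delta_0$), any such disk has area less than $\hbar r_0^2$.

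Next I would run the contradiction: suppose the image of $u$ exits $\{|p|<2\delta_0\}$, i.e.\ reaches some point $x$ with $|p(x)|\ge 2\delta_0$. By Lemma~\ref{lma:smallaction} the boundary $\overline\Lambda_K$ lies inside $\{|p|\le\delta\}\subset\{|p|\le\delta_0\}$, so the entire boundary of $u$ is at distance at least $\delta_0$ from $x$ in the $T^*S$-metric. Shrinking $\delta$ further if necessary so that $\min(\delta_0,r_0)=:r_1$ still satisfies $2\rho_0\delta<\hbar r_1^2$, monotonicity applied at $x$ with radius $r_1$ forces $\operatorname{Area}(u)\ge\hbar r_1^2 > 2\rho_0\delta\ge\operatorname{Area}(u)$, a contradiction. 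Hence $u$ must be contained in $\{|p|<2\delta_0\}$, and this $\delta$ works uniformly for all links $K\subset B(\delta)$, since the monotonicity constants depend only on $J$ and the region, not on $K$ or $\Lambda_K$.

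The one point requiring care is the area/action identity: for a disk with boundary on $\overline\Lambda_K$, which need not be exact Lagrangian in $T^*S$ a priori, one uses that $\Lambda_K\subset J^1(S)$ is Legendrian (so $\overline\Lambda_K$ is the projection of a Legendrian, hence an exact Lagrangian immersion with primitive $z\circ\Lambda_K$), and the positive puncture contributes exactly the chord action while the negative punctures contribute nonnegatively; this is the standard energy bound in Legendrian contact homology as set up in \cite{EkholmEtnyreSullivan07}. The main obstacle is thus not conceptual but technical: making sure the monotonicity constant $r_0$ can be chosen uniformly over the relevant compact piece of $T^*S$ and independently of the (varying) boundary condition $\overline\Lambda_K$, and handling the boundary version of monotonicity near $\partial u$ — but since all boundaries lie in the fixed small region $\{|p|\le\delta_0\}$ while the escaping point $x$ is bounded away from it, the interior monotonicity lemma suffices and no delicate boundary analysis is needed.
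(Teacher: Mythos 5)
Your proposal is correct and follows essentially the same route as the paper: bound the area of the disk by the chord action $2\rho_0\delta$ via Stokes' theorem and Lemma~\ref{lma:smallaction}, note the boundary stays in $\{|p|\le\delta\}$, and invoke the interior monotonicity lemma to force area at least a constant times $\delta_0^2$ if the disk escapes $\{|p|<2\delta_0\}$, then choose $\delta$ small enough to contradict this. The paper's proof is a condensed version of exactly this argument, citing Proposition 4.3.1 of \cite{AudinLafontaine94} for monotonicity.
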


\begin{pf}
Consider $0<\delta<\delta_0$. By Stokes' theorem and
Lemma \ref{lma:smallaction}, the area of a disk $u$ as described is bounded above by $2\rho_0\delta$ and its boundary is contained in the region where $|p|\le \delta$. By monotonicity (see for example Proposition 4.3.1
\cite{AudinLafontaine94}), there exists a constant $C_0$ (depending only on $J$) such that if $u$ leaves the region where $|p|\le 2\delta_0$ then the area of $u$ is at least $C_0\delta_0^{2}$. If we now take $\delta< C_0\delta^{2}_0/2\rho_0$, the lemma follows.
\end{pf}

\subsection{A contact form on $\R^{3}$ and its spherical cotangent lifts}
\label{subsec:lift}
Fix the contact form $\alpha_0=dx_3-x_2\,dx_1+x_1\,dx_2$ on $\R^{3}$ and write, as in Section \ref{sec:intro}, $\xi_0=\ker(\alpha_0)$ for the corresponding contact structure. Note that $\alpha_0$ is invariant under rotations in the $x_1x_2$-plane and that the diffeomorphism
\[
(x_1,x_2,x_3)\mapsto\left(\tfrac{1}{2}x_1\,,\,\tfrac{1}{2}x_2\,,\,x_3-\tfrac{1}{2}	 x_1x_2\right)
\]
gives a contactomorphism between $\alpha_0$ and the standard contact form on $\R^{3}$, $dx_3-x_2\,dx_1$.

If $v\in\R^{3}$ is a non-zero vector, we denote the two open half rays determined by $v$ as follows:
\[
\R_\pm\cdot v=\{x\in\R^{3}\colon x=tv,\,\,\pm t>0\}.
\]
The \emph{positive and negative spherical lifts} of $\alpha_0$ are
\begin{equation}\label{eq:sphlift}
H_{+}=\left\{(x,y)\in \U^*\R^{3}\colon y\in \R_+\cdot\alpha_0(x)\right\}
\text{ and }
H_{-}=\left\{(x,y)\in \U^*\R^{3}\colon y\in \R_-\cdot\alpha_0(x)\right\},
\end{equation}
respectively. At all points $x$ on a transverse link $K,$
$\alpha_0$ fails to annihilate the tangent space $T_xK$.
Thus we have the following immediate result.
\begin{lma}\label{lma:disjoint1}
If $K$ is transverse to $\xi_0$ then the conormal lift $\Lambda_K$ of $K$ is disjoint
from $H_{\pm}.$
\end{lma}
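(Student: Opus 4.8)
The plan is to argue by contradiction, directly from the definitions of $\Lambda_K$ and $H_\pm$, keeping careful track of the flat-metric identification of vectors with covectors on $\R^3$ that is in force throughout Subsection~\ref{defS}. Suppose some point $(x,y)\in\U^*\R^3=\R^3\times S$ lies in $\Lambda_K\cap H_\pm$. First I would record what membership in each set says about $(x,y)$. From $(x,y)\in\Lambda_K$: $x\in K$, and the covector dual to $y$ annihilates $T_xK$, i.e.\ $\langle y,v\rangle=0$ for every $v\in T_xK$, where $\langle\cdot,\cdot\rangle$ is the Euclidean inner product. From $(x,y)\in H_\pm$: by the definition in \eqref{eq:sphlift}, $y=t\,\alpha_0(x)$ for some real $t\neq 0$ (with $\pm t>0$), where $\alpha_0(x)$ is the vector metric-dual to the covector $(\alpha_0)|_x$.

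Next I would combine the two conditions. Substituting $y=t\,\alpha_0(x)$ into the conormal condition gives $t\,\langle\alpha_0(x),v\rangle=0$ for all $v\in T_xK$, and since $t\neq 0$ this forces $\langle\alpha_0(x),v\rangle=(\alpha_0)|_x(v)=0$ for every $v\in T_xK$; that is, $T_xK\subset\ker\bigl((\alpha_0)|_x\bigr)=(\xi_0)_x$. But $K$ transverse to $\xi_0$ at $x$ means precisely that $T_xK$ is not contained in $(\xi_0)_x$ (equivalently, $(\alpha_0)|_x$ does not vanish on $T_xK$), which is a contradiction. Hence $\Lambda_K\cap H_\pm=\varnothing$, as the remark preceding the statement indicates, and the same computation handles $H_+$ and $H_-$ simultaneously since only $t\neq 0$ is used.

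I do not expect a genuine obstacle here — this is one of the ``immediate'' lemmas. The only point requiring care is the bookkeeping between covectors on $\R^3$ and vectors in $\R^3$ under the flat metric, so that the conormal condition ``$y$ kills $T_xK$'' and the half-ray condition ``$y\in\R_\pm\cdot\alpha_0(x)$'' are genuinely being compared in the same space; once that identification is spelled out, the lemma follows in a line.
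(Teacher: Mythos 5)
Your argument is correct and is exactly the reasoning the paper uses: a point of $\Lambda_K\cap H_\pm$ would force $\alpha_0$ to annihilate $T_xK$, contradicting transversality, which is why the paper states the lemma as an immediate consequence of that observation. The careful bookkeeping of the metric identification is a welcome addition but does not change the substance.
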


\subsection{Shrinking transverse links}\label{transiso}
The following straightforward lemma reduces the study of transverse links in $\R^{3}$ with its standard contact structure to the study of  transverse links lying in an arbitrary small fixed neighborhood of the origin.
Let $B^{d}(r)$ denote the
closed $d$-dimensional ball of radius $r$ around the origin
and let $B(r) = B^3(r).$

\begin{lma}\label{lma:shrink}
Fix $\delta >0.$
Let $K(s)$, $s\in B^{d}(1),$ be a continuous family of transverse  links in $(\R^{3},\xi_0)$ such that $K(s)\subset B(\delta)$ for $s\in\pa B^{d}(1).$
Then there is a homotopy $K(s,t)$, $0\le t\le 1$, with $K(s,t)=K(s)$
if $s\in\pa B^{d}(1)$, and such that $K(s,0)=K(s)$ and $K(s,1)\subset
B(\delta)$ for each $s\in B^{d}(1)$.
In particular, the space of transverse  links in
$\R^{3}$ is weakly homotopy equivalent to the space of transverse
links in $B(\delta)$.
\end{lma}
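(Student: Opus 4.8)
The plan is to construct the homotopy $K(s,t)$ by explicitly rescaling toward the origin, using the rotational symmetry of $\alpha_0$ in the $x_1x_2$-plane together with the fact that the standard contact structure is, up to contactomorphism, invariant under an appropriate family of scalings. First I would recall that for the standard contact form $dx_3 - x_2\,dx_1$ on $\R^3$ the map
\[
g_\epsilon\colon (x_1,x_2,x_3)\mapsto (\epsilon x_1,\epsilon x_2,\epsilon^2 x_3)
\]
pulls back the contact form to $\epsilon^2(dx_3-x_2\,dx_1)$, hence is a contactomorphism; pulling this back through the contactomorphism of Subsection~\ref{subsec:lift} between $\alpha_0$ and $dx_3-x_2\,dx_1$ gives a corresponding $1$-parameter family $\{g_\epsilon\}_{\epsilon\in(0,1]}$ of contactomorphisms of $(\R^3,\xi_0)$ with $g_1=\id$ and $g_\epsilon(B(r))=B(\epsilon r)$ (after harmlessly enlarging $r$ by a fixed constant coming from the change of variables). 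Since each $g_\epsilon$ is a contactomorphism, $g_\epsilon(K)$ is transverse to $\xi_0$ whenever $K$ is.

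Next I would turn the family $\{g_\epsilon\}$ into the required homotopy. For a fixed $\delta>0$, choose for each $s\in B^d(1)$ a scaling parameter $\epsilon(s)\in(0,1]$ depending continuously on $s$, equal to $1$ for $s\in\pa B^d(1)$ (which is allowed since $K(s)\subset B(\delta)$ there, so no shrinking is needed), and small enough that $g_{\epsilon(s)}(K(s))\subset B(\delta)$; such an $\epsilon(s)$ exists because $K(s)$ is compact and the family $K(s)$ is continuous, so $\sup_s \max_{p\in K(s)}|p|<\infty$, and then by compactness of $B^d(1)$ one can pick $\epsilon(s)$ continuous (e.g.\ take $\epsilon(s) = \min\{1, \delta / (C\cdot\max_{p\in K(s)}|p|)\}$ with $C$ the distortion constant of the change of coordinates, smoothed near $\pa B^d(1)$ if necessary so it is continuous and equals $1$ on the boundary). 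Then set $K(s,t) = g_{(1-t)+t\epsilon(s)}(K(s))$. This is continuous in $(s,t)$, satisfies $K(s,0)=g_1(K(s))=K(s)$, $K(s,1)=g_{\epsilon(s)}(K(s))\subset B(\delta)$, is a transverse link for all $(s,t)$ since each $g_\epsilon$ is a contactomorphism, and is constant equal to $K(s)$ when $s\in\pa B^d(1)$ because there $\epsilon(s)=1$.

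Finally I would deduce the weak homotopy equivalence. Let $\mathcal{T}$ and $\mathcal{T}_\delta$ denote the spaces of transverse links in $\R^3$ and in $B(\delta)$ respectively, with $\iota\colon\mathcal{T}_\delta\hookrightarrow\mathcal{T}$ the inclusion. To show $\iota$ is a weak homotopy equivalence it suffices to show that for every $d\ge 0$ and every map $f\colon (B^d(1),\pa B^d(1))\to(\mathcal{T},\mathcal{T}_\delta)$, the pair $(f, f|_{\pa B^d(1)})$ is homotopic rel $\pa B^d(1)$ to a map into $(\mathcal{T}_\delta,\mathcal{T}_\delta)$; this is exactly what the homotopy $K(s,t)$ constructed above provides (viewing $f$ as the family $K(s)$). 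This is the standard criterion for $\iota_*$ to be an isomorphism on all homotopy groups. The only mildly delicate point — which is the main thing to be careful about — is the continuous choice of $\epsilon(s)$ that simultaneously shrinks every $K(s)$ into $B(\delta)$ and is pinned to $1$ on $\pa B^d(1)$; once one notes that $s\mapsto\max_{p\in K(s)}|p|$ is continuous and finite on the compact set $B^d(1)$ this is routine, and the rest of the argument is purely formal.
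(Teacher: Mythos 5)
Your proposal is correct and follows essentially the same route as the paper: the key scaling map $g_\epsilon(x_1,x_2,x_3)=(\epsilon x_1,\epsilon x_2,\epsilon^2 x_3)$ is exactly the paper's $F_c$ with $c=\epsilon^2$ (and in fact $F_c^{\ast}\alpha_0=c\,\alpha_0$ directly, so the detour through the contactomorphism with $dx_3-x_2\,dx_1$ and the attendant distortion constant is unnecessary), and your continuous, boundary-pinned choice of $\epsilon(s)$ followed by $K(s,t)=g_{(1-t)+t\epsilon(s)}(K(s))$ is the paper's construction with a collar-neighborhood interpolation.
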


\begin{pf}
Note that if $K$ is a transverse link in
$(\R^{3},\alpha)$ then so is $F_c(K)$ where
\[
F_c(x_1,x_2,x_3)=(\sqrt{c}\,x_1,\sqrt{c}\,x_2,c x_3)
\]
for $c>0.$
Choose $\epsilon_1, \epsilon_2 >0$ sufficiently small so that
$K(s) \subset B(\epsilon_1^{-1} \delta)$ for all $s \in B^d(1)$
and
$K(s) \subset B(\delta)$ for all
$s \in (B^d(1) \setminus B^d(1-\epsilon_2)).$
Choose any smooth
$\varepsilon: B^d(1) \rightarrow [\epsilon_1, 1]$
such that
\[
\varepsilon(s) = 1 \,\,\, \mbox{if} \,\, s \in \pa B^d(1),
\quad \mbox{and} \,\,\,
\varepsilon(s) = \epsilon_1 \,\,\, \mbox{if} \,\, s \in B^d(1 - \epsilon_2).
\]
Then $K(s,t)=F_{(1-t)+t\varepsilon(s)}(K(s))$ is a homotopy with the required properties.
\end{pf}
Using Lemma \ref{lma:shrink} in conjunction with Lemma~\ref{lma:confine}, we
can restrict our attention to a small neighborhood of the zero section
in $T^*S$ when counting holomorphic curves with boundary on the
conormal lift of transverse knots.

\subsection{Almost complex structures}
\label{ssec:acs}
We choose  $S\subset \R^{3}$ as in Subsection~\ref{defS} with the additional requirement that $S$ is flat near the north and south poles. More precisely, for some fixed $\delta_0>0$ we require
that
\begin{align}\label{eq:shapeofS}
S \cap\left\{y\in\R^{3}\colon \sqrt{y_1^{2}+y_2^{2}}\le 3\delta_0\right\}
=\left\{(y_1,y_2,\pm 1)\in\R^{3}\colon \sqrt{y_1^{2}+y_2^{2}}\le 3\delta_0\right\}.
\end{align}
For $0 < k <3$, write
\begin{align*}
E_{k\delta_0}=
S \cap\left\{y\in\R^{3}\colon \sqrt{y_1^{2}+y_2^{2}}\le k\delta_0\right\}
=\left\{(y_1,y_2,\pm 1)\in\R^{3}\colon \sqrt{y_1^{2}+y_2^{2}}\le k\delta_0\right\},
\end{align*}
and let $H_{\pm}(k\delta_0)$ denote the intersection $H_{\pm}\cap (T^{\ast}E_{k\delta_0} \times\R).$ Note that the metric on $S$ is flat in $E_{3\delta_0}$ and that the almost complex structure induced by the metric agrees with the standard integrable complex structure $J_0$ on $T^{\ast}E_{3\delta_0}\subset\C^{2}$.

Let $K$ be a transverse link in the ball $B(\delta)$ and $\Lambda_K$ its conormal lift. As usual, let $\pi_\C\colon J^{1}(S)\to T^{\ast}S$ denote the projection, and write
$\overline{H}_\pm = \pi_\C({H}_\pm)$ and $\overline{H}_\pm(k\delta_0) = \pi_\C(H_{\pm}(k\delta_0)).$

\begin{lma}\label{lma:subvariety}
The spherical lifts $H_{\pm}$ of $\alpha_0$ intersect the subset $\pa(T^{\ast}E_{2\delta_0}\times\R)\subset J^{1}(S)$ transversely. Moreover, $H_{\pm}(2\delta_0)$ is invariant under the Reeb flow and its projection
${\overline H}_{\pm}(2\delta_0) \subset T^{\ast}E_{2\delta_0}$ is a smooth $J_0$-complex subvariety.
\end{lma}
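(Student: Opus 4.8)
The plan is to verify the three claims in Lemma~\ref{lma:subvariety} by direct computation using the explicit description of $H_\pm$ from equation~\eqref{eq:sphlift}, the contactomorphism $\phi$ of Lemma~\ref{lma:contactomorphism}, and the flatness condition~\eqref{eq:shapeofS}. First I would unwind what $H_\pm$ looks like in the $(x,y)$ coordinates on $\U^*\R^3 = \R^3 \times S$: a point $(x,y)$ lies in $H_\pm$ exactly when $y$ is a positive (resp.\ negative) multiple of $\alpha_0(x) = (-x_2, x_1, 1)$ (viewing the covector as a vector via the flat metric), i.e.\ when $y \in S$ and $y = |y|\,|\alpha_0(x)|^{-1}(-x_2,x_1,1)$. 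Over the flat polar caps $E_{3\delta_0}$, where $S = \{(y_1,y_2,\pm1) : \sqrt{y_1^2+y_2^2} \le 3\delta_0\}$, this condition pins down $y = (-x_2, x_1, 1)$ on the northern cap (and $y = (x_2,-x_1,-1)$ on the southern cap for $H_-$), so $H_\pm$ over $E_{2\delta_0}$ is cut out by the equations $y_3 = \pm 1$, $y_1 = \mp x_2$, $y_2 = \pm x_1$, together with the inequality $\sqrt{x_1^2+x_2^2} \le 2\delta_0$. For the transversality claim at $\pa(T^*E_{2\delta_0}\times\R)$, I would note that $\pa(T^*E_{2\delta_0}\times\R)$ is defined (over the cap) by $x_1^2 + x_2^2 = 4\delta_0^2$ and check that the differential of this function is nonzero along $H_\pm$ and not a combination of the differentials defining $H_\pm$; this is immediate since on $H_\pm$ we can freely vary $x_3$ and the radial $x_1x_2$-direction independently, so $H_\pm$ is a $2$-dimensional manifold meeting the codimension-one wall cleanly.

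Next I would establish Reeb-flow invariance of $H_\pm(2\delta_0)$. By Lemma~\ref{reeb}, the Reeb flow is $\Phi_R^t(x,y) = (x + t|y|^{-1}\nu(y), y)$, and over the flat caps $\nu(y) = (0,0,\pm1)$ is constant with $|y| = 1$, so the flow simply translates $x_3 \mapsto x_3 \pm t$ while fixing $x_1, x_2, y$. Since the defining equations of $H_\pm(2\delta_0)$ listed above do not involve $x_3$, the set is preserved; and the radial bound $\sqrt{x_1^2+x_2^2}\le 2\delta_0$ is likewise preserved because $x_1,x_2$ are unchanged. (This is also consistent with the general discussion in Subsection~\ref{subsec:lift}: $H_\pm$ are by construction foliated by Reeb chords of the geodesic-generating Reeb field, here the field whose flow lines are the vertical lines in the $x_3$-direction.) Hence $\overline{H}_\pm(2\delta_0) = \pi_\C(H_\pm(2\delta_0))$ makes sense as the quotient by this flow, i.e.\ as the image in $T^*E_{2\delta_0}$ obtained by dropping the $x_3$ (equivalently the $z$) coordinate.

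Finally, for the complex-subvariety claim I would pass to the coordinates $(q,p)$ on $T^*S$ via $\phi$ from Lemma~\ref{lma:contactomorphism}, namely $q = y$ and $p = x - (x\cdot\nu)\nu$, and restrict to the northern flat cap where $\nu = (0,0,1)$, so $q = (y_1,y_2)$ and $p = (x_1,x_2)$ and the ambient identification $T^*E_{3\delta_0}\subset\C^2$ has complex structure $J_0$ sending $\pa_{q_j}\mapsto \pa_{p_j}$. In these coordinates $\overline{H}_+(2\delta_0)$ is the locus $\{(q_1,q_2,p_1,p_2) : q_1 = -p_2,\ q_2 = p_1,\ p_1^2+p_2^2 \le 4\delta_0^2\}$ (with the evident sign-swapped version on the southern cap and for $\overline H_-$). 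Writing the two defining functions as $f_1 = q_1 + p_2$ and $f_2 = q_2 - p_1$, one checks $df_1\circ J_0 = \mp df_2$, so the zero locus is $J_0$-invariant in its tangent spaces, hence a $J_0$-complex subvariety; smoothness is clear since $f_1, f_2$ have everywhere-independent differentials. The main obstacle here is not any single computation but organizing the bookkeeping cleanly: one must handle the two caps and the two signs $\pm$ uniformly, keep straight the metric identification of covectors with vectors that underlies both the formula $\alpha_0(x) = (-x_2,x_1,1)$ and the projection $p = x - (x\cdot\nu)\nu$, and confirm that the ``flatness of $S$'' hypothesis~\eqref{eq:shapeofS} over a neighborhood strictly larger than $E_{2\delta_0}$ is what guarantees both that the $(q,p)$-coordinates are the honest Euclidean ones near $\overline H_\pm(2\delta_0)$ and that $J$ agrees with $J_0$ there, so that ``$J_0$-complex subvariety'' is a meaningful and correct assertion.
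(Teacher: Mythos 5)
Your proof is correct and follows essentially the same route as the paper's: both compute $H_{\pm}(2\delta_0)$ explicitly over the flat polar caps in the $(q,p,z)$ coordinates of Lemma~\ref{lma:contactomorphism} and then read off transversality, Reeb-flow invariance, and the identification of $\overline{H}_{\pm}(2\delta_0)$ with a complex line in $\C^{2}$. (Two immaterial slips: $H_{\pm}$ is $3$-dimensional, not $2$-dimensional, since $x_3$ is a third free parameter; and on the flat cap $|y|=\sqrt{y_1^{2}+y_2^{2}+1}$ rather than $1$, though the Reeb flow is still pure translation in $x_3$, which is all your argument uses.)
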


\begin{pf}
By Formula~\eqref{eq:shapeofS}, the normal vector to $E_{2\delta_0}$ is $\nu = (0,0,1).$
By the definition of the contact form $\alpha_0$ and
Lemma~\ref{lma:contactomorphism},
\begin{equation}\label{eq:localH}
\begin{aligned}
H_{\pm}(2 \delta_0)  =
\bigl\{ (q,p,z)\in T^{\ast}S\times\R\colon &  (q,p,z)=\left(\pm(-x_2,x_1,1)\,,\,(x_1,x_2,0)\,,\,x_3\right)\\
& \mbox{ and }x_1^{2}+x_2^{2}\le 4\delta_0^2\, \bigr\}
\end{aligned}
\end{equation}
(see the proof of Lemma \ref{lma:disjoint2} for a parameterized version).
This is clearly transverse to $\pa(T^{\ast}E_{2\delta_0}\times\R)$ and invariant under the Reeb flow (which is just translation in the $z$-direction). Furthermore, under the identification $((u_1,u_2,\pm 1),(v_1,v_2,0))\mapsto(u_1+iv_1,u_2+iv_2)\in\C^{2}$, $\overline{H}_{\pm}(2 \delta_0)$ corresponds to the complex line
\[
(u_1+iv_1,u_2+iv_2)=(i\zeta,\zeta),
\]
where $\zeta=x_1+ix_2$.
\end{pf}

Fix $\delta\in (0,\delta_0)$ so that Lemma \ref{lma:confine} holds. As pointed out in Subsection \ref{transiso},  Lemma
\ref{lma:shrink} implies that when studying the isotopy classification of links in
$\R^{3}$ which are transverse to $\alpha$, it is no restriction to
assume that all such links are contained in  $B(\delta)$ and that all
isotopies are through links inside $B(\delta)$. We thus make this
assumption throughout the rest of the paper.

\begin{lma}  \label{lma:disjoint2}
Let $K$ be a transverse link (by our standing assumption $K\subset B(\delta)$). Then
the sets $\overline{\Lambda}_K$ and $\overline{H}_{\pm}$ are disjoint in $T^{\ast}S$.
In addition,
$\overline{H}_{\pm} \setminus \overline{H}_{\pm}(2 \delta_0)$
does not intersect any  $J$-holomorphic disk with boundary on
$\overline{\Lambda}_K$ and one positive puncture.
\end{lma}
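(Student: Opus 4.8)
The plan is to prove the two assertions in Lemma~\ref{lma:disjoint2} separately, reducing both to facts that have essentially already been assembled. For the first claim, disjointness of $\overline\Lambda_K$ and $\overline H_\pm$ in $T^\ast S$, I would first note that by Lemma~\ref{lma:disjoint1} the conormal lift $\Lambda_K$ is disjoint from $H_\pm$ in $\U^\ast\R^3 = J^1(S)$; the only danger in passing to the front/Lagrangian projection $\pi_\C$ is that two points of $\Lambda_K\cup H_\pm$ lying over the same point of $T^\ast S$ but at different heights $z$ could collide. So the content is really: if $(q,p,z)\in H_\pm$ and $(q,p,z')\in\Lambda_K$ then $(q,p)$ is not a genuine intersection in $T^\ast S$ — but of course it is, so I must instead argue that no such pair exists, i.e.\ that the $(q,p)$-coordinates of $H_\pm$ and of $\Lambda_K$ are already disjoint. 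Using Lemma~\ref{lma:contactomorphism}, a point of $H_\pm$ has $T^\ast S$-coordinates $(q,p) = (y, x-(x\cdot\nu)\nu)$ with $y\in\R_\pm\cdot\alpha_0(x)$, while a point of $\Lambda_K$ has $(q,p)=(y,x-(x\cdot\nu)\nu)$ with $x\in K$ and $y\perp T_xK$. If these agreed then $y$ would be a positive (resp.\ negative) multiple of $\alpha_0(x)$ \emph{and} conormal to $K$ at $x$, contradicting transversality of $K$ to $\xi_0=\ker\alpha_0$ exactly as in the proof of Lemma~\ref{lma:disjoint1}. (One subtlety: distinct $x,x'\in K$ could in principle give the same $q=y$ and same $p$; but $q=y$ already determines the fiber $S^*_{pr}$, and then $p = x-(x\cdot\nu)\nu$ is the orthogonal projection of $x$ off the line $\R\nu(y)$, so $x$ and $x'$ differ only by a multiple of $\nu(y)=y/|y|$; since both lie on $K\subset B(\delta)$ and $y$ is conormal to $K$ at both, one checks this still forces the same contradiction with transversality, or one simply invokes that $\Lambda_K$ is embedded and disjoint from $H_\pm$ together with the fact that $z=x\cdot y$ is then also forced to agree.)

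For the second assertion I would use the confinement already established. By our standing assumption $K\subset B(\delta)$ with $\delta\in(0,\delta_0)$ chosen so that Lemma~\ref{lma:confine} applies (with the $\delta_0$ there taken to be our $\delta_0$), every $J$-holomorphic disk $u$ with boundary on $\overline\Lambda_K$ and one positive puncture satisfies $u\subset\{(q,p)\in T^\ast S\colon |p|<2\delta_0\}$. On the other hand, by~\eqref{eq:localH} a point of $\overline H_\pm(2\delta_0)$ has $|p|^2 = x_1^2+x_2^2\le 4\delta_0^2$, and I claim more generally that any point of $\overline H_\pm$ lying over $E_{2\delta_0}$ \emph{is} in $\overline H_\pm(2\delta_0)$ by definition, while any point of $\overline H_\pm\setminus\overline H_\pm(2\delta_0)$ lies over $S\setminus E_{2\delta_0}$, i.e.\ its base point $y$ satisfies $\sqrt{y_1^2+y_2^2}>2\delta_0$. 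Translating through Lemma~\ref{lma:contactomorphism}: a point of $\overline H_\pm$ with base $y$ has $y\in\R_\pm\cdot\alpha_0(x)$, and since $\alpha_0(x)=(-x_2,x_1,1)$ (as a covector, identified with a vector by the flat metric), the direction of $y$ is that of $(-x_2,x_1,1)$, so $\sqrt{y_1^2+y_2^2}/|y_3| = \sqrt{x_1^2+x_2^2} = |p|$ for that point. Hence a point of $\overline H_\pm\setminus\overline H_\pm(2\delta_0)$ has $|p|>2\delta_0$, and therefore cannot lie on the image of $u$. I would write this last computation out carefully as the crux of the argument; a parameterized version of~\eqref{eq:localH} valid on all of $\overline H_\pm$ (not just over $E_{2\delta_0}$) is the cleanest way to present it, and it is exactly the ``parameterized version'' alluded to parenthetically after~\eqref{eq:localH}.

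The main obstacle I anticipate is purely bookkeeping rather than conceptual: matching up the various constants and the geometry of $S$ outside the flat polar caps. Specifically, I need that the ``radial'' coordinate $|p|$ on $\overline H_\pm$ is monotone in, and controls, the polar angle of the base point $y\in S$, so that ``$|p|\le 2\delta_0$'' genuinely confines the disk to lie over $E_{2\delta_0}$ where $\overline H_\pm(2\delta_0)$ accounts for all of $\overline H_\pm$. Over the flat caps $E_{3\delta_0}$ this is the exact identity $|p|=\sqrt{y_1^2+y_2^2}$ from~\eqref{eq:localH}; the point is that the relation $y\in\R_\pm\cdot\alpha_0(x)$ together with $\alpha_0(x)=(-x_2,x_1,1)$ forces, for \emph{every} point of $\overline H_\pm$, that $\sqrt{y_1^2+y_2^2}$ is comparable to $|p|$, and in particular that $|p|<2\delta_0$ implies $\sqrt{y_1^2+y_2^2}<2\delta_0$ up to the normalization $|y|$ — here one uses that $|y|$ is bounded below on $S$ and that the flatness~\eqref{eq:shapeofS} pins down $\nu=(0,0,1)$ precisely on $E_{3\delta_0}$. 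Once this comparison is in hand, both assertions of the lemma fall out immediately from Lemmas~\ref{lma:disjoint1}, \ref{lma:confine} and~\ref{lma:subvariety}.
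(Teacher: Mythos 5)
Your treatment of the first assertion has a genuine gap. You argue that a common point of $\overline{H}_{\pm}$ and $\overline{\Lambda}_K$ would force a single $y$ to be simultaneously in $\R_\pm\cdot\alpha_0(x)$ and conormal to $K$ at $x$, contradicting transversality --- but this presumes the two preimage points in $J^1(S)$ lie over the \emph{same} $x\in\R^3$. They need not: by Lemmas~\ref{reeb} and~\ref{lma:contactomorphism} the fiber of $\pi_\C$ over $(q,p)$ is exactly a Reeb orbit $\{(x'+t\nu(y),y)\}_{t\in\R}$, so the point of $H_\pm$ sits over some $x=x'+t\nu(y)$ with only $x'\in K$, and then $y\in\R_\pm\cdot\alpha_0(x)$ while $y\perp T_{x'}K$; since $\alpha_0(x)=\alpha_0(x')+t(-\nu_2,\nu_1,0)$, transversality at $x'$ yields no contradiction unless $\nu(y)=(0,0,\pm 1)$. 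The subtlety you flag (two points of $K$ over the same $(q,p)$) is not the relevant one, and your fallback that ``$z=x\cdot y$ is then also forced to agree'' is false: since $S$ bounds a convex body containing the origin, $y\cdot\nu(y)>0$, so the two $z$-coordinates differ by $t\,(y\cdot\nu(y))\neq 0$ whenever $t\neq 0$. The paper's resolution is to split: over the flat caps, $H_\pm(2\delta_0)$ is a union of Reeb orbits (Lemma~\ref{lma:subvariety}), so a $\pi_\C$-fiber meeting both $\Lambda_K$ and $H_\pm(2\delta_0)$ would put a point of $\Lambda_K$ on $H_\pm(2\delta_0)$, contradicting Lemma~\ref{lma:disjoint1}; away from the caps, the $|p|$-estimate below (together with $\overline{\Lambda}_K\subset\{|p|\le\delta\}$ from Lemma~\ref{lma:smallaction}) is what gives disjointness. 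Your closing sentence invoking Lemmas~\ref{lma:disjoint1}, \ref{lma:confine} and~\ref{lma:subvariety} is the correct summary, but the argument in your first paragraph does not prove the first assertion.

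For the second assertion your route coincides with the paper's (confine the disks to $\{|p|<2\delta_0\}$ via Lemmas~\ref{lma:smallaction} and~\ref{lma:confine}, then show $|p|>2\delta_0$ on $\overline{H}_{\pm}\setminus\overline{H}_{\pm}(2\delta_0)$), but the ``comparability'' you defer must be an exact inequality, not an estimate up to constants: with the same $2\delta_0$ on both sides, a two-sided comparison with an unknown constant proves nothing. The paper parameterizes all of $\overline{H}_\pm$ by $x\in\R^3$, with $q(x)=b(-x_2,x_1,1)$ and $\nu(q(x))=(-ax_2,ax_1,\sqrt{1-a^2(x_1^2+x_2^2)})$, and computes $|p(x)|^2=(x_1^2+x_2^2)(1+a^2x_3^2)\ge x_1^2+x_2^2$; since the flat caps at height $\pm 1$ and convexity force $S\subset\{|y_3|\le 1\}$, hence $b\le 1$, a base point outside $E_{2\delta_0}$ gives $\sqrt{x_1^2+x_2^2}\ge b\sqrt{x_1^2+x_2^2}>2\delta_0$ and therefore $|p(x)|>2\delta_0$. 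Writing this computation out is exactly the missing crux you identify; once it is in place both assertions follow as above.
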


\begin{proof}
Lemmas \ref{lma:disjoint1} and  \ref{lma:subvariety} imply that
$\overline{H}_\pm(2 \delta_0)$ and $\overline{\Lambda}_K$
are disjoint, for if $H_\pm(2 \delta_0)$ intersected a Reeb flow line emanating from
$\Lambda_K,$ then it would intersect $\Lambda_K$
itself since ${H}^\pm(2 \delta_0)$
is foliated by Reeb flow lines.

Lemmas~\ref{lma:smallaction} and~\ref{lma:confine} imply that any holomorphic curve with boundary on $\overline\Lambda_K$ will be contained in
\[
\{(q,p)\in T^{\ast}S\colon |p|<2\delta_0\}.
\]
Write $x=(x_1,x_2,x_3).$
The contactomorphism of Lemma \ref{lma:contactomorphism} and the properties of $S$, see Subsection~\ref{defS}, imply that $\overline{H}_\pm$ consists of the points $(p(x),q(x))\in T^{\ast} S$
whose coordinates satisfy the following:
\[
q(x)=b(-x_2,x_1,1),
\]
where $b>0$ is such that $q(x)\in S;$ and
\[
p(x)=\left(x_1-x_2x_3\sqrt{1-a^2(x_1^2+x_2^2)}\,\,,\,\, x_2-x_1x_3\sqrt{1-a^2(x_1^2+x_2^2)}\,\,,\,\,  a^2x_3(x_1^2+x_2^2)\right),
\]
where $a\geq 0$ is chosen so that $\nu(q(x))= (-ax_2, ax_1, \sqrt{1-a^2(x_1^2+x_2^2)})$.  It follows that
\[
|p(x)|^{2}=(x_1^2+x_2^2)(1+a^2x_3^2).
\]
Since the second factor is bigger than or equal to 1, the lemma clearly follows.
\end{proof}

\begin{lma}\label{lma:tv1}
There exists an almost complex structure $J$ on $T^{\ast} S$ which agrees with $J_0$ in a neighborhood of $\overline{ H}_{\pm}$ and outside the region where $|p|\le\delta_0,$ and which is regular for $\overline{\Lambda}_K$ in the sense that $0$- and $1$-dimensional moduli spaces of holomorphic disks with boundary on $\Lambda_K$ and one positive puncture are transversely cut out.
\end{lma}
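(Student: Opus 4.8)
The plan is to construct $J$ in two stages: first fix it near the obstacle sets $\overline H_\pm$ and outside the region $\{|p|\le \delta_0\}$ so that it is literally equal to $J_0$ there, and then perturb it in the remaining compact region to achieve transversality. The key observation making this work is that the ``frozen'' regions are disjoint from the interiors of the holomorphic disks we care about: by Lemma~\ref{lma:confine}, every $J$-holomorphic disk with boundary on $\overline\Lambda_K$ and one positive puncture is contained in $\{|p|<2\delta_0\}$, so it never meets the outer frozen region; and by Lemma~\ref{lma:disjoint2} such a disk never meets $\overline H_\pm\setminus\overline H_\pm(2\delta_0)$, while near $\overline H_\pm(2\delta_0)$ the set $\overline H_\pm(2\delta_0)$ is a $J_0$-complex subvariety disjoint from $\overline\Lambda_K$ (Lemma~\ref{lma:subvariety}), so positivity of intersection forces any disk meeting a neighborhood of $\overline H_\pm$ to intersect it in isolated points only. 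This is what lets us freeze $J$ near $\overline H_\pm$ without destroying our ability to perturb toward transversality.

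First I would set up the space of admissible almost complex structures: let $\mathcal J$ be the space of smooth almost complex structures on $T^\ast S$ compatible with the symplectic form that agree with $J_0$ on a fixed open neighborhood $N$ of $\overline H_\pm\cup\big(T^\ast S\setminus\{|p|\le\delta_0\}\big)$. Since $J_0$ itself lies in (the closure of) this space and $\mathcal J$ is a nonempty convex-at-infinity Banach manifold (in the usual $C^\infty$ or $C^\varepsilon$ sense), the standard Sard–Smale argument applies once we verify that the relevant universal moduli space is a Banach manifold. Then I would invoke the standard transversality package for Legendrian contact homology disks, exactly as in \cite{EkholmEtnyreSullivan07}: the linearized $\bar\partial$-operator at a somewhere-injective disk is surjective after allowing variations of $J$, provided those variations are unconstrained on an open subset of the disk's image.

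The main obstacle — and the only place the geometry of this particular setup enters — is precisely this last point: we must check that every relevant holomorphic disk has an injective point lying in the region where $J$ is allowed to vary, i.e.\ in $\{|p|\le\delta_0\}\setminus N$. For somewhere-injectivity itself one appeals to the standard result that a nonconstant holomorphic disk has a dense set of injective points (Lazzarini / Kwon–Oh type arguments, as used in \cite{EkholmEtnyreSullivan07}); the boundary on the exact Lagrangian $\overline\Lambda_K$ rules out multiply-covered disks for the usual reasons. Then one must see that not all of these injective points can be confined to the frozen region $N$. A disk entirely contained in the region $\{|p|\ge\delta_0\}$ would, after projecting via $\pi_\C$ and using exactness of $\overline\Lambda_K\subset T^\ast S$, have to be constant by Stokes/action reasons (its boundary lies near the zero section and it carries positive area only if it dips into $\{|p|<\delta_0\}$); and a disk meeting a neighborhood of $\overline H_\pm$ does so in isolated interior points (positivity of intersection with the $J_0$-complex subvariety $\overline H_\pm(2\delta_0)$), which is a finite set and hence cannot exhaust the dense set of injective points. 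So a generic injective point lies in $\{|p|\le\delta_0\}\setminus N$, and the universal moduli space is cut out transversely there.

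Finally, with the universal moduli space established as a Banach manifold, the Sard–Smale theorem yields a comeager set of $J\in\mathcal J$ for which the $0$- and $1$-dimensional moduli spaces of disks with boundary on $\overline\Lambda_K$ and one positive puncture are transversely cut out; any such $J$ satisfies the conclusion of the lemma, since it agrees with $J_0$ near $\overline H_\pm$ and outside $\{|p|\le\delta_0\}$ by construction. I would remark that one should apply this simultaneously to the finitely many moduli spaces of index $\le 1$ in play (of which there are countably many, indexed by Reeb chords, negative chord words, and homology classes, but only finitely many contribute in each bounded action window by Gromov compactness and Lemma~\ref{lma:smallaction}), intersecting the corresponding comeager sets; this is routine and identical to the argument in \cite{EkholmEtnyreSullivan07}.
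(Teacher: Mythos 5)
Your overall strategy is sound, but it is a genuinely different and considerably longer route than the one the paper takes. The paper's entire proof is two sentences: Proposition~2.3(1) of \cite{EkholmEtnyreSullivan07} already states that regularity can be achieved by perturbing $J$ in an \emph{arbitrarily small neighborhood of the double points} of $\overline\Lambda_K$, and those double points lie on $\overline\Lambda_K$ itself, hence (by Lemmas~\ref{lma:smallaction} and~\ref{lma:disjoint2}) in $\{|p|\le\delta<\delta_0\}$ and off $\overline H_\pm$; so the perturbation can be supported away from both frozen regions and $J=J_0$ there. You instead rerun the general Sard--Smale/universal-moduli-space argument, which forces you to verify that every relevant disk has an interior injective point in the region where $J$ is allowed to vary. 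That buys generality but costs you the hardest step, which you treat too lightly.

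Concretely, the weak link is your somewhere-injectivity claim. Exactness of $\overline\Lambda_K$ rules out nonconstant \emph{unpunctured} disks by Stokes; it does not rule out multiply covered \emph{punctured} disks, and the Lazzarini/Kwon--Oh density-of-injective-points results you invoke are for disks without punctures. For LCH disks the injectivity one actually has for free is near the positive puncture, where the map is asymptotic to the Reeb chord strip --- i.e.\ precisely near a double point of $\overline\Lambda_K$, which is why the paper localizes the perturbation there. Also, your argument that the injective points are not all trapped in the frozen set $N$ is slightly misaimed: the relevant issue is not that the disk meets $\overline H_\pm$ in finitely many points, but that it could a priori spend an open set of its domain inside the \emph{neighborhood} of $\overline H_\pm$; the correct fix is that the disk boundary lies on $\overline\Lambda_K$, which has a definite distance from $\overline H_\pm$ and from $\{|p|\ge\delta_0\}$, so for $N$ small enough an open set of interior points near the boundary maps outside $\overline N$. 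With those two repairs your argument closes, but the paper's citation of the double-point localization in \cite{EkholmEtnyreSullivan07} makes all of this unnecessary.
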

\begin{pf}
Proposition  2.3(1) in \cite{EkholmEtnyreSullivan07} shows that the asserted regularity can be achieved by perturbing $J$ in an arbitrary small neighborhood of the double points of $\overline\Lambda_K$. Since these double points lie neither on $\overline{H}_{\pm}$ nor in the region where $|p|\ge\delta_0$ the lemma follows.
\end{pf}

\subsection{A filtered DGA}
\label{ssec:filteredDGA}
Following the discussion in Section~\ref{sec:construction},
we now define a filtered version of
the Legendrian contact homology DGA
of $\Lambda_K$ when $K$ is a transverse link.
See
\cite{EkholmEtnyreSullivan07} for further background details
on the unfiltered DGA.

Let $\KCA^-(K) = \LCA(\Lambda_K)$ be the graded free associative
non-commutative unital algebra over the ring
$\Z[H_1(\Lambda_K)][U, V]$ generated by the
Reeb chords of $\Lambda_K$.
Here $U,V$ are two (formal) variables of grading 0. Other generators and coefficients have grading exactly as in the usual Legendrian contact homology DGA determined via a Maslov index. We denote the grading $|\cdot|$.

Consider a Reeb chord $a,$ a monomial of
Reeb chords ${\mathbf b} =
b_1\cdots b_m,$ and
a homology class $A \in  H_1(\Lambda_K).$
Recall from \eqref{eq:projmdli} the moduli space ${\M}_A(a;{\mathbf b})$ of holomorphic disks with boundary on $\Lambda_K$, which has dimension $|a|-|\mathbf{b}|-|A|-1$. For $u \in {\M}_A(a;{\mathbf b}),$ let $n_U(u)$
and $n_V(u)$ denote the algebraic intersection of
$u$ and $\overline{H}_+$ and $u$ and $\overline{H}_-,$
respectively. Lemmas \ref{lma:subvariety} and \ref{lma:disjoint2}
imply that these counts are well-defined and non-negative for the $J$ and $K$ we consider.

Define the differential ${\partial}^- \colon \KCA^-(K) \to\KCA^-(K)$ by
\begin{equation}
\label{FilteredDifferential.eq}
       {\partial}^- a =
        \sum_{\{u \in {\M}_A(a;{\mathbf b}) \,\,|\,\,
        |a|-|\mathbf{b}|-|A|=1\}}
        (-1)^{|a|+1}
        \sigma(u) \,U^{n_U(u)} V^{n_V(u)}\, e^A\, {\mathbf b},
\end{equation}
where $\sigma(u) \in \{\pm1\}$ is determined by the moduli
space orientation; compare to Equation~\eqref{filtdiff1}.
Setting $U = V=1$ we recover the differential used
in the Legendrian contact homology DGA defined in \cite{EkholmEtnyreSullivan07}.

\begin{thm}\label{thm:filterdiff}
The above definition gives a filtered differential: $\pa^{-}$ does not decrease the exponents of $U$ or $V,$ and $(\pa^-)^2=0$.
\end{thm}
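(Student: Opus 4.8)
The plan is to verify the two assertions of Theorem~\ref{thm:filterdiff} separately, each resting on one of the structural facts assembled in the previous subsections. For the filtration statement, the point is simply that every exponent appearing in \eqref{FilteredDifferential.eq} is nonnegative: for each $u\in{\M}_A(a;{\mathbf b})$ the integers $n_U(u)$ and $n_V(u)$ are the algebraic intersection numbers of $u$ with the almost complex hypersurfaces $\overline H_+$ and $\overline H_-$. By Lemma~\ref{lma:subvariety}, $\overline H_\pm(2\delta_0)$ is a $J_0$-complex subvariety, and by Lemma~\ref{lma:disjoint2} any holomorphic disk with boundary on $\overline\Lambda_K$ that meets $\overline H_\pm$ at all meets it only inside $\overline H_\pm(2\delta_0)$, where both $J=J_0$ and $\overline H_\pm$ are genuinely complex; hence positivity of intersection for $J$-holomorphic curves gives $n_U(u),n_V(u)\ge 0$. (One also records here, via Lemma~\ref{lma:disjoint1}/Lemma~\ref{lma:disjoint2}, that $\overline\Lambda_K\cap\overline H_\pm=\varnothing$, so the intersection number is insensitive to boundary behavior and is a well-defined homotopy invariant of $u$.) Since $\pa^-$ is extended to all of $\KCA^-(K)$ by the graded Leibniz rule and $U,V$ are central, it follows that $\pa^-$ does not lower the $U$- or $V$-exponent, i.e.\ it respects the double filtration $\KCA^-\supset U\cdot\KCA^-+V\cdot\KCA^-\supset\cdots$.

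For $(\pa^-)^2=0$ the strategy is to run the usual Legendrian contact homology argument of \cite{EkholmEtnyreSullivan07}, keeping track of the two monomial weights. Concretely, one analyzes the boundary of the compactified $1$-dimensional moduli spaces $\overline{\M}_A(a;{\mathbf b})$: by the compactness/gluing theory quoted in Subsection~\ref{lch}, this boundary consists of two-level broken curves, each a transverse pair $(u_1,u_2)$ obtained by breaking at an intermediate Reeb chord, and these boundary points are in oriented bijection with the terms of $(\pa^-)^2 a$. The signs $\sigma(u_1)\sigma(u_2)$ cancel in pairs exactly as in the unfiltered proof, since the orientation scheme is unchanged. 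The only new ingredient is the behavior of the weights under breaking, and here one uses additivity: because intersection number with a cycle is additive under the degeneration $u\rightsquigarrow(u_1,u_2)$ (the hypersurfaces $\overline H_\pm$ being disjoint from $\overline\Lambda_K$ and from the Reeb chords, no intersections are created or destroyed in the limit), we have $n_U(u)=n_U(u_1)+n_U(u_2)$ and likewise for $n_V$; similarly $e^A=e^{A_1}e^{A_2}$ for the homology classes and the chord words concatenate. Hence the monomial $U^{n_U}V^{n_V}e^A{\mathbf b}$ attached to each broken configuration matches the corresponding monomial in $\pa^-\pa^- a$, and the cancellation is term-by-term.

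The main obstacle is the same one that is already handled in \cite{EkholmEtnyreSullivan07}: ensuring that the relevant $0$- and $1$-dimensional moduli spaces are transversely cut out and that the $1$-dimensional ones are compactified by exactly the broken curves described, with no boundary contributions from bubbling, from multiply-covered components, or from curves escaping to infinity in $T^*S$. This is precisely what the earlier lemmas have been arranged to supply: Lemma~\ref{lma:confine} (together with Lemma~\ref{lma:shrink}) confines all such disks to $\{|p|<2\delta_0\}$, a compact region, so no escape can occur; Lemma~\ref{lma:tv1} provides a regular $J$ agreeing with $J_0$ near $\overline H_\pm$ and outside $\{|p|\le\delta_0\}$, so the analytic package of \cite{EkholmEtnyreSullivan07} applies verbatim; and the positivity/disjointness facts above guarantee the weights make sense. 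So the proof is essentially: quote the compactness and gluing theorems of \cite{EkholmEtnyreSullivan07}, note that adjoining the central monomial weights $U^{n_U}V^{n_V}$ is compatible with the broken-curve boundary description by additivity of intersection numbers, and conclude $(\pa^-)^2=0$ as a refinement of $\pa^2=0$.
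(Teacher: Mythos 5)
Your proposal is correct and follows essentially the same route as the paper's proof: nonnegativity of $n_U,n_V$ via positivity of intersection in the region where $J=J_0$ and $\overline H_\pm$ is complex (Lemmas~\ref{lma:subvariety}, \ref{lma:disjoint2}, \ref{lma:tv1}), and $(\pa^-)^2=0$ by the standard compactness/gluing argument of \cite{EkholmEtnyreSullivan07} together with additivity of the intersection numbers under breaking. The paper states this in two sentences; your write-up supplies the details it leaves implicit.
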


\begin{pf}
The unfiltered version of the differential squares to zero due to the usual transversality, gluing and compactness arguments \cite{EkholmEtnyreSullivan07}.
The filtered version follows from the fact that $n_U$ and $n_V$
are nonnegative, the fact  that gluing and compactness respects the
filtration, and Lemmas~\ref{lma:disjoint2} and~\ref{lma:tv1}.
\end{pf}

We call the above filtered DGA the \dfn{transverse knot DGA of $K$}
and denote it $(\KCA^-(K),\pa^-)$.
We next show that the filtered DGA of $K$ is invariant under
transverse isotopies of $K$ up to filtered stable tame isomorphism.
(For the definition of stable tame isomorphism, which extends to our
situation, see \cite{EkholmEtnyreSullivan07} for example.)
In particular, the homology of the filtered DGA  $(\KCA^-(K),\pa^-)$
is a transverse link invariant.

\begin{thm}\label{thm:filterinv}
The filtered DGA,
$(\KCA^-(K),\pa^-),$ is invariant under transverse isotopies of $K$ up to filtered stable tame isomorphism.
\end{thm}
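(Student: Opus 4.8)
The plan is to mimic the standard invariance proof for Legendrian contact homology, tracking the $U,V$ filtrations throughout. A transverse isotopy $K_t$, $t\in[0,1]$, gives rise to a Legendrian isotopy $\Lambda_{K_t}$ in $J^1(S)$, and by Lemma~\ref{lma:disjoint1} the conormal lifts stay disjoint from $H_\pm$ for every $t$; after applying Lemma~\ref{lma:shrink} we may assume the whole isotopy takes place inside $B(\delta)$, so Lemmas~\ref{lma:confine}, \ref{lma:subvariety}, and~\ref{lma:disjoint2} apply uniformly along the family. First I would choose a generic $1$-parameter family of almost complex structures $J_t$ (as in Lemma~\ref{lma:tv1}), agreeing with $J_0$ near $\overline H_\pm$ and outside $\{|p|\le\delta_0\}$ for all $t$, so that the parametrized moduli spaces of holomorphic disks with boundary on $\overline\Lambda_{K_t}$ and one positive puncture are transversely cut out except at finitely many values of $t$ where a ``handle-slide'' or birth/death of Reeb chords occurs. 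Away from these exceptional parameters, the DGA is unchanged; at each exceptional parameter the DGA changes by an elementary automorphism (handle slide) or a stabilization/destabilization, exactly as in the unfiltered theory of \cite{EkholmEtnyreSullivan07}.

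The key point to check is that each of these elementary moves is \emph{filtered}. For a handle slide, the relevant automorphism sends a generator $a\mapsto a + (\text{word involving other generators and coefficients from }R[U,V])$; the extra word is a count of a $1$-dimensional moduli space at the exceptional parameter, and by positivity of intersection with $\overline H_\pm$ (Lemma~\ref{lma:disjoint2}) every term in it carries a monomial $U^{n_U}V^{n_V}$ with $n_U,n_V\ge 0$. Hence the tame automorphism, and its inverse (which has the same triangular form), preserve the filtration by powers of $U$ and of $V$; the conjugated differential is again filtered by Theorem~\ref{thm:filterdiff}. For a stabilization, one adds two new generators $e_1,e_2$ with $\partial^- e_1 = e_2$ (plus possibly higher-order filtered terms) in the standard way; since $\partial^-$ does not decrease $U$- or $V$-exponents on the old part and the new relation is the trivial one, the enlarged DGA is still filtered and the stabilization is a filtered stable tame isomorphism.

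The more analytic input is a parametrized-moduli-space argument: one proves that the compactification of the total space $\bigcup_t \mathcal{M}^{J_t}$ of $0$- and $1$-dimensional disks is a manifold with boundary-with-corners whose boundary strata are either broken configurations (accounting for the chain-map/chain-homotopy identities) or the transition configurations at exceptional $t$; this is exactly the argument in \cite{EkholmEtnyreSullivan07}, and it carries over verbatim once one observes that the intersection numbers $n_U,n_V$ are locally constant on moduli space and add under gluing (again by positivity and by $\overline H_\pm$ being a $J_0$-holomorphic subvariety near which all $J_t$ agree, Lemma~\ref{lma:subvariety}), so that the chain maps and chain homotopies produced respect the $R[U,V]$-module structure and the double filtration. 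I expect the main obstacle to be purely bookkeeping: organizing the parametrized cobordism argument so that the filtration degrees are manifestly additive through all gluings and degenerations, i.e.\ verifying that no term in any chain map or homotopy can decrease a $U$- or $V$-exponent. Since $\overline H_\pm$ is closed, disjoint from $\overline\Lambda_{K_t}$, and $J_t$-complex for all $t$, positivity of intersection guarantees this, so the obstacle is one of careful exposition rather than a genuine new difficulty; with it in hand the chain of elementary filtered moves assembles into a filtered stable tame isomorphism between the DGAs at $t=0$ and $t=1$, and invariance of the homology follows.
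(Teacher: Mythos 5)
Your proposal is correct and follows essentially the same route as the paper: reduce to an isotopy inside $B(\delta)$ via Lemma~\ref{lma:shrink}, run the bifurcation analysis of \cite{EkholmEtnyreSullivan05b} on parametrized moduli spaces, and observe that positivity of intersection with the $J$-complex, boundary-disjoint hypersurfaces $\overline{H}_\pm$ forces every chain map, handle-slide automorphism, and (de)stabilization to respect the $U,V$ filtration. The only difference is an implementation detail: at the index $-1$ (handle-slide) moments the paper constructs the filtered tame isomorphism via the auxiliary Legendrian $\Lambda_{K_{t'}}\times\R\subset J^1(S\times\R)$ with filtering hypersurfaces $\overline{H}_\pm\times\C$, rather than the direct gluing description you sketch.
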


\begin{pf}
By Lemma \ref{lma:shrink} we may assume that $K_t$, $0\le t\le 1$ is an
isotopy of transverse links inside $B(\delta)$
connecting two given transverse links.
Then $\Lambda_{K_t}$ is an isotopy of Legendrian submanifolds
confined to the region in $J^{1}(S)$ where $|p|\le\delta$.
To prove the invariance statement we generalize the invariance proof
in \cite{EkholmEtnyreSullivan05b}.

We study parameterized moduli spaces and first note that, as in
\cite[Lemma 2.11]{EkholmEtnyreSullivan05b}, when there are no disks of index $-1$ and no births/deaths
of intersection points, the moduli spaces change by cobordisms and the filtered differential is unchanged.

Suppose at some critical time $t'$ an index $-1$ disk exists. Like in \cite[Section 10]{EkholmEtnyreSullivan05b}, we use a small
perturbation of the trace of the fronts of the isotopies near the critical instance to
create a Legendrian submanifold
$\Lambda_{K_{t'}}\times\R\subset J^{1}(S\times\R)$
and study a compact part of this manifold corresponding to $[-1,1]\subset\R$.
Straightforward modifications  of Lemmas~\ref{lma:confine}, \ref{lma:subvariety},
and~\ref{lma:disjoint2} show that
$\overline{H}_{\pm}\times\C\subset T^{\ast}(S\times\R)$ (where we think of $T^{\ast}\R$ as $\C$)
are disjoint from
$\pi_\C(\Lambda_{K_{t'}}\times\R)$
and is complex in regions where holomorphic disks with one positive
puncture might exist.
As above it then follows that $\overline{H}_{\pm}\times\C$ give filtrations
on $\LCA^-(\Lambda_{K_{t'}}\times\R)$ compatible with those induced on $\LCA^{-}(\Lambda_{K_{t'}})$.
From this filtered differential we construct a filtered tame
isomorphism by repeating, essentially verbatim, the construction of
the tame isomorphism in the unfiltered case given in
\cite[Lemma 2.12]{EkholmEtnyreSullivan05b}.

When a birth/death of intersection points occurs, we use the same analytical ``degenerate" gluing results \cite[Proposition 2.16 and 2.17]{EkholmEtnyreSullivan05b} and a filtered version of the algebraic arguments in \cite[Lemma 2.15]{EkholmEtnyreSullivan05b} to construct an explicit filtered stable tame isomorphism.
\end{pf}

\subsection{Topological invariance of the infinity version in $\R^3$}
\label{subsec:infty}

Before delving in depth into the technical details of the computation of the filtered DGA in $\R^3$ in Section~\ref{sec:computations}, we give a geometric explanation
of Theorem~\ref{thm:topNOTtran}, which says that the
infinity version of the transverse invariant in $\R^3$ is actually a topological
link invariant. To simplify notation, we treat only the single-component knot case in this subsection. 
An alternative
discussion in the algebraic setting can be found in \cite{Ng10}, though the presentation here has the advantage that it explains the underlying geometric reason for this phenomenon.

Let $K$ be an oriented transverse knot in $\R^3$. The homology
$H_1(\Lambda_K) \cong \Z^2$ has a distinguished set of generators
corresponding to the meridian and ($0$-framed) longitude of $K$,
allowing us to identify $\Z[H_1(\Lambda_K)]$ with $R=\Z[\lambda^{\pm
  1},\mu^{\pm 1}]$. We
can then rewrite Equation~\eqref{FilteredDifferential.eq} as
\begin{equation} \label{FilteredDifferential.eq2}
        \partial^- a =
        \sum_{\{u \in (\mathcal{M}_A(a;{\mathbf b})/\R) \,\,|\,\,
        \dim\mathcal{M} = 1\}}
        (-1)^{|a|+1}
        \sigma(u)   U^{n_U(u)}V^{n_V(u)} \lambda^{\longitude(A)}\mu^{\mer(A)} {\mathbf b},
\end{equation}
where $A$ is the linear combination of $\longitude(A)$ longitudes and
$\mer(A)$ meridians.

As in \cite{Ng10}, define the infinity DGA,
$(\KCA^\infty(K),\pa^\infty),$ by tensoring $(\KCA^-(K),\pa^-)$
with $R[U^{\pm 1},V^{\pm 1}]$ and replacing $\lambda$ by $\lambda
(U/V)^{-(\selfl(K)+1)/2}$, where $\selfl(K)$ is the self-linking number of $K$.
We are now ready to prove Theorem~\ref{thm:topNOTtran} from the introduction.

\begin{proof}[Proof of Theorem~\ref{thm:topNOTtran}]
Let $u$ denote a holomorphic disk contributing to the Legendrian
contact homology of $\Lambda_K \subset \U^*\R^3$. Just as we viewed the boundary of $u$
in Subsection~\ref{subsec:conormal} as an element of $H_1(\Lambda_K)$
by appending capping paths at Reeb chords, we can view the entirety of
$u$ as an element $[u]$ of the relative homology $H_2(\U^*\R^3,\Lambda_K)$
by appending capping surfaces at Reeb chords.

Note that $\U^*\R^3 \cong J^1(S^2)$ is topologically $S^2 \times \R^3$. The exact sequence
\[
\cdots \longrightarrow 0 \longrightarrow H_2(\U^*\R^3)=\Z \longrightarrow
H_2(\U^*\R^3,\Lambda_K) \longrightarrow H_1(\Lambda_K) = \Z^2
\longrightarrow 0 \longrightarrow \cdots
\]
implies that $H_2(\U^*\R^3,\Lambda_K) \cong \Z^3$. Pick a basis
$s,l,m$ of $H_2(\U^*\R^3,\Lambda_K)$ such that the following holds:
\begin{itemize}
\item
$s = [S^2]$, the class of $S^2$ in $\U^*\R^3 \cong \R^3\times S^2$;
\item
$l$ is the homology class of the conormal lift of a cooriented Seifert surface of $K$;
\item
$m$ is the hemisphere of the $S^2$ fiber of $\U^*\R^3$ over some point
$p\in K$, bounded by the intersection of $\Lambda_K$ with this fiber
and containing the positive lift of $\xi$ over $p$.
\end{itemize}
Note that under the boundary map
$H_2(\U^*\R^3,\Lambda_K) \longrightarrow H_1(\Lambda_K)$, the classes
$s,l,m$ map to $0$, the longitude, and the meridian, respectively.
Now intersecting $H_\pm$ or projecting to $T^*S^2$ and intersecting with with $\overline{H}_{\pm}$
defines linear maps
\[
n_U,n_V : H_2(\U^*\R^3,\Lambda_K) \to \Z
\]
such that $n_U(s) = n_V(s) = 1$, $n_U(m)=1$, and $n_V(m)=0$. By the
definition of self-linking number, the difference
$n_U(l)-n_V(l)$ is $\selfl(K)$; by adding the appropriate multiple
of $s$ to $l$, we may assume that $n_U(l) = \frac{\selfl(K)-1}{2}$ and
$n_V(l) = -\frac{\selfl(K)+1}{2}$.

If we write $[u] = s(u)s+l(u) l+m(u)m$ for $s(u),l(u),m(u) \in\Z$,
then $n_U[u] = s(u)+\frac{\selfl(K)-1}{2}l(u)+m(u)$ and
$n_V[u] = s(u)-\frac{\selfl(K)+1}{2}l(u)$. From
Equation~\eqref{FilteredDifferential.eq2}, the contribution of $u$ to the
differential $\pa^\infty$ has coefficient
\begin{align*}
(-1)^{|a|+1} \sigma(u) U^{n_U[u]} V^{n_V[u]}&
(\lambda(U/V)^{-(\selfl(K)+1)/2})^{l(u)} \mu^{m(u)}\\ &=
(-1)^{|a|+1} \sigma(u) (\lambda/U)^{l(u)} (\mu U)^{m(u)} (UV)^{s(u)}.
\end{align*}

Now in the definition of the unfiltered Legendrian contact homology of
$\Lambda_K$, which is a
topological knot invariant, one could use the coefficient ring
$\Z[H_2(\U^*\R^3,\Lambda_K)]$ rather than $\Z[H_1(\Lambda_K)]$. If one
writes $\tilde{\lambda},\tilde{\mu},\tilde{\sigma}$ for the
multiplicative generators of
$\Z[H_2(\U^*\R^3,\Lambda_K)]$ corresponding to $l,m,s$, then the
contribution of $u$ to Legendrian contact homology with this enhanced
coefficient ring is
\[
(-1)^{|a|+1} \sigma(u) \tilde{\lambda}^{l(u)} \tilde{\mu}^{m(u)}
\tilde{\sigma}^{s(u)}.
\]
But this is precisely the coefficient of the contribution of $u$ to
$\pa^{\infty}$, once we make the global substitutions $\tilde{\lambda}
= \lambda/U$, $\tilde{\mu} = \mu U$, $\tilde{\sigma} = UV$.

It follows that this global substitution turns
$(\KCA^\infty(K),\pa^\infty)$ into the unfiltered Legendrian contact homology DGA
of $\Lambda_K$ with coefficients in
$\Z[H_2(\U^*\R^3,\Lambda_K)]$. Since the latter is a topological
invariant, the result follows.
\end{proof}

We remark that our choice of basis $(l,m,s)$ for
$H_2(S^*\R^3,\Lambda_K)$ in the proof of Theorem~\ref{thm:topNOTtran}
is canonical, depending only on the topological type of $K$;
as a consequence, the (stable tame) isomorphisms on
Theorem~\ref{thm:topNOTtran} act as the identity, not just an
isomorphism, on the base ring
$\Z[\lambda^{\pm 1},\mu^{\pm 1},U^{\pm 1},V^{\pm 1}]$. This is clear
for $m$, which can be defined using the orientation on $K$ rather than
the contact structure, and for $s$. For $l$, suppose that there is a
topological
isotopy $K_t$ between two transverse knots $K_0$ and $K_1$, with a
corresponding isotopy $\Sigma_t$ of Seifert surfaces, such that $K_t$
fails to be a transverse knot at finitely many moments. At these moments,
the self-linking number may jump, generically by $\pm 2$, but $n_U([\Sigma_t])$
and $n_V([\Sigma_t])$ also each jump by $\pm 1$. Thus the quantity
$n_U([\Sigma_t]) - \frac{\selfl(K_t)-1}{2}$
remains unchanged during
the isotopy. Since $l$ (which is $[\Sigma_t]$ plus some multiple
of $s$) is chosen so that $n_U(l) = \frac{\selfl(K_t)-1}{2}$, the isotopy $K_t$
preserves $l$.

\section{Computing the Filtered DGA of a Transverse Link} \label{sec:computations}

In this section we compute the filtered DGA of a transverse link to prove Theorem \ref{thm:mainlinktv}. We begin by describing the main strategy used in \cite{EENS} for calculating knot contact homology. This description leads to a sufficient understanding of the behavior of all holomorphic disks needed for the calculation of the filtered differential.

\subsection{Scheme for calculating knot contact homology}\label{scheme}
The calculation of knot contact homology in \cite{EENS} proceeds as follows. Consider a link $K$ braided around the unknot $U$ with $n$ strands and view it as a multisection of a fibration $U\times D^{2}$, where $D^{2}$ is the $2$-disk, corresponding to a tubular neighborhood of $U$. As we degenerate the multisection toward $U$ the conormal lift $\Lambda_K$ approaches  the conormal lift of the unknot $\Lambda_U$ (with multiplicity $n$). More precisely, a neighborhood of $\Lambda_U\subset S^{\ast}\R^{3}$ is contactomorphic to the $1$-jet space $J^{1}(\Lambda_U)$ of $\Lambda_U$ and for $K$ sufficiently close to $U$, $\Lambda_K$ is a multisection of $J^{1}(\Lambda_U)\to\Lambda_U$ with $n$ sheets. In other words, over any open disk $W\subset \Lambda_U$, $\Lambda_K$ is given by the $1$-jet extension of $n$ functions $F_j\colon W\to\R$, $j=1,\dots,n$.

It will be useful to recall that when a Legendrian submanifold $\Lambda$ in $J^1(S^2)$ is given locally as the $1$-jet of $n$ functions $F_i$ then its Reeb chords (that is double points of $\pi_\C(\Lambda)$) correspond to critical points with positive critical values of the difference of these local functions $F_i-F_j.$

By  \cite[\S 3]{EENS}, 
we know that close to the limit as $\Lambda_K$ approaches $\Lambda_U,$ the Reeb chords of $\Lambda_K$ are of two types:
\begin{itemize}
\item[$\mathbf{I}.$] Near each Reeb chord $ch$ of $\Lambda_U$ there are $n^{2}$ Reeb chords $ch_{ij}$, $1\le i,j\le n$ of $\Lambda_K$, where we write $ch_{ij}$ for the chord near $ch$ that starts on the $j^{\rm th}$ local sheet of $\Lambda_K$ near the start point of $ch$ and ends at the $i^{\rm th}$ local sheet near the endpoint of $ch$.
\item[$\mathbf{II}.$] There are $n(n-1)$ small Reeb chords corresponding to critical points of positive local function differences of the form $F_i-F_j$. These critical points are either maxima or saddle points. We denote the former by $b_{ij}$ and the latter by $a_{ij}$, where $1\le i,j\le n$, $i\ne j$, and we use the same notational conventions for subscripts as above.
\end{itemize}
Thus Reeb chords of $\Lambda_K$ are either Reeb chords of $\Lambda_U$ with a small chord added or subtracted or small Reeb chords entirely inside the neighborhood of $\Lambda_U.$

One of the main technical results of \cite{EENS} (see \cite{EESa} for a similar result) shows that holomorphic disks admit a similar description. Near the limit as $\Lambda_K$ approaches $\Lambda_U$, rigid holomorphic disks in $T^{\ast} S$ with boundary on $\overline{\Lambda}_K = \pi_\C(\Lambda_K)$ and one positive puncture are of two types:
\begin{itemize}
\item[$\mathbf{I}.$] They can lie in an arbitrarily small neighborhood of the union of the following: a disk with one positive puncture and boundary on $\Lambda_U;$ and, certain flow trees of the function differences $F_i-F_j$ attached along the disk's boundary.
\item[$\mathbf{II}.$] They can lie entirely inside a small neighborhood of $\Lambda_K$ and are given locally as flow trees of the functional differences $F_i-F_j$.
\end{itemize}
Furthermore, any disk with its positive puncture at a chord of type
$\mathbf{I}$ (resp.~$\mathbf{II}$) is of type $\mathbf{I}$
(resp.~$\mathbf{II}$). For the notion of flow trees we refer to
\cite[Section 2]{Ekholm07}. We do not give a complete definition of
flow trees here, but merely note that they are made from pieces of
flow lines of the functional differences $F_i-F_j$ and that as
$\Lambda_K$ collapses onto $\Lambda_U$ the corresponding holomorphic
disks stay in smaller and smaller neighborhoods of $\Lambda_U.$

In conclusion, in order to compute the differential $\pa\colon\KCA(K)\to\KCA(K)$ we need to understand holomorphic disks with boundary on $\overline{\Lambda}_U$ and flow trees determined by $F_i-F_j$. The latter can be understood using finite dimensional Morse theory. In order to understand the former we use the correspondence between holomorphic disks and flow trees on fronts from \cite{Ekholm07}.

\subsection{The conormal lift of the unknot}\label{ssec:unknot}
One may easily compute (or see \cite{EkholmEtnyre05,EENS} and
Lemma~\ref{liftU} below) the conormal lift $\Lambda_U$ of the unknot
$U$ represented as the circle of radius $r<\delta$ in the
$x_1x_2$-plane. This lift can be slightly perturbed in $J^1(S)$ so
that it has two Reeb chords denoted $c$ and $e,$ see Figure
\ref{unknotlift}.
\begin{figure}[htb]
  \relabelbox \small {
  \centerline{\epsfbox{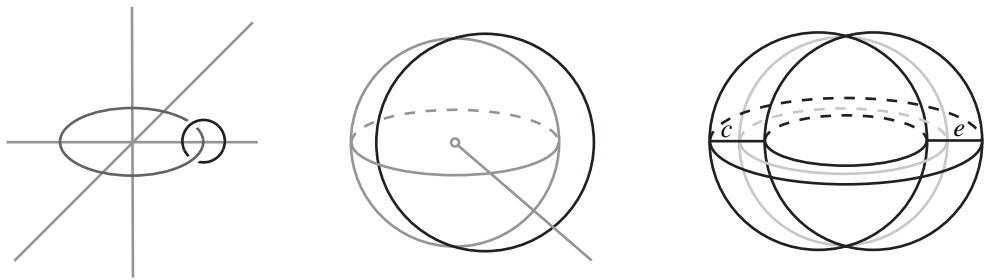}}}
  \relabel{e}{$e$}
  \relabel{c}{$c$}
  \endrelabelbox
\caption{On the left is $\R^3$ with the unknot in the $xy$-plane and a normal
  circle at one point is shown. The middle figure shows the normal
  circle seen in $S^2\times\R\cong \R^3-\{(0,0,0)\}$ which is thought
  of as the (front) projection of $S^*\R^3\cong J^1(S^2)$ to
  $S^2\times \R.$ On the right is the front projection of the entire
  conormal lift of the unknot which is obtained by rotating the circle
  shown in the middle figure about the line through the north and
  south poles. We have also slightly perturbed the picture on the
  right so that there are only two Reeb chords, labeled $c$ and $e$ in
  the figure.
}
\label{unknotlift}
\end{figure}

For purposes of finding holomorphic disks via flow trees, $\Lambda_U$
must be perturbed to be in general position with respect to the front
projection into $S^2\times \R.$ Notice that in Figure~\ref{unknotlift}
there is a circle in $\Lambda_U$ that is mapped to the north pole and
a circle mapped to the south pole. Since $\Lambda_U$ is already in
general position outside the fibers over the north and south pole of
$S$ we concentrate our attention there. Near the poles, the projection
$\overline{\Lambda}_U$ looks as described in the following lemma.
\begin{lma}\label{liftU}
If $U$ is the circle of radius $r<\delta$ in the $x_1x_2$-plane, then
\[
\overline{\Lambda}_U\cap T^{\ast}E_{2\delta_0}=\{(q,p)\colon q=(\xi_1,\xi_2,\pm 1),\,p=(x_1,x_2,0)\}\cong S^{1}\times(-1,1) \times \{\pm 1\},
\]
where $\sqrt{x_1^{2}+x_2^{2}}=r$, $-\xi_1x_2+\xi_2x_1=0$, and $\cong$ denotes diffeomorphism.
\end{lma}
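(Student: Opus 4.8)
The plan is a direct computation in the coordinates supplied by the contactomorphism $\phi$ of Lemma~\ref{lma:contactomorphism}: I would parametrize $U$ and the polar caps of $S$ explicitly and then unwind $\phi$, using the flatness hypothesis \eqref{eq:shapeofS} to pin down the normal $\nu$ on the relevant part of $S$.

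First I would write $\Lambda_U$ down concretely. Parametrizing $U$ by $x(\theta)=(r\cos\theta,r\sin\theta,0)$, the line $T_{x(\theta)}U$ is spanned by $(-\sin\theta,\cos\theta,0)$, so (using the flat metric to identify covectors with vectors) a point $y\in S$ lies in the conormal of $U$ over $x(\theta)$ precisely when $-y_1\sin\theta+y_2\cos\theta=0$, i.e.\ when $(y_1,y_2)$ is a scalar multiple of $(\cos\theta,\sin\theta)$, with $y_3$ free subject to $y\in S$. Now restrict to the part over $E_{2\delta_0}$: since $2\delta_0<3\delta_0$, \eqref{eq:shapeofS} shows that any $y\in S$ with $\sqrt{y_1^2+y_2^2}\le 2\delta_0$ has $y_3=\pm1$ and outward unit normal $\nu(y)=(0,0,\pm1)$, hence $x(\theta)\cdot\nu(y)=0$. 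Feeding this into $\phi(x,y)=(y,\,x-(x\cdot\nu)\nu,\,x\cdot y)$ gives $\pi_\C(\phi(x(\theta),y))=(y,\,x(\theta))$, which in the notation of the lemma is the point $q=y=(\xi_1,\xi_2,\pm1)$, $p=x(\theta)=(x_1,x_2,0)$ with $\sqrt{x_1^2+x_2^2}=r$ and $(\xi_1,\xi_2)$ a multiple of $(x_1,x_2)$, the latter being exactly the stated condition $-\xi_1x_2+\xi_2x_1=0$. This proves $\overline{\Lambda}_U\cap T^{\ast}E_{2\delta_0}\subseteq\{\dots\}$, and the explicit map $(\theta,s,\pm)\mapsto\bigl((s\cos\theta,s\sin\theta,\pm1),(r\cos\theta,r\sin\theta,0)\bigr)$ is a diffeomorphism onto the set on the right (one recovers $\theta$ from the direction of $(x_1,x_2)$, $s$ from $(\xi_1,\xi_2)$, and the sign from $y_3$), which after rescaling $s$ gives the claimed identification with $S^1\times(-1,1)\times\{\pm1\}$.

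For the opposite inclusion I would take an arbitrary point of the right-hand set, set $y=q=(\xi_1,\xi_2,\pm1)$, and observe that $\nu(y)=(0,0,\pm1)$ forces the equation $p=x-(x\cdot\nu)\nu$ to recover $x=(x_1,x_2,0)$; this $x$ lies on $U$ because $\sqrt{x_1^2+x_2^2}=r$, and the relation $-\xi_1x_2+\xi_2x_1=0$ says precisely that $y$ annihilates $T_xU$, so $(x,y)\in\Lambda_U$ and $\pi_\C(\phi(x,y))=(q,p)$. The points to keep an eye on are minor: that $E_{3\delta_0}$ genuinely contains $E_{2\delta_0}$ so that $\nu$ is exactly vertical on the polar caps (immediate from \eqref{eq:shapeofS}), and that the statement is about the Lagrangian projection $\overline{\Lambda}_U=\pi_\C(\Lambda_U)$, so the jet coordinate $z=x\cdot y$ (which equals $rs$ here) plays no role and is discarded. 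I do not anticipate a genuine obstacle; the lemma is essentially the explicit bookkeeping needed for the flow-tree analysis near the poles in the following subsections, and its proof is just unwinding $\phi$ against the parametrizations of $U$ and $S$.
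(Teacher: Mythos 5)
Your computation is correct and is exactly the unwinding that the paper has in mind: its proof of Lemma~\ref{liftU} consists of the single line ``This is immediate from the definition,'' and your argument simply makes that explicit by combining the parametrization of $U$, the flatness of $S$ over $E_{2\delta_0}$ from~\eqref{eq:shapeofS} (so $\nu=(0,0,\pm1)$ and $x\cdot\nu=0$), and the formula for $\phi$ from Lemma~\ref{lma:contactomorphism}. No discrepancy with the paper's approach.
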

\begin{pf}
This is immediate from the definition.
\end{pf}
\begin{figure}[htb]
  \relabelbox \small {
  \centerline{\epsfbox{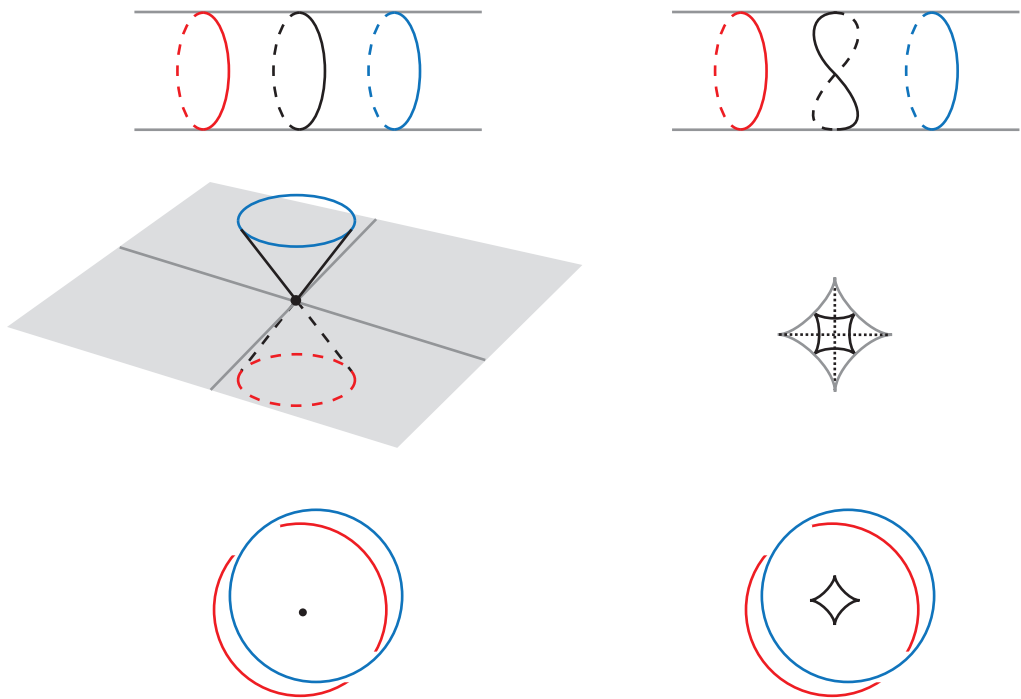}}}
  \endrelabelbox
\caption{Along the top of the figure is an annular neighborhood of the circle in $\Lambda_U$ that maps to the fiber above the north pole.
On the middle left, we see the image of this annulus near
the north pole in the front projection,
a cone whose boundary is two circles.
On the bottom left is the image of this annulus near the north pole in $S^2$ (that is, the top view of the cone where we have slightly offset the circles so that they are both visible).
On the middle right, we see the top view of the cone after it has been perturbed to have a generic front projection. More specifically, the lighter outer curve is the image of the cusp curves, the dotted lines are the image of double points in the front projection and the darkest inner curve is the image of the circle that mapped to the cone point before the perturbation.
On the bottom right, we see the image in $S^2$ of the cusp curve and
the two boundary circles on $\Lambda_U$.}
\label{fig:resolution}
\end{figure}
In Figure \ref{fig:resolution} we see the front projection of
$\Lambda_U$ over the region where $S$ is flat.
The left pictures show $\Lambda_U$ in a neighborhood of the circle
over the north (or south) pole, as described in Lemma~\ref{liftU}, and
the corresponding cone in the front projection of $\Lambda_U$. The
right pictures show $\Lambda_U$ after small perturbation near the cone
point that makes the front projection generic.
Using the right representation we get the following result describing holomorphic disks of $\Lambda_U$, taken from \cite[\S 3]{EENS}. See Figure~\ref{fig:diskboundaries} for a description of the flow trees on the front
(which are close to the projection of the disk boundaries) and Figure \ref{fig:rigidtrees} for a description of their lifts into $\Lambda_U$.
\begin{figure}[htb]
  \relabelbox \small {
  \centerline{\epsfbox{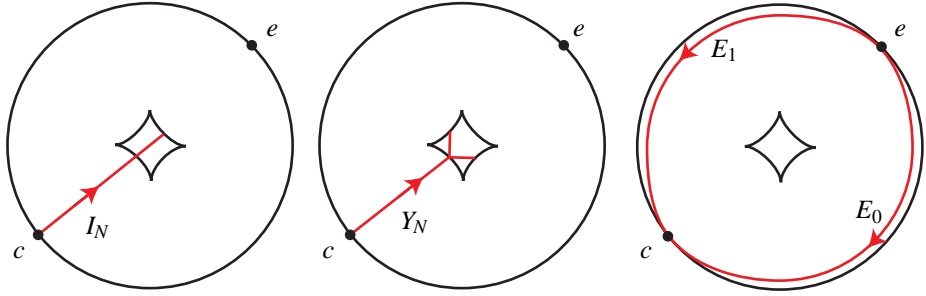}}}
  \relabel{1}{$c$}
  \relabel{2}{$c$}
  \relabel{3}{$c$}
  \relabel{4}{$e$}
  \relabel{5}{$e$}
  \relabel{6}{$e$}
  \relabel{I}{$I_N$}
  \relabel{Y}{$Y_N$}
  \relabel{e0}{$E_0$}
  \relabel{e1}{$E_1$}
  \endrelabelbox
\caption{The rigid disks from Lemma~\ref{lma:UnknotDGA} in the northern hemisphere of $S^2.$}
\label{fig:rigidtrees}
\end{figure}

\begin{figure}[htb]
  \relabelbox \small {
  \centerline{\epsfbox{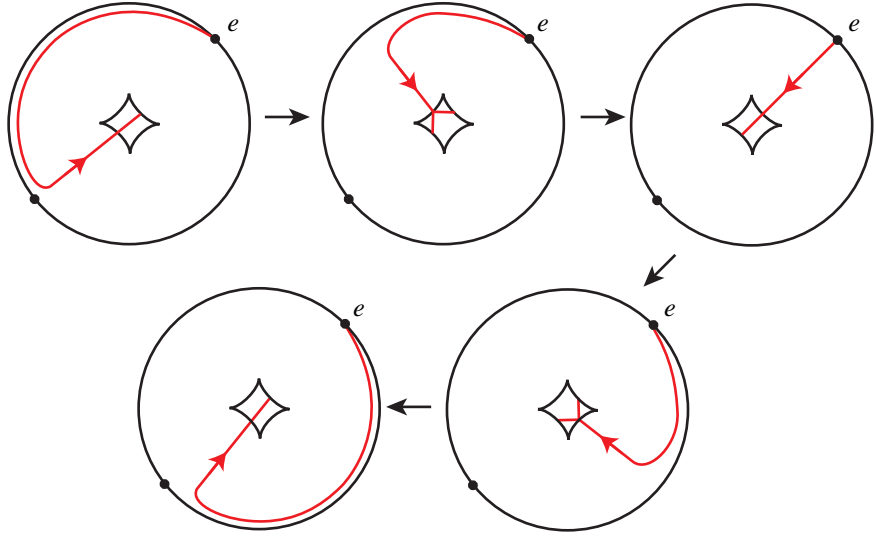}}}
  \relabel{1}{$e$}
  \relabel{2}{$e$}
  \relabel{3}{$e$}
  \relabel{4}{$e$}
  \relabel{5}{$e$}
  \endrelabelbox
\caption{The one dimensional families of trees from Lemma~\ref{lma:UnknotDGA} in the northern hemisphere of $S^2.$}
\label{fig:onedimtreeunknot}
\end{figure}

\begin{lma}[Ekholm--Etnyre--Ng--Sullivan 2011, \cite{EENS}]\label{lma:UnknotDGA}
There are exactly six rigid holomorphic disks with boundary on
$\overline\Lambda_U$: four ($I_N$,
$Y_N$, $I_S$, and $Y_S$) with positive puncture at $c$ and no negative puncture, and two
($E_1$,$E_2$) with positive puncture at $e$ and
negative puncture at $c$. If $q$ is any point in $\Lambda_{U}$ lying over a point where the front of $\Lambda_U$ has 2 sheets then there are exactly two constrained rigid holomorphic disks with positive puncture at $e$ and boundary constrained to pass through $q$. These two disks correspond to two constrained rigid flows which lie in two of the $1$-parameter families $\widetilde I_N$, $\widetilde Y_N$, $\widetilde I_S$, and
$\widetilde Y_S$ of flow trees with positive puncture at $e$, and are rigidified by the condition that their $1$-jet lift passes through $q$.
The boundaries of these $1$-parameter families are
as follows:
\begin{align*}
\pa \widetilde I_N = (E_1\,\#\, I_N) \cup (E_2\,\#\, I_N),&\quad
\pa \widetilde Y_N = (E_1\,\#\, Y_N) \cup (E_2\,\#\, Y_N),\\
\pa \widetilde I_S = (E_1\,\#\, I_S) \cup (E_2\,\#\, I_S),&\quad
\pa \widetilde Y_S = (E_1\,\#\, Y_S) \cup (E_2\,\#\, Y_S).
\end{align*}
Here $E_1\,\#\, I_N$ denotes the broken flow tree obtained by adjoining $I_N$ to $E_1$ etc., see Figure \ref{fig:onedimtreeunknot}.

Using the capping path convention and isomorphism
$H_1(\Lambda_U) = \Z \langle \mu, \lambda \rangle$ specified in \cite{EENS}, the disks $I_N, Y_N, I_S,Y_S$
contribute   $1, \lambda, \lambda\mu, \mu,$ respectively, to $\pa c.$
Similarly $E_1$ and $E_2$ contribute $-\lambda^{-1}c$ and
$\lambda^{-1}c,$ respectively, to $\partial e.$ This gives the
differential
\begin{align*}
\pa c & =  1 + \lambda + \lambda \mu + \mu \\
\notag
\pa e & =  (-\lambda^{-1} + \lambda^{-1}) c = 0.
\end{align*} 
\end{lma}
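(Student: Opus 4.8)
The plan is to prove Lemma~\ref{lma:UnknotDGA} by combining the general correspondence between rigid holomorphic disks and rigid flow trees of front function differences from \cite{Ekholm07} with a direct Morse-theoretic analysis of the perturbed front of $\overline{\Lambda}_U$ near the north and south poles of $S$. First I would set up the local model: by Lemma~\ref{liftU} the conormal lift $\overline{\Lambda}_U\cap T^*E_{2\delta_0}$ is an annulus, and after the small perturbation near the cone point (Figure~\ref{fig:resolution}) its front projection to $S^2\times\R$ becomes generic, acquiring a cusp circle and an arc of double points; by the symmetry of $S$ under reflection in the $y_1y_2$-plane and rotation about the $y_3$-axis, the picture near the north pole is a mirror image of the picture near the south pole, so it suffices to analyze one hemisphere and then double. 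The two Reeb chords $c$ and $e$ correspond, respectively, to the global maximum-type critical point of the relevant sheet difference of the two-sheeted part of the front and to a secondary critical point created by the perturbation, exactly as in the general principle recalled before the lemma (Reeb chords $\leftrightarrow$ positive critical points of $F_i-F_j$).

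Next I would enumerate the rigid flow trees. A rigid disk with one positive puncture corresponds to a rigid flow tree whose source puncture is at a Reeb chord and whose trivalent/end structure is dictated by the Morse data of the front; in the flat region near each pole the function differences are explicit (essentially the height difference of the cone sheets), so the gradient flow is completely computable. For the positive puncture at $c$: the tree must flow out of the critical point of the two-sheeted front and terminate, and the geometry near each pole produces exactly two rigid trees — an ``$I$'' type and a ``$Y$'' type (the labels in Figure~\ref{fig:rigidtrees}) — giving $I_N, Y_N$ near the north pole and $I_S, Y_S$ near the south. For the positive puncture at $e$: the rigid trees have one negative puncture and it lands at $c$; the flow analysis yields exactly the two trees $E_1, E_2$, distinguished by which side of the double-point arc they pass. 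The one-parameter families $\widetilde I_N,\widetilde Y_N,\widetilde I_S,\widetilde Y_S$ arise as the trees with positive puncture at $e$ but with the negative-puncture condition relaxed to a point constraint, and by the standard compactness/gluing theory for flow trees their boundaries are the broken configurations $E_i\,\#\,(\text{disk at }c)$, which is exactly the stated formula for $\partial\widetilde I_N$ etc. The count of \emph{constrained} rigid disks through a generic point $q$ with two sheets above it being exactly two then follows because $q$ lies in the closure of precisely two of these four families.

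Finally I would compute the homology-class coefficients. Each rigid disk boundary, closed up with the chosen capping paths, represents a class in $H_1(\Lambda_U)=\Z\langle\mu,\lambda\rangle$; tracing the four $c$-disks around the rotated cone picture, $I_N$ bounds a trivial loop and contributes $1$, while $Y_N, I_S, Y_S$ pick up $\lambda$, $\lambda\mu$, $\mu$ respectively from winding around the longitude and/or meridian directions (this is where the specific capping-path convention of \cite{EENS} is invoked verbatim). For $\partial e$, the two disks $E_1, E_2$ have boundary class $\lambda^{-1}$ (with the orientation sign giving opposite overall signs $-1$ and $+1$), hence contribute $-\lambda^{-1}c+\lambda^{-1}c=0$. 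Assembling these gives $\partial c = 1+\lambda+\lambda\mu+\mu$ and $\partial e = 0$, as claimed; that $(\KCA(U),\partial)$ so computed agrees with the known knot contact homology of the unknot is an immediate check. The main obstacle is the flow-tree census near the poles: one must be certain that the perturbation of the cone front in Figure~\ref{fig:resolution} is generic enough that \emph{only} the six rigid trees (and the four one-parameter families) occur, with no spurious higher-valence trees or trees escaping the flat region $E_{3\delta_0}$ — this is precisely the content of the cited analysis in \cite[\S3]{EENS} together with the confinement estimate of Lemma~\ref{lma:confine}, which bounds disk area and keeps every relevant tree inside the flat neighborhood where the explicit Morse picture is valid.
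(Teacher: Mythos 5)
Your outline matches the approach the paper takes: this lemma is not proved here but imported from \cite[\S 3]{EENS}, and the surrounding Subsections~\ref{scheme} and~\ref{ssec:unknot} describe exactly your strategy --- degenerate to the conormal torus of the unknot, perturb the front near the cone points over the poles (Figure~\ref{fig:resolution}), enumerate rigid flow trees via \cite{Ekholm07}, and read off the homology coefficients from the disk boundaries on the torus (Figure~\ref{fig:diskboundaries}). One small correction: Lemma~\ref{lma:confine} confines disks in the cotangent-fiber direction ($|p|<2\delta_0$), not to the flat polar caps $E_{3\delta_0}$ of the base, so the completeness of the flow-tree census near the poles rests entirely on the analysis in \cite[\S 3]{EENS}, which you do also cite.
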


As in \cite[\S 3]{EENS}, it suffices to consider constrained rigid trees when studying the filtered contributions of the curves. 
(See the third result of Theorem~\ref{thm:close} below.)
Although not needed for the purposes of calculating the filtered differential in this paper, it is possible, as mentioned in 
\cite[\S 3]{EENS},  to prove that 1-parameter families of holomorphic disks with positive puncture at $e$ are in natural one-to-one correspondence with the $1$-parameter families of flow trees mentioned in Lemma \ref{lma:UnknotDGA}.

\subsection{Intersection numbers for disks with boundary on $\overline{\Lambda}_U$}\label{filtereduk}

Now that we have recalled the computation of the knot contact homology of the standard unknot in the $x_1x_2$-plane, we turn to computing the filtration on this knot thought of as a transverse knot.

\begin{thm}
Let $U$ be the transverse unknot with self-linking number $-1.$ The
filtered DGA $(\KCA^-(U),\pa^-)$ is filtered stable tame isomorphic to
the algebra over $R[U,V]$ generated by $c$ and $e,$ where $|c|=1$ and
$|e|=2,$ and
\label{thm:unknot}
\[
\pa^- c= 
U+\lambda+\lambda\mu V+\mu \quad \text{ and }
\quad \partial^- e=0.
\]
\end{thm}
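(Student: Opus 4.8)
The strategy is to combine the known unfiltered computation of $(\KCA(U),\pa)$ from Lemma~\ref{lma:UnknotDGA} with a determination of the two intersection numbers $n_U(u)$ and $n_V(u)$ for each of the six rigid holomorphic disks $I_N, Y_N, I_S, Y_S, E_1, E_2$ contributing to the differential. By Theorem~\ref{thm:filterdiff} the filtered differential has exactly the same ``disk content'' as the unfiltered one, only decorated by monomials $U^{n_U(u)}V^{n_V(u)}$; so once these six pairs of integers are found, the formula for $\pa^-$ is immediate and $\pa^- e = 0$ follows because the $E_1$ and $E_2$ contributions to $\pa e$ already cancel and, having the same positive puncture $e$ and the same negative puncture $c$, by symmetry of the geometry under reflection in the $y_1y_2$-plane they must carry the same $U$- and $V$-powers, so they still cancel after decoration.

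First I would set up the explicit model: take $U$ to be the circle of radius $r<\delta$ in the $x_1x_2$-plane, so by Lemma~\ref{liftU} the relevant part of $\overline\Lambda_U$ near the poles of $S$ is explicitly parameterized, and by Lemma~\ref{lma:subvariety} the hypersurfaces $\overline H_\pm(2\delta_0)$ are the complex lines $(u_1+iv_1, u_2+iv_2) = (\pm i\zeta,\zeta)$ in the flat chart $T^*E_{2\delta_0}\cong\C^2$. By Lemma~\ref{lma:disjoint2} every rigid disk stays in the region $|p|<2\delta_0$, so all intersections with $\overline H_\pm$ occur inside these flat charts where everything is honestly $J_0$-holomorphic; this reduces each $n_U(u),n_V(u)$ to counting (with sign, but all signs are positive by positivity of intersections between $J_0$-holomorphic objects) the intersections of an explicit disk with an explicit complex line. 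Then, using the flow-tree description of the disk boundaries in Figures~\ref{fig:rigidtrees} and~\ref{fig:onedimtreeunknot} (and the scheme of Subsection~\ref{scheme}), I would track where each of $I_N, Y_N, I_S, Y_S$ crosses $\overline H_+$ and $\overline H_-$. The expected outcome, read off from the self-linking number $-1$ normalization and consistency with Theorem~\ref{thm:topNOTtran}, is that $I_N$ meets $\overline H_+$ once and $\overline H_-$ not at all (contributing $U$), $I_S$ contributes the $\lambda\mu$-term decorated by $V$ (i.e. meets $\overline H_-$ once), $Y_N$ contributes $\lambda$ undecorated, and $Y_S$ contributes $\mu$ undecorated; I would verify this by direct inspection of the intersection of the north/south disks with the complex lines $(\pm i\zeta,\zeta)$.

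The main obstacle is the bookkeeping in the previous step: one must be careful about which sheet of $\overline\Lambda_U$ a given disk's boundary lies on near the poles, about orientations and the signs $\sigma(u)$ (which are already pinned down in \cite{EENS} and unchanged here), and — most delicately — about the fact that $\overline H_\pm$ as defined globally is not the complex line but only agrees with it inside $T^*E_{2\delta_0}$; Lemma~\ref{lma:disjoint2} is exactly what guarantees this is harmless, but one still has to invoke it for each disk, including the $E_1, E_2$ disks which have their negative puncture at $c$ and hence a nontrivial $\lambda^{\pm1}$-coefficient. A secondary point is justifying that $\pa^- e=0$ rather than merely landing in a nonzero multiple of $c$: here I would argue either by the explicit reflection symmetry $y_3\mapsto -y_3$ of $S$ (which swaps $\overline H_+\leftrightarrow\overline H_-$ but preserves the pair $\{E_1,E_2\}$ up to the relabeling that already produces the cancellation $-\lambda^{-1}+\lambda^{-1}$), or, more robustly, by noting that $(\pa^-)^2=0$ from Theorem~\ref{thm:filterdiff} forces $\pa^-e$ to be a cycle of the right degree whose only candidate is a scalar multiple of $1+\lambda+\lambda\mu V+\mu$, which is ruled out on $U,V$-degree grounds since $\pa^-e$ must have total $(U,V)$-degree compatible with both $E_1$ and $E_2$ carrying the same decoration. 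Finally, to conclude the theorem as stated — a \emph{filtered stable tame isomorphism} statement — I would observe that the perturbation of $\Lambda_U$ used to obtain exactly two Reeb chords is a transverse isotopy (the underlying transverse unknot of self-linking $-1$ is unchanged), so Theorem~\ref{thm:filterinv} applies and the resulting DGA is well-defined up to filtered stable tame isomorphism.
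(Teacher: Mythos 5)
Your proposal is correct and follows essentially the same route as the paper: reduce everything to the flat charts $T^*E_{2\delta_0}$ over the poles via Lemma~\ref{lma:disjoint2}, where $\overline{H}_\pm$ are honest complex lines, and compute the intersection numbers of the four disks $I_N,Y_N,I_S,Y_S$ (described as flow-line strips) with those lines; the resulting decorations $U,1,V,1$ on the four terms of $\pa c$ are exactly what the paper finds. Two small points of difference in execution: for the ``delicate bookkeeping'' you flag (which of the two strips is $I_N$ and which is $Y_N$ depends on the perturbation, and the holomorphic disks are only $C^1$-close to the strips), the paper resolves the ambiguity not by pure inspection but by noting that the difference cycle $Y_N-I_N$ is homologous to the cotangent-fiber disk $D_+$ over the pole, whose intersection number with $\overline{H}_\pm$ is $\mp1$, and then invoking positivity to pass from relative to absolute counts; and for $\pa^- e=0$ the paper needs neither your symmetry nor your $(\pa^-)^2=0$ argument, since $E_1,E_2$ lie near the equator and hence, by Lemma~\ref{lma:disjoint2}, miss $\overline{H}_\pm$ entirely, so their unfiltered cancellation persists verbatim.
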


\begin{proof}

From the explicit description of the knot contact homology differential in Lemma~\ref{lma:UnknotDGA}, it suffices to compute the intersection numbers of the four holomorphic disks $I_N,Y_N,I_S,Y_S$ with $\overline{H}_\pm.$
To this end, we will compute the intersection numbers of $\overline{H}_\pm$ with a disk $D_+$ homotopic to ``$Y_N - I_N$" to determine the  relative intersection numbers of $Y_N$ and $I_N$ with $\overline{H}_\pm.$ The absolute intersection numbers will follow from positivity of intersections. A similar argument will apply to $I_S$ and $Y_S.$

We first note that Lemma~\ref{lma:disjoint2} implies that any point in
the intersection between $\overline{H}_\pm$ and a holomorphic disk
with boundary on $\Lambda_U$ must lie in $T^{\ast}E_{2\delta_0}$ so it
will be sufficient to consider the parts of the holomorphic disks
which lie in this region.  As we shrink the unknot $U$ towards the
$x_3$ axis in $\R^3$, its conormal lift $\Lambda_U$ approaches the
$0$-section in $J^{1}(S).$ We can also see that the perturbed front
generic version of $\Lambda_U$ collapses onto the $0$-section. As we
degenerate $\Lambda_U$ onto the $0$-section, the boundaries of the
holomorphic disks converge to the curves on the torus $\Lambda_U$ depicted in Figure~\ref{fig:diskboundaries}.
\begin{figure}[htb]
  \relabelbox \small {
  \centerline{\epsfbox{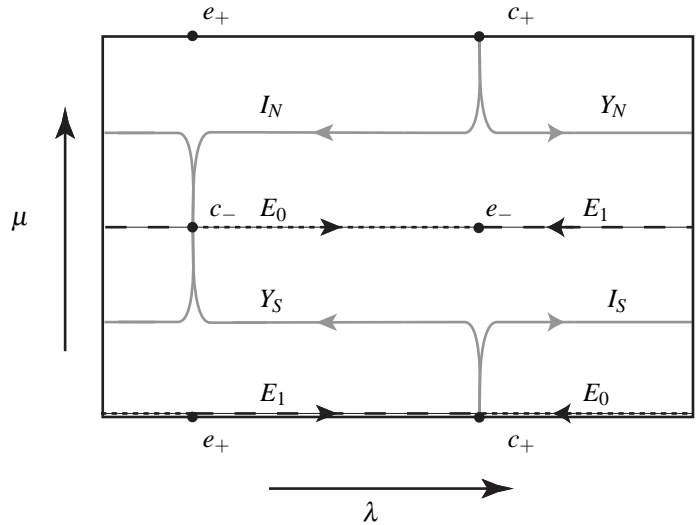}}}
  \relabel{c1}{$c_+$}
  \relabel{c}{$c_-$}
  \relabel{c2}{$c_+$}
  \relabel{e1}{$e_+$}
  \relabel{e2}{$e_+$}
  \relabel{e}{$e_-$}
  \relabel{i1}{$I_N$}
  \relabel{i2}{$I_S$}
  \relabel{y1}{$Y_N$}
  \relabel{y2}{$Y_S$}
    \relabel{e11}{$E_0$}
        \relabel{e12}{$E_0$}
  \relabel{e22}{$E_1$}
    \relabel{e21}{$E_1$}
  \relabel{m}{$\mu$}
  \relabel{l}{$\lambda$}
  \endrelabelbox
\caption{Boundaries of holomorphic disks on $\Lambda_U$.
}
\label{fig:diskboundaries}
\end{figure}

The Legendrian torus $\Lambda_U$ can be described as the $1$-jet of a multifunction from $S^2$ to $\R$ and we notice that
the curves in Figure~\ref{fig:diskboundaries} are given by gradient flow lines for the functional differences of the multifunction.
Furthermore, for an appropriate almost complex structure, the holomorphic disks $C^{1}$-converge to the strips corresponding to these flow lines outside any fixed neighborhood of its vertices, see \cite[Lemma 5.13, Remark 5.14, and Subsection 6.4]{Ekholm07}, and inside neighborhoods of its vertices they converge to other local models, see \cite[Subsection 6.1]{Ekholm07}. Here the strip of a flow line consists of the line segments in the cotangent fibers between its cotangent lift. In particular, if we choose a perturbation of $\Lambda_U$ so that its projection to $T^{\ast} S$ consists of affine subspaces in a neighborhood of $(0,0,1)\in S$ then the standard complex structure for which $\overline{H}_{\pm}$ is a complex hypersurface is appropriate in the above sense, see \cite[Subsection 4.1, $2^{\rm nd}$ bullet from the end]{Ekholm07}.

Consider the
flow-line disks depicted in Figure \ref{fig:intersectionnumbers}.
These flow
lines lie in the two distinct homotopy classes of the disks $I_N$ and
$Y_N$. (Which of these disks looks like the letter $I$
or $Y$ depends on the perturbation of $\Lambda_U$ which makes
its front generic, see also Figure~\ref{fig:onedimtreeunknot}.)
\begin{figure}[htb]
  \relabelbox \small {
  \centerline{\epsfbox{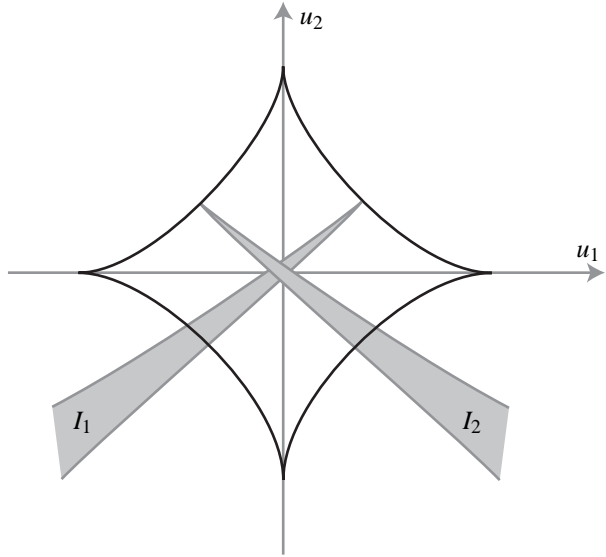}}}
  \relabel{1}{$I_1$}
  \relabel{2}{$I_2$}
  \relabel{u1}{$u_1$}
  \relabel{u2}{$u_2$}
  \endrelabelbox
\caption{Two holomorphic disks in distinct homotopy classes. We have chosen coordinates on $S^2$ near the north pole so that the north pole corresponds to $(0,0).$}
\label{fig:intersectionnumbers}
\end{figure}
The disks $I_1$ and $I_2$ correspond to flow lines of the difference
of two functions that locally describe part of $\Lambda_U$ near the
north pole.
With notation and coordinates near the north pole of $S^2$ indicated in Figure~\ref{fig:intersectionnumbers}, the gradient of the function describing the upper sheet of $I_1$ equals $-\pa_{u_1}$ and that of its lower sheet is $\pa_{u_2},$ and the flow line corresponding to  $I_1$ in $S$ follows the curve $C_1=(t,t)$, $-T\le t\le \epsilon$ where $\epsilon>0$ is arbitrarily small and not yet fixed. Thus the flow line strip of $I_1$ is
\[
(t,s)\mapsto ((t,t), (s(-\pa_{u_1})+(1-s)\pa_{u_2})), \quad -T\le t\le \epsilon,\,\,0\le s\le 1,
\]
near the origin. Similarly, the flow line strip of $I_2$ is
\[
(t,s)\mapsto ((-t,t), (s\pa_{u_1}+(1-s)\pa_{u_2})), \quad -T\le t\le \epsilon,\,\,0\le s\le 1.
\]
On the other hand the intersection of $\overline{H}_+$ with the fibers of $T^{\ast} S$ over the point $(t,t)\in C_1$ is $a( t\pa_{u_1}-t\pa_{u_2})$ for some $a>0$, see Equation~\eqref{eq:localH}. That is,
\[
\overline{H}_+\cap T^{\ast}C_1=\{((t,t), a(t\pa_{u_1}-t\pa_{u_2}))\},
\]
and we see that the strip and the hypersurface intersect once (over the point $(-\frac{1}{2a},-\frac{1}{2a})$). Similarly, over the flow line of $I_2$ we have
\[
\overline{H}_+\cap T^{\ast}C_1=\{((-t,t), a(t\pa_{u_1}+t\pa_{u_2}))\},
\]
and we see that the strip and the hypersurface do not intersect: the solution of the equation which sets the fiber coordinates equal would lie over the point $(-t,t)$ where $t=\frac{1}{2a}$ but if we
set $\epsilon < \frac{1}{2a},$ this point does not lie on the flow line of $I_2$.

In order to relate the above calculation to Lemma \ref{lma:UnknotDGA},	 consider the disk $D_\pm$ in the fiber of $T^*S$ bounded by the circle $((0,0,\pm1),p)\in \overline\Lambda_U\cap T^{\ast}E_{2\delta_0}$
and oriented according to the induced orientation on the fiber. We notice that this circle is a lift of a longitude for $U.$
Using the description of $\overline H_{\pm}$ in Lemma~\ref{lma:subvariety} it is easy to see that $D_\pm$ intersects $\overline H_{\pm}$ with intersection number $\mp 1.$  (Note here that the orientation of the base followed by the
orientation of the fiber gives the orientation opposite to the complex
orientation on $T^{\ast}S$ and that the orientation induced on the
normal bundle to the fiber by $\overline H_{+},$ respectively
$\overline H_{-},$ agrees, respectively disagrees, with the
orientation on the base.)
Consulting Figure~\ref{fig:diskboundaries} and considering the
algebraic topology of $\Lambda_U\subset T^*S^2$ one may easily see
that the difference cycle between the disk corresponding to $I_N$ and
$Y_N$ is homologous to $D_+$. Since the intersection number of these
disks with $\overline{H}_+$ equals $1$ or $0$ (by the above
calculation)  it follows that the disk corresponding to $I_N$ has
intersection number $1$ with $\overline{H}_+$ and the disk
corresponding to $Y_N$ has intersection number $0$. Similarly, the
disks corresponding to $I_S$ and $Y_S$ intersect $\overline{H}_-$ with
intersection number $1$ and $0$, respectively. In addition, since
$D_{\mp}$ does not intersect $\overline{H}_{\pm}$, we find that the
$I_S$ and $Y_S$ (respectively $I_N$ and $Y_N$) disks do not intersect
$\overline{H}_+$ (respectively $\overline{H}_-$).

\end{proof}


\subsection{The filtered DGA of a transverse link}\label{ssec:proof}
We begin by recalling the computation of the knot contact homology from \cite[Theorem 1.1]{EENS}. Using the notation in the introduction we have the following result.
\begin{thm}[Ekholm--Etnyre--Ng--Sullivan 2011, \cite{EENS}] \label{KnotDGA.thm}
The differential in the Legendrian DGA associated to the conormal lift $\Lambda_K$ of a framed knot $K$ is $(\KCA(K),\pa)$ where $\KCA(K)$ is generated over $\Z[\mu^{\pm}, \lambda^{\pm}]$ by the $a_{ij},b_{ij}, c_{ij},$ and $e_{ij}$ described in  Subsection~\ref{scheme} and~\ref{ssec:unknot} and
the map $\pa\colon \KCA(K)\to\KCA(K)$ is determined by the following matrix equations:
\begin{align*}
\partial \mathbf{A} &= 0, \\
\partial \mathbf{B} &= -\Ll^{-1}\cdot\Aa\cdot\Ll\,\, +\,\, \Phi^L_B \cdot \Aa \cdot \Phi^R_B, \\
\pa \mathbf{C} &= \Aa\cdot\Ll\,\, +\,\, \Aa\cdot\Phi^{R}_B, \\
\pa \mathbf{E} &= \Bb\cdot(\Phi_B^{R})^{-1}\,\, +\,\, \Bb\cdot\Ll^{-1}\,\, -\,\,
\Phi^L_B\cdot\Cc\cdot\Ll^{-1}\,\, +\,\, \Ll^{-1}\cdot\Cc \cdot (\Phi^R_B)^{-1},
\end{align*}
where $\mathbf{A},\mathbf{B},\mathbf{C},\mathbf{E},\Ll,\Phi^L_B,\Phi^R_B$ are as in
Subsection~\ref{subsec:computation}, and if $\Mm$ is an $(n\times
n)$-matrix, the matrix $\pa \Mm$ is defined by $(\pa \Mm)_{ij}=\pa
\Mm_{ij}$.
\end{thm}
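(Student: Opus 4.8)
The statement is precisely the main computation of \cite{EENS}, so the plan is to recall the strategy carried out there rather than to reprove the underlying analysis. The approach is to degenerate the braided link $K$ toward the unknot $U$ so that $\Lambda_K$ becomes an $n$-sheeted multisection of $J^1(\Lambda_U)$, as in Subsection~\ref{scheme}. Under this degeneration the Reeb chords split into the two types listed there, producing the generators $a_{ij},b_{ij}$ (from critical points of the local function differences $F_i-F_j$) and $c_{ij},e_{ij}$ (from the chords $c,e$ of $\Lambda_U$ decorated by sheet labels). The rigid holomorphic disks likewise split into type $\mathbf{I}$ disks, modeled on the unknot disks of Lemma~\ref{lma:UnknotDGA} with flow trees of $F_i-F_j$ attached along the boundary, and type $\mathbf{II}$ disks, which are themselves rigid flow trees in a neighborhood of $\Lambda_U$; the identification of holomorphic disks with flow trees is the content of \cite{Ekholm07}.

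I would then establish the four matrix equations one degree at a time. The equation $\pa\Aa=0$ is forced by the grading: since $a_{ij}$ has degree $0$ and all generators, as well as the classes $\mu,\lambda$, are nonnegatively graded, $\pa a_{ij}$ would have to be a word of degree $-1$, and no such word exists. For $\pa\Bb$, the generators $b_{ij}$ are maxima of $F_i-F_j$, and the rigid flow trees emanating from a maximum record the two distinct ways its boundary can be transported around the braid axis: the monodromy of the trivial braid, contributing $-\Ll^{-1}\Aa\Ll$, and the actual braid monodromy $\phi_B$, contributing $\Phi^L_B\Aa\Phi^R_B$. This is exactly the conjugation formula in the statement, and it reflects that the $a_{ij}$ are compatible with the relation $\phi_B(\Aa)=\Phi^L_B\Aa\Phi^R_B$ fixed in Subsection~\ref{subsec:computation}. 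For $\pa\Cc$, the four disks $I_N,Y_N,I_S,Y_S$ of Lemma~\ref{lma:UnknotDGA}, which contribute $1,\lambda,\lambda\mu,\mu$ to $\pa c$ for the bare unknot, acquire sheet labels and boundary flow trees when $K$ is an $n$-strand braid; bookkeeping the resulting sheet indices and capping-path homology classes assembles their contributions into the two matrix products $\Aa\Ll$ and $\Aa\Phi^R_B$.

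The delicate step is $\pa\Ee$. Here the relevant configurations come from the broken flow trees $E_i\,\#\,I$ and $E_i\,\#\,Y$ that appear as boundaries of the one-parameter families $\widetilde I_N,\widetilde Y_N,\widetilde I_S,\widetilde Y_S$ in Lemma~\ref{lma:UnknotDGA}, now decorated by the braid. Tracking how these families break at their two ends, together with the sheet labels carried by $\Bb$ and $\Cc$ and the matrices $\Phi^L_B,\Phi^R_B,\Ll$ recording the monodromy and framing, produces the four terms $\Bb(\Phi^R_B)^{-1}+\Bb\Ll^{-1}-\Phi^L_B\Cc\Ll^{-1}+\Ll^{-1}\Cc(\Phi^R_B)^{-1}$.

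I expect the main obstacle to be essentially bookkeeping rather than analysis: getting the signs $\sigma(u)$, the placement of each matrix on the correct side, and the capping-path homology decorations all simultaneously correct, since the underlying analytic facts (transversality, the flow-tree correspondence, and the broken-disk compactness structure) are already supplied by \cite{Ekholm07,EkholmEtnyreSullivan07}. Verifying $\pa^2=0$ with these explicit formulas serves as the consistency check that the decorations have been assembled correctly.
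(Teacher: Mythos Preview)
Your proposal is appropriate: this theorem is quoted from \cite{EENS} and is not given an independent proof in the present paper; the paper merely recalls the strategy in Subsection~\ref{scheme} and Lemma~\ref{lma:UnknotDGA}, exactly the ingredients you invoke. Your sketch of the degeneration of $\Lambda_K$ to an $n$-sheeted cover of $\Lambda_U$, the splitting of chords and disks into types $\mathbf{I}$ and $\mathbf{II}$, and the identification of the matrix terms with the unknot disks $I_N,Y_N,I_S,Y_S,E_0,E_1$ and their one-parameter families matches that outline, so there is nothing to correct.
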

In Subsection~\ref{scheme} above we discussed the holomorphic curves involved in the computation of the differential. In particular, one may easily conclude the following result.
\begin{lma}[Ekholm--Etnyre--Ng--Sullivan 2011, \cite{EENS}]\label{lem:close}
Given any $\delta'>0,$ $\Lambda_K$ can be Legendrian isotoped to be close enough to $\Lambda_U$ so that any
holomorphic disk with boundary on $\overline\Lambda_K,$
one positive puncture, and involving only the chords $a_{ij}$ and $b_{ij},$ has its image contained within a $\delta'$-neighborhood of $\overline\Lambda_U.$
\end{lma}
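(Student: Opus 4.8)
The plan is to deduce this directly from the structural description of holomorphic disks near the collapse limit recalled in Subsection~\ref{scheme} and established in \cite[\S 3]{EENS} (via the flow tree machinery of \cite{Ekholm07}). Recall that $a_{ij}$ and $b_{ij}$ are exactly the ``small'' Reeb chords of type $\mathbf{II}$, arising as saddle points and maxima of the functional differences $F_i-F_j$, whereas the chords of type $\mathbf{I}$ are the $c_{ij}$ and $e_{ij}$. By the cited results, the type of a holomorphic disk with one positive puncture is inherited from its positive puncture: a disk whose positive puncture is at a chord of type $\mathbf{II}$ is itself of type $\mathbf{II}$, and then all of its negative punctures are at chords of type $\mathbf{II}$ and its image lies in an arbitrarily small neighborhood of $\Lambda_K$, being modeled on a rigid flow tree of the $F_i-F_j$. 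Hence a disk ``involving only the chords $a_{ij}$ and $b_{ij}$'' is automatically such a disk; it remains only to upgrade ``neighborhood of $\Lambda_K$'' to ``$\delta'$-neighborhood of $\Lambda_U$''.

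First I would fix the Legendrian isotopy collapsing $\Lambda_K$ onto $\Lambda_U$ so that $\overline{\Lambda}_K$ lies in the $\tfrac{\delta'}{3}$-neighborhood of $\overline{\Lambda}_U$ in $T^{\ast}S$ and so that $\max_{i\ne j}\|F_i-F_j\|_{C^{0}}<\eta$ for a constant $\eta>0$ to be chosen below. Since the action of any chord $a_{ij}$ or $b_{ij}$ is a positive critical value of some $F_i-F_j$, each such action is less than $\eta$. For a disk $u$ of the type above, with single positive puncture at such a chord, Stokes' theorem bounds the symplectic area of $u$ by the action of the positive chord minus the nonnegative actions at the negative punctures, hence by $\eta$. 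Then, exactly as in the proof of Lemma~\ref{lma:confine}, the monotonicity inequality supplies a constant $C_0$ (depending only on $J$) such that if the image of $u$ left the $\delta'$-neighborhood of $\overline{\Lambda}_U$, it would reach a point at distance at least $\tfrac{2\delta'}{3}$ from its boundary $\partial u\subset\overline{\Lambda}_K$, forcing $\area(u)\ge C_0(\delta')^{2}$. Choosing $\eta<C_0(\delta')^{2}$ then rules this out, which is the assertion of the lemma. (Alternatively, one may quote the $C^{0}$-convergence of type $\mathbf{II}$ disks to their flow trees from \cite{Ekholm07}; the trees lie on $\overline{\Lambda}_U$ up to an error governed by the $C^{1}$-size of the multisection, which we shrink.)

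The only step that is not formal is the uniform confinement of type $\mathbf{II}$ disks, i.e.\ the absence of ``long'' rigid disks or flow trees that fail to shrink as $\Lambda_K\to\Lambda_U$; this is precisely the content of the structure theorem of \cite[\S 3]{EENS} together with the flow tree correspondence of \cite{Ekholm07}, where rigid flow trees built from the $F_i-F_j$ have uniformly bounded combinatorial complexity, so their diameter on $\Lambda_U$ is controlled and tends to $0$ with the $C^{1}$-size of the $F_i-F_j$. Granting that input, the area bound and the monotonicity estimate above are routine. I would also note that the collapsing isotopy may be kept within the class of $n$-sheeted multisections of $J^{1}(\Lambda_U)$, so the disjointness, confinement, and regularity statements of Lemmas~\ref{lma:smallaction}--\ref{lma:tv1} persist along it, and only the $0$- and $1$-dimensional moduli spaces are involved, matching the setting in which those results were established.
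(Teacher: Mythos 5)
Your proof is correct and takes essentially the same approach as the paper: the paper's (very brief) proof likewise observes that the chords $a_{ij}$ and $b_{ij}$ have action that can be made arbitrarily small as $\Lambda_K$ collapses onto $\Lambda_U$, and then invokes a monotonicity argument to confine the disks near $\overline\Lambda_U$, citing \cite[\S 3.4]{EENS} for the structural description as an alternative. Your second paragraph is exactly this argument written out in detail (and, as you implicitly note, the hypothesis that the disk involves only $a_{ij}$ and $b_{ij}$ already makes the area bound available without needing the type $\mathbf{I}$/$\mathbf{II}$ dichotomy).
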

\begin{proof}
While this lemma follows from the results in  \cite[\S 3.4]{EENS}, we comment that it can also be seen by observing that the Reeb chords $a_{ij}$ and $b_{ij}$ have small action. This action can be made arbitrarily small as we isotop $K$ to be close to $U.$ Now a monotonicity argument will confine the holomorphic curves to stay close to $\overline\Lambda_U.$
\end{proof}
To compute the filtration on a transverse knot we will need to explicitly describe the holomorphic disks used in the computation of the differential. More specifically we need to understand disks of Type {\rm I}, discussed in Subsection~\ref{scheme}. To this end we summarize the computations from \cite[\S 4.4]{EENS}.

\begin{thm}[Ekholm--Etnyre--Ng--Sullivan 2011, \cite{EENS}] \label{thm:close}
Given any $\delta'>0,$ $\Lambda_K$ can be Legendrian isotoped to be close enough to $\Lambda_U$ so that any
rigid holomorphic disk with boundary on $\overline\Lambda_K$ either has its image contained within a $\delta'$-neighborhood of $\overline\Lambda_U$ and one of the rigid disks described in Lemma~\ref{lma:UnknotDGA}, or has its image contained within a $\delta'$-neighborhood of $\overline\Lambda_U$ and one of the constrained rigid disks described in Lemma~\ref{lma:UnknotDGA} constrained by the projection $q \in \Lambda_{U}$ of one of the endpoints of a Reeb chord $b_{ij}$.
\begin{enumerate}
\item
The holomorphic disks that contribute to the terms in $\partial c_{ij}$ satisfy
\begin{itemize}
\item
the terms in $\Aa \Ll$ with $\mu$ coefficients are contained in neighborhoods of  $I_S$ (and $\overline\Lambda_U$),
\item
the terms in  $\Aa \Ll$ without $\mu$ coefficients
 are contained in neighborhoods of   $Y_N$ (and $\overline\Lambda_U$),
\item
the terms in  $\Aa \Phi^R_B$ with $\mu$ coefficients
 are contained in neighborhoods of  $Y_S$ (and $\overline\Lambda_U$), and
\item
the terms in  $\Aa \Phi^R_B$ without $\mu$ coefficients
 are contained in neighborhoods of  $I_N$ (and $\overline\Lambda_U$).
\end{itemize}

\item
The holomorphic disks that contribute to the $\Phi^L_B\cdot\Cc\cdot\Ll^{-1}\,\, +\,\, \Ll^{-1}\cdot\Cc \cdot (\Phi^R_B)^{-1}$  terms in $\partial e_{ij}$  are contained in neighborhoods of  either  $E_0$ or $E_1$ (and $\overline\Lambda_U$).

\item
The holomorphic disks that contribute to the $\Bb\cdot(\Phi_B^{R})^{-1}\,\, +\,\, \Bb\cdot\Ll^{-1}$ terms of $\pa e_{ij}$ are  contained in neighborhoods of one of the disks in the 1-parameter families $\widetilde I_N$, $\widetilde Y_N$, $\widetilde I_S$, and $\widetilde Y_S$ (and $\overline\Lambda_U$). That is, they are close to the union of $\overline\Lambda_U\cup E_1\cup E_2$ and one of the disks $I_N, Y_N, I_S, Y_S,$ shifted in the $\lambda$-direction some fixed distance (the same distance for all disks). More precisely,
\begin{itemize}
\item
disks associated to those terms in $\Bb \Ll^{-1}$ with $\mu$ coefficients
correspond to $Y_S,$
\item
disks associated to those terms in $\Bb \Ll^{-1}$ without $\mu$ coefficients
correspond to $I_N,$
\item
disks associated to those terms in $\Bb \left(\Phi^R_B\right)^{-1}$ with $\mu$ coefficients
correspond to $I_S,$ and
\item
disks associated to those terms in $\Bb \left(\Phi^R_B\right)^{-1}$ without $\mu$ coefficients
correspond to $Y_N.$
\end{itemize}
\end{enumerate}
\end{thm}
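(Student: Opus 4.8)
The plan is to deduce the statement directly from the disk analysis already carried out in \cite[\S\S 3--4]{EENS}, together with the holomorphic-disk/flow-tree correspondence of \cite{Ekholm07}: the theorem is a refinement of the disk count underlying Theorem~\ref{KnotDGA.thm}, so no new analytic input is needed and the work is essentially bookkeeping. First I would recall the degeneration picture of Subsection~\ref{scheme}. As $\Lambda_K$ is Legendrian isotoped toward $\Lambda_U$ (with multiplicity $n$), a monotonicity argument (Lemma~\ref{lem:close}) confines the relevant disks to a small neighborhood of $\overline\Lambda_U$, and every rigid $J$-holomorphic disk on $\overline\Lambda_K$ with one positive puncture converges either to a rigid flow tree of some functional difference $F_i-F_j$ (these are the Type~$\mathbf{II}$ disks, relevant only to $\pa\Aa$ and $\pa\Bb$, hence not needed here) or to a broken configuration consisting of one of the six model disks of Lemma~\ref{lma:UnknotDGA} on $\overline\Lambda_U$ with rigid flow trees of the $F_i-F_j$ attached along its boundary (Type~$\mathbf{I}$). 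Since a disk with positive puncture at a chord of type $\mathbf{I}$ is itself of type $\mathbf{I}$, the disks contributing to $\pa c_{ij}$ converge onto one of $I_N,Y_N,I_S,Y_S$, while those contributing to $\pa e_{ij}$ converge either onto $E_0$ or $E_1$, or onto a boundary point of one of the $1$-parameter families $\widetilde I_N,\widetilde Y_N,\widetilde I_S,\widetilde Y_S$ of Lemma~\ref{lma:UnknotDGA} (the latter rigidified by the condition that the flow-tree tail pass through the projection $q\in\Lambda_U$ of an endpoint of a chord $b_{ij}$).

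The second step is to match each entry of the matrix formula of Theorem~\ref{KnotDGA.thm} with its model disk, reading off the homology-class computations of \cite[\S 4.4]{EENS}. For $\pa\Cc$ the two summands $\Aa\cdot\Ll$ and $\Aa\cdot\Phi^R_B$ record the two ways the flow-tree tail can run around the solid torus neighborhood of $U$ --- across the seam where the braid closure is cut and reglued (contributing the monodromy matrix $\Ll$) or through the braid itself (contributing $\phi_B$, hence $\Phi^R_B$) --- and these correspond geometrically to the pairs $\{Y_N,I_S\}$ and $\{I_N,Y_S\}$, just as for the unknot, where $\pa c=1+\lambda+\lambda\mu+\mu$ with $\lambda,\lambda\mu$ coming from the $\Ll$-block and $1,\mu$ from the $\Phi^R_B$-block. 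Within each pair the coefficient $\mu_{\alpha(j)}$ --- which by the definition of $\Aa$ occurs exactly for the entries with $i>j$, i.e.\ when the flow-tree boundary runs along the lower local sheet and hence wraps the circle of $\Lambda_U$ lying over the south pole --- singles out the ``$S$'' model from the ``$N$'' one; this yields the four bullets of item (1). A parallel reading of the $\Cc$- and $\Bb$-terms of $\pa\Ee$ gives items (2) and (3): the $\Phi^L_B\cdot\Cc\cdot\Ll^{-1}$ and $\Ll^{-1}\cdot\Cc\cdot(\Phi^R_B)^{-1}$ terms carry the negative $c_{ij}$-puncture, so they come from configurations built on $E_0$ or $E_1$, while the $\Bb\cdot(\Phi^R_B)^{-1}$ and $\Bb\cdot\Ll^{-1}$ terms carry a negative $b_{ij}$-puncture, so they come from the constrained $1$-parameter families, with the assignment to $I_N,Y_N,I_S,Y_S$ again dictated by the $\Ll^{-1}$ versus $(\Phi^R_B)^{-1}$ factor and by the $\mu$-coefficient.

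The only delicate point --- and the step I expect to cost the most effort --- is verifying that the algebraic $\mu$-coefficient really corresponds to the geometric hemisphere, so that the presence of $\mu$ matches the ``$S$'' model and its absence the ``$N$'' model. For this I would fix the front-generic perturbation of $\Lambda_U$ near the poles exactly as in \cite{EENS} (Figure~\ref{fig:resolution}), and then check, using the local flow-tree models of \cite[\S\S 6.1, 6.4]{Ekholm07} near the perturbed cone points, that a flow tree acquires a factor $\mu$ in its homology class precisely when it crosses the sheet changes introduced near the south pole. Once this identification is pinned down, items (1)--(3) are simply a transcription of the disk-by-disk homology computations already performed in \cite[\S 4.4]{EENS}, and the theorem follows.
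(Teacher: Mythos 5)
The paper gives no proof of this theorem at all --- it is quoted from \cite{EENS} as a summary of the computations in \cite[\S 4.4]{EENS} --- and your plan of re-deriving it from the degeneration/flow-tree analysis of \cite[\S\S 3--4]{EENS} together with a term-by-term matching of the matrix formula against the model disks $I_N,Y_N,I_S,Y_S,E_0,E_1$ and the constrained families is exactly the route those cited computations take, so this is essentially the same approach. One small imprecision worth fixing: the $\mu_{\alpha(j)}$ coefficients in $\Aa$ do not occur ``exactly for the entries with $i>j$,'' since the diagonal entries $\Aa_{ii}=1+\mu_{\alpha(i)}$ also contribute a $\mu$-term (which is why the dichotomy in the theorem is stated term-by-term rather than entry-by-entry), though this does not affect your correct geometric identification of the $\mu$-coefficient with wrapping the meridian circle over the south pole.
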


We are now in a position to prove the combinatorial expression for the
filtrations given in Theorem~\ref{thm:mainlinktv}.

\begin{proof}[Proof of Theorem~\ref{thm:mainlinktv}]
The first equation in the theorem follows for grading reasons.

The second equation in the theorem follows from Theorem~\ref{KnotDGA.thm} and the fact that none of the holomorphic disk contributing to the differential of a $b_{ij}$ can intersect $\overline{H}_\pm$ by Lemma~\ref{lem:close}.

Since the disks $E_0$ and $E_1$ lie close to the equator, Lemma~\ref{lma:disjoint2} and the third paragraph of Theorem~\ref{thm:close} imply
that  there are no intersections of $\overline{H}_{\pm}$
with the disks with positive puncture at an $e_{ij}$-chord and one negative puncture at an $c_{ij}$-chord.
Thus,
$\Phi^L_B\cdot\Cc\cdot\Ll^{-1}\,\, +\,\, \Ll^{-1}\cdot\Cc \cdot (\Phi^R_B)^{-1}$ appears in both $\pa \mathbf{E}$ and
${\pa}^- \mathbf{E}.$

The remainder of the differential involves disks that intersect $\overline{H}_{\pm}$. The formula for the filtered differential that derives from these intersections is easily derived by the intersections of
$\overline{H}_\pm$ with the disks from Lemma~\ref{lma:UnknotDGA} which
were worked out in Subsection~\ref{filtereduk}. Some simple bookkeeping
yields the desired computation.
\end{proof}

\section{Some Examples}\label{sec:ex}

In this section, we present some computations of the filtered
transverse knot DGA, $(\KCA^-,\pa^-)$. The first shows that the additional filtration structure is nontrivial; the second is an outline of a computation that shows that the filtered DGA is an effective invariant of transverse knots.

\subsection{The unknot}
We compute the transverse knot DGA for three versions of the unknot: the closure of the trivial $1$-braid and the closure of the $2$-braids $\sigma_1$ and $\sigma_1^{-1}$. The first two both represent the standard transverse unknot in $\R^3$ with self-linking number $-1$, while the third represents the transverse unknot with self-linking number $-3$. We will show that the filtered DGAs for the first two are stable tame isomorphic and are distinct from the filtered DGA for the third.

For the trivial $1$-braid, Theorem~\ref{thm:mainlinktv} or \ref{thm:unknot} yields a DGA with two generators $c,e$ and differential
\[
\pa^- c = U+\lambda+\lambda\mu V+\mu,
\quad
\pa^- e = 0.
\]

For the $2$-braid $\sigma_1$, the relevant matrices are
\[
\Phi^L_{\sigma_1} = \begin{pmatrix} -a_{21} & 1 \\
            	1 & 0
           \end{pmatrix}
,\quad
\Phi^R_{\sigma_1} = \begin{pmatrix} -a_{12} & 1 \\
            	1 & 0
           \end{pmatrix}
,\quad
\Ll = \begin{pmatrix} \lambda \mu & 0 \\
	0 & 1
      \end{pmatrix}
\]
\[
 \Aa = \begin{pmatrix}
	1+\mu &  a_{12} \\
	\mu a_{21} & 1+\mu
             \end{pmatrix}
, \quad
 \Aa^U = \begin{pmatrix}
	U+\mu &  U a_{12} \\
	\mu a_{21} & U +\mu
             \end{pmatrix}
, \quad
 \Aa^V = \begin{pmatrix}
	1+\mu V & a_{12} \\
	 \mu V a_{21} & 1+\mu V
             \end{pmatrix},
\]
\[
 \Bb^U = \begin{pmatrix}
	0 &  U b_{12} \\
	 \mu b_{21} & 0
             \end{pmatrix}
, \quad
 \Bb^V = \begin{pmatrix}
	0 & b_{12} \\
	\mu V b_{21} & 0
             \end{pmatrix}.
\]
We then calculate from Theorem~\ref{thm:mainlinktv} that the DGA for $\sigma_1$ has generators $a_{12}$,
$a_{21}$, $b_{12}$, $b_{21}$, $c_{11}$, $c_{12}$, $c_{21}$, $c_{22}$, $e_{11}$, $e_{12}$, $e_{21}$, $e_{22}$, and differential given by
$\pa^-(a_{12})=\pa^-(a_{21})=0$,
\begin{align*}
\pa^- \begin{pmatrix}
	0 & b_{12} \\
	 b_{21} & 0
             \end{pmatrix} &=
\begin{pmatrix}
	0 & -\frac{1}{\lambda\mu} a_{12}-a_{21} \\
	-\lambda\mu a_{21}-a_{12} & 0
             \end{pmatrix},
\\
\pa^- \begin{pmatrix}
c_{11} & c_{12} \\
c_{21} & c_{22}
\end{pmatrix} &=
\begin{pmatrix}
\lambda\mu+\lambda\mu^2 V-\mu a_{12} & \mu+U+a_{12} \\
\mu+U+\lambda\mu^2 V a_{21}-\mu a_{21}a_{12} & 1+\mu V+\mu a_{21}
\end{pmatrix},
\\
\pa^- \begin{pmatrix}
e_{11} & e_{12} \\
e_{21} & e_{22}
\end{pmatrix} &=
\begin{pmatrix}
b_{12}+\frac{1}{\lambda\mu}(c_{12}-c_{21}+a_{21}c_{11}) &
U b_{12}-c_{22}+a_{21}c_{12}+b_{12}a_{12}+\frac{1}{\lambda\mu}(c_{11}+c_{12}a_{12}) \\
c_{22}+\frac{1}{\lambda}b_{21}-\frac{1}{\lambda\mu}c_{11} &
\mu V b_{21}-c_{12}+c_{21}+c_{22}a_{12}
\end{pmatrix}.
\end{align*}

We want to find a tame automorphism of this algebra that sends $\pa^-$ to a stabilization of the differential for the trivial braid. We do this by
replacing generators one by one so that the new differential on all generators but $c_{12}$ and $e_{11}$ is trivial or nearly trivial, and the new differential on $c_{12}$ and $e_{11}$ is as in the case of the trivial 1-braid; for instance, since $\textstyle{\pa^-(c_{12}+\frac{1}{\mu}c_{11})} = U+\lambda+\lambda\mu V+\mu$, we can replace $c_{12}$ by $\textstyle{c_{12}-\frac{1}{\mu}c_{11}}$ to get a generator with differential $U+\lambda+\lambda\mu V+\mu$.
In full, if we apply successively the eight tame automorphisms
\begin{align*}
e_{12} &\mapsto \textstyle{
e_{12}-b_{12}c_{12}-\frac{1}{\lambda\mu} c_{12}^2} \\
c_{21} &\mapsto c_{21}-\mu Vb_{21}+c_{12}-c_{22}a_{12} \\
b_{12} &\mapsto \textstyle{
b_{12}-\frac{1}{\mu}c_{22}+\frac{1}{\lambda\mu^2} c_{11}} \\
b_{21} &\mapsto \textstyle{
b_{21}+\frac{1}{\mu}c_{11}-\lambda c_{22}} \\
e_{11} &\mapsto \textstyle{e_{11}+\frac{1}{\lambda\mu^2}c_{22}c_{11}-\frac{1}{\mu} e_{12}-\frac{1}{\lambda\mu} e_{22}+Ve_{21}} \\
c_{12} & \mapsto \textstyle{c_{12}-\frac{1}{\mu} c_{11}} \\
a_{21} &\mapsto \textstyle{a_{21}-V-\frac{1}{\mu}} \\
a_{12} &\mapsto a_{12}+\lambda+\lambda\mu V,
\end{align*}
then we obtain a DGA with the same generators but differential
\begin{gather*}
\pa^-(c_{11})=-\mu a_{12},
\qquad
\pa^-(c_{12}) = U+\lambda+\lambda\mu V+\mu,
\qquad
\pa^-(c_{22})= \mu a_{21}, \\
\pa^-(e_{12}) = -\mu b_{12},
\qquad
\pa^-(e_{21}) = \textstyle{\frac{1}{\lambda} b_{21}},
\qquad
\pa^-(e_{22}) = c_{21},\\
\pa^-(a_{12}) = \pa^-(a_{21}) = \pa^-(b_{12}) = \pa^-(b_{21}) = \pa^-(c_{21}) = \pa^-(e_{11}) = 0.
\end{gather*}
Destabilizing yields a DGA generated by $c_{12},e_{11}$ with differential $\pa^-(c_{12}) = U+\lambda+\lambda\mu V+\mu$, $\pa^-(e_{11})=0$, which agrees with the DGA for the trivial $1$-braid above.

For the $2$-braid $\sigma_1^{-1}$, we use the matrices
\[
\Phi^R_{\sigma_1^{-1}} = \begin{pmatrix} 0 & 1 \\
            	1 & -a_{21}
           \end{pmatrix}
,\quad
\Ll = \begin{pmatrix} \lambda \mu^{-1} & 0 \\
	0 & 1
      \end{pmatrix}.
\]
The expression for ${\pa}^- \mathbf{C}$ from Theorem~\ref{thm:mainlinktv} now yields in particular
\[
\pa^-(c_{11}) = \lambda\mu^{-1}+\lambda V+U a_{12}.
\]
We claim that the filtered DGA for $\sigma_1^{-1}$ differs from the filtered DGA for the trivial $1$-braid by setting $(U,V)=(0,0)$ and comparing $(\widehat{\widehat{\KCA}}, \widehat{\widehat\pa}).$
For $\sigma_1^{-1}$ we find that $\widehat{\widehat{\pa}}(c_{11}) = \lambda\mu^{-1}$, a unit in $\Z[\lambda^{\pm 1},\mu^{\pm 1}]$, and so the homology of $(\widehat{\widehat{\KCA}}, \widehat{\widehat\pa})$ is trivial. For the trivial $1$-braid, we obtain a DGA generated by $c,e$ with $\widehat{\widehat{\pa}}(c) = \lambda+\mu$ and $\widehat{\widehat{\pa}}(e) = 0$, and it is clear that this DGA has nontrivial homology.

\subsection{The knot $m(7_6)$}\label{sec:HF}
Let $K_1,K_2$ be the transverse knots given by the closures of the $4$-braids
\[
\sigma_1\sigma_2^{-1}\sigma_1\sigma_2^{-1}\sigma_3^{-1}\sigma_2\sigma_3^3,
\qquad
\sigma_1\sigma_2^{-1}\sigma_1\sigma_2^{-1}\sigma_3^3\sigma_2\sigma_3^{-1},
\]
respectively. These are both transverse representatives of the mirror
of the knot $7_6$, with self-linking number $-1$. The following result demonstrates that the filtered DGA is an effective invariant of transverse knots.

\begin{thm}
The filtered DGAs for $K_1$ and $K_2$ are not filtered stable tame isomorphic, and thus $K_1$ and $K_2$ are not transversely isotopic.
\end{thm}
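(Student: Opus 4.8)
The second assertion is immediate from Theorem~\ref{thm:filterinv}: a transverse isotopy from $K_1$ to $K_2$ would produce a filtered stable tame isomorphism of their DGAs. So the content is the first assertion, which I would establish by extracting a computable invariant of the filtered stable tame isomorphism type and checking that it takes different values on $K_1$ and $K_2$.

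First I would use Theorem~\ref{thm:mainlinktv} to write the two filtered DGAs $(\A_4,\pa^-_1)$ and $(\A_4,\pa^-_2)$ explicitly from the given braid words: compute $\phi_B$ as the appropriate composition of the $\phi_{\sigma_k}^{\pm1}$, read off $\Phi^L_B$ and $\Phi^R_B$ from the action of $\phi'_B$ on the $a_{i0}$ and $a_{0j}$, assemble $\Ll$ from the writhe and leading strand, and form the matrices $\Aa^U,\Aa^V,\Bb^U,\Bb^V$. It is worth recording at the outset that setting $U=V=1$ recovers the ordinary knot contact homology DGAs, which are topological invariants; since $K_1$ and $K_2$ are both transverse representatives of $m(7_6)$ with self-linking number $-1$ and (as indicated in the introduction and Section~\ref{sec:HF}) the same previously known, in particular Heegaard Floer, invariants, those ordinary DGAs are stable tame isomorphic and cannot distinguish the two. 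Hence any distinction must be extracted genuinely from the $U$- and $V$-filtrations.

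Next I would pass to a finite, checkable invariant built from the filtered DGA. Fix a field $k$ and values $\lambda_0,\mu_0\in k^\times$, and consider augmentations $\epsilon\colon\A_4\to k[U,V]$ that kill all generators of nonzero degree and send each $a_{ij}$ to an element of $k$, so that $\epsilon\circ\pa^-=0$; these correspond to $k$-points of the variety cut out by $\Ll^{-1}\Aa\Ll=\Phi^L_B\Aa\Phi^R_B$ together with the $\pa^-\mathbf{C}$- and $\pa^-\mathbf{E}$-equations. For each such $\epsilon$, twist $\pa^-$ by $\epsilon$ and linearize to obtain a complex of free $k[U,V]$-modules; because the isomorphisms in Theorem~\ref{thm:filterinv} are filtered, the induced homology together with its bigrading by Maslov degree and by $U$- and $V$-filtration level (equivalently, the associated Poincar\'e series $P_\epsilon(t,U,V)$) is an invariant of the filtered stable tame isomorphism type, and so is the multiset of these data over all augmentations. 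I would then exhibit an explicit choice of $k$ and $\lambda_0,\mu_0$ (for instance a small finite field, or $\Q$) for which this multiset --- or already the cruder refinement of the augmentation count that remembers the $(U,V)$-weights --- differs for $K_1$ and $K_2$; given the size of the algebra this final comparison is carried out with a computer algebra system.

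The main obstacle is twofold. On the conceptual side, one must choose an invariant delicate enough to see the filtration: any specialization of the form $U=V$ (in particular $(U,V)=(1,1)$) collapses the filtered DGA to a topological invariant and detects nothing, so the computation must genuinely exploit that $\pa^-$ can raise the $U$- and $V$-exponents; moreover one must check that the bigraded linearized homology above is preserved by \emph{filtered} stable tame isomorphism, which reduces to verifying that the stabilizations and tame automorphisms produced in the proof of Theorem~\ref{thm:filterinv} respect the $(U,V)$-filtration --- a formal but necessary point. On the computational side, these DGAs involve on the order of $4^2$ generators in each of several degrees with fairly intricate differentials, so reducing the question to a finite list of polynomial identities over a fixed field and then checking that the resulting invariants disagree is the part demanding care in bookkeeping rather than new ideas.
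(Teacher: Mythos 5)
Your reduction of the second assertion to Theorem~\ref{thm:filterinv} and your general strategy (compute both DGAs from Theorem~\ref{thm:mainlinktv}, observe that the $(U,V)=(1,1)$ specialization is topological and therefore blind here, and extract an augmentation-type invariant of the filtered stable tame isomorphism class to be compared by computer) are the right ones and match the paper in spirit. However, the specific invariant you propose does not work. You ask for $\epsilon\colon\A_4\to k[U,V]$ with $\epsilon(a_{ij})\in k$ and $\epsilon\circ\pa^-=0$; since $U$ remains a formal variable in the target, the condition $\epsilon(\pa^-\Cc)=0$ forces the coefficient of $U$ in each entry of $\epsilon(\Aa^U\cdot\Phi^R_B)$ to vanish separately. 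The $U$-linear part of $\Aa^U$ is upper triangular and unipotent, so this forces $\epsilon(\Phi^R_B)=0$, which is impossible because $\Phi^R_B$ is invertible (concretely: the last row of $\Aa^U$ has $U$-part $(0,\dots,0,1)$, so already the last row of $\epsilon(\Phi^R_B)$ would have to vanish). Hence your augmentation variety is empty for \emph{every} braid, and the multiset you build from it cannot distinguish anything. The correct order of operations is to specialize $U$ and $V$ to elements of the ground ring \emph{first} and only then solve the augmentation equations; compatibility with filtered stable tame isomorphism is then immediate, since such an isomorphism is an $R[U,V]$-algebra map and so descends to any specialization --- your worry on this point needs no analysis of the proof of Theorem~\ref{thm:filterinv}.

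Beyond that, the argument is not completed: for a theorem whose entire content is that two explicit DGAs are non-isomorphic, one must either commit to a specific invariant and report the differing values or carry the computation through, and ``exhibit an explicit choice for which this multiset differs'' is a promissory note rather than a proof. The paper's actual computation is also much simpler than your bigraded linearized homology over $k[U,V]$: set $(U,V)=(0,1)$ to obtain the hat DGAs $(\widehat{\KCA}(K_i),\widehat{\pa})$ over $\Z[\lambda^{\pm1},\mu^{\pm1}]$, and count DGA maps to $(\Z/3,0)$ sending $\lambda\mapsto-1$ and $\mu\mapsto+1$; the counts are $5$ for $K_1$ and $0$ for $K_2$, which finishes the argument.
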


\begin{proof}
Consider the DGAs $(\widehat{\KCA}(K_i),\widehat{\pa})$ over
$\Z[\lambda^{\pm 1},\mu^{\pm 1}]$ obtained from the filtered DGAs by
setting $(U,V)=(0,1)$. One can count by computer the number of DGA
maps (augmentations) from $(\widehat{\KCA}(K_i),\widehat{\pa})$ to
$(\Z/3,0)$ that send $\lambda$ to $-1$ and $\mu$ to $+1$; there are $5$ for $K_1$ and $0$ for $K_2$. The result
follows. (See \cite{Ng10} for more details.)
\end{proof}

As noted in \cite{Ng10}, the hat version of knot Floer homology for $m(7_6)$ is $0$ in
the relevant bidegree $(0,0)$, and so the transverse invariants
in knot Floer homology
\cite{LiscaOzsvathStipsiczSzabo09,OzsvathSzaboThurston08} do not
distinguish $K_1$ and $K_2$. One similarly finds that other previously developed transverse invariants (in Khovanov or Khovanov--Rozansky homology, for instance) do not distinguish $K_1$ and $K_2$. We conclude that the transverse invariant from knot contact homology is independent of previously known transverse invariants.

\def\cprime{$'$} \def\cprime{$'$}

\end{document}